\theoremstyle{definition}
\newtheorem{definition}{Definition}
\newtheorem{remark}[definition]{Remark}
\theoremstyle{plain}
\newtheorem{lemma}[definition]{Lemma}
\newtheorem{proposition}[definition]{Proposition}
\newtheorem{theorem}[definition]{Theorem}
\newtheorem{corollary}[definition]{Corollary}
\newcommand\A{{\mathbf A}}
\newcommand\B{{\mathbf B}}
\newcommand\C{{\mathbf C}}
\newcommand\I{{\mathbf I}}
\newcommand\WK{{\mathbf{WK}}}
\bmdefine{\boldstar}{\mathchoice{\textstyle*}{\textstyle*}{\textstyle*}{\scriptstyle*}}
\newcommand{\Jzero}{J_{_0}}
\newcommand{\Juno}{J_{_1}}
\newcommand{\Jdue}{J_{_2}}
\newcommand{\Ji}{J_{_k}}
\newcommand\Alg[1]{\if#1*\operatorname{\mathsf{Alg}*}\else\operatorname{\mathsf{Alg}}#1\fi}
\newcommand\Mod[1]{\if#1*\operatorname{\mathsf{Mod}*}\else\operatorname{\mathsf{Mod}}#1\fi}
\bmdefine{\Leibniz}{\Omega}        
\bmdefine{\frege}{\Lambda}         
\newcommand{\tarskidsp}{\mathord%
   {\m@th\raisebox{0pt}[0pt][0pt]{$\stackrel%
   {\raisebox{-2.7pt}[0ex][0pt]{$\displaystyle \,\?\thicksim$}}%
   {\displaystyle\Leibniz}$}}}
\newcommand{\tarskitxt}{\mathord%
   {\m@th\raisebox{0pt}[0pt][0pt]{$\stackrel%
   {\raisebox{-2.7pt}[0ex][0pt]{$\,\?\thicksim$}}{\displaystyle\Leibniz}$}}}
\newcommand{\tarskiscr}{\mathord%
   {{\m@th\raisebox{0pt}[0pt][0pt]{$\stackrel%
   {\raisebox{-2.4pt}[0ex][0pt]{$\scriptstyle \,\?\thicksim$}}%
   {\scriptstyle\Leibniz}$}}}}
\newcommand{\tarskiscrscr}{\mathord%
   {{\m@th\raisebox{0pt}[0pt][0pt]{$\stackrel%
   {\raisebox{-2pt}[0ex][0pt]{$\scriptscriptstyle \,\?\thicksim$}}%
   {\scriptscriptstyle\Leibniz}$}}}}
\newcommand{\Tarski}{\@ifnextchar ^ %
   {\mathchoice{\tarskidsp\kern-.07em}{\tarskitxt\kern-.07em}%
   {\tarskiscr\kern-.07em}{\tarskiscrscr\kern-.07em}}%
   {\mathchoice{\tarskidsp}{\tarskitxt}{\tarskiscr}{\tarskiscrscr}}}
\DeclareMathAlphabet{\mathbfsf}{\encodingdefault}{\sfdefault}{bx}n
\providecommand*{\Dashv}{\mathrel{\mathpalette\@Dashv\vDash}}
\newcommand*{\@Dashv}[2]{\reflectbox{$\m@th#1#2$}}
\newcommand\pair[1]{{\langle#1\rangle}}
\newcommand\PL{{\mathcal{P}}_{\text{\l}}}
\newcommand\ant{\nicefrac12}
\newcommand{\bit}{\begin{itemize}}    
\newcommand{\eit}{\end{itemize}}
\newcommand{\ben}{\begin{enumerate}}
\newcommand{\een}{\end{enumerate}}
\newcommand{\bde}{\begin{description}}
\newcommand{\ede}{\end{description}}
\newcommand{\?}{\ensuremath{\mkern0.4\thinmuskip}}   
\newcommand\WKt{\WK^{\mathrm{e}}}
\newcommand{\alg}{\mathbf}
\newcommand{\class}{\mathsf}
\newcommand{\logic}{\textsc}
\begin{document}

\title{Bochvar algebras: A categorical equivalence and the generated variety}
\date{\today}

\author{Stefano Bonzio$^1$, Francesco Paoli$^2$ and Michele Pra Baldi$^3$}

\address{$^1$Department of Mathematics and Computer Science, University of Cagliari}

\address{$^2$Department of Pedagogy, Psychology and Philosophy, University of Cagliari}

\address{$^3$ FISPA Department, University of Padova}


\begin{abstract}
The proper quasivariety $\class{BCA}$ of Bochvar algebras, which serves as the equivalent algebraic semantics of Bochvar's external logic, was introduced by Finn and Grigolia in \cite{FinnGrigolia} and extensively studied in \cite{SMikBochvar}. In this paper, we show that the algebraic category of Bochvar algebras is equivalent to a category whose objects are pairs consisting of a Boolean algebra and a meet-subsemilattice (with unit) of the same. Furthermore, we provide an axiomatisation of the variety $V(\class{BCA})$ generated by Bochvar algebras. Finally, we axiomatise the join of Boolean algebras and semilattices within the lattice of subvarieties of $V(\class{BCA})$.
\end{abstract}

\keywords{Bochvar algebra, Kleene logic, P\l onka sum, algebraic logic}
\subjclass[2020]{03G25, 03B60.}
\maketitle

\section{Introduction}

In 1938, the Russian mathematician Dmitri Anatolyevich Bochvar published the influential paper \textquotedblleft On a three-valued logical calculus and its application to the analysis of the paradoxes of the classical extended functional calculus\textquotedblright \cite{Bochvar38}, in which he introduced a $3$-valued logic aimed at resolving set-theoretic and semantic paradoxes. His proposal diverged significantly from other related approaches in at least two key aspects. First and foremost, the third, non-classical value $\ant$ was \emph{infectious} in any sentential compound involving the standard, or \emph{internal}, propositional connectives $\lnot, \land, \lor$. This means that a formula would be assigned the value $\ant$ iff at least one variable within it was assigned $\ant$. This third value was interpreted as \textquotedblleft paradoxical\textquotedblright. Second, the language of Bochvar's logic included \emph{external} unary connectives $J_0,J_1,J_2$ (written in Finn and Grigolia's notation) that, unlike the internal connectives, could output only Boolean values.

Although the merits of Bochvar's logic as a solution to the paradoxes remain highly debatable\footnote{More promising applications of Bochvar's logic concern e.g. the treatment of fatal errors in computer science \cite{BergstraPonse1998, ferguson2017meaning} and the analysis of topicality in the philosophy of language \cite{beall2016off}.}, its influence on successive developments in $3$-valued logic has been significant. The internal fragment of Bochvar's logic was characterised by Urquhart \cite{Urquhart2001} through the imposition of a variable inclusion strainer on the consequence relation of classical propositional logic. Building on this result, a general framework for \emph{right variable inclusion logics} has been proposed (see \cite{Bonziobook} for a detailed account). In this context, the celebrated algebraic construction of \emph{P\l onka sums} is extended from algebras to logical matrices. Specifically, each logic  $\logic{L}$ is paired with a \textquotedblleft right variable inclusion companion\textquotedblright~$\logic{L}^r$ whose matrix models are decomposed as P\l onka sums of models of $\logic{L}$. Notably, Bochvar's internal logic serves as the right variable inclusion companion of classical logic.

Studies on Bochvar's external logic, by contrast, are comparatively scarce. Finn and Grigolia \cite{FinnGrigolia} provided an algebraic semantics for it with respect to the quasivariety of \emph{Bochvar algebras}. However, their work does not employ the standard toolbox or terminology of abstract algebraic logic. Adopting a more mainstream approach, the papers \cite{Ignoranzasevera, SMikBochvar} extend Finn and Grigolia's completeness theorem to a full-fledged algebraisability result, and offers a representation of Bochvar algebras that refines the P\l onka sum representations of their involutive bisemilattice reducts.\footnote{Bochvar's $3$-valued algebra was employed by Halld\'en \cite{Hallden} to introduce another logic that incorporates both internal and external connectives; crucially, however, Halld\'en -- unlike Bochvar -- treated $\ant$ as a designated value. The internal fragment of Halld\'en's logic has been thoroughly investigated in recent years under the name \emph{Paraconsistent Weak Kleene Logic} (see \cite[Chapter 7]{Bonziobook} for a survey). By contrast, the external logic has received relatively little attention in this case as well (exceptions include \cite{Segerberg65, BonzioZamperlin24}). It is important to observe that the quasivariety of Bochvar algebras algebraises not only Bochvar's, but also Halld\'en's external logic. Finally, it is worth mentioning that S.C. Kleene independently introduced the internal fragments of both logics in his \cite{KleeneBook}.}

In this paper, after covering the necessary preliminaries in Section \ref{prello}, we further refine the representation result in \cite{SMikBochvar}, proving that Bochvar algebras are categorically equivalent to \emph{Bochvar systems} -- pairs consisting of a Boolean algebra and a meet-subsemilattice with unit of such (Section \ref{prelly}). In Section \ref{prellu}, we axiomatise both the variety generated by Bochvar algebras, and its proper subvariety corresponding to the varietal join of Boolean algebras and semilattices (in the appropriate similarity type).

\section{Preliminaries}\label{prello}

We assume that the reader has some familiarity with universal algebra and  algebraic logic (standard references are \cite{BuSa00} and \cite{FontBook}, respectively). In what follows, given a class of similar algebras $\class{K}$, the usual class-operator symbols $I(\class{K}), S(\class{K}),H(\class{K}),P(\class{K}),P_{u}(\class{K})$ respectively denote the closure of $\class{K}$ under isomorphic copies, subalgebras, homomorphic images, products and ultraproducts. A class of similar algebras $	\class{K}$ is a quasivariety if $\class{K}=ISPP_{u}(K)$. When $K$ is a finite set of finite algebras, it holds that $ISP(\class{K})=ISPP_{u}(K)$. When $\class{K}$ is also closed under homomorphic images or, equivalently, if $\class{K}=HSP(\class{K})$, it is said to be a variety. $ V(\class{K})$ is shorthand for $HSP(\class{K})$.  Given a quasivariety $\class{Q}$, by  $\mathsf{Q}_{RSI}$ we indicate the class of relatively subdirectly irreducible members of $\mathsf{Q}$. If $\class{Q}$ is a variety, we simply write $\class{Q}_{SI}$. 

In this paper we will mainly deal with the algebraic language $\langle \land,\lor,\neg, \Jdue,0,1\rangle$, namely the expansion of the language of Boolean algebras via a unary operation symbol $\Jdue$. This is the language of Bochvar's external logic ($\logic{B}_{e}$) and external paraconsistent weak Kleene logic, the three-valued logics defined over the algebra $\alg{WK}^e$ displayed in Figure \ref{fig:WKe} by taking $\{1\}$ and $\{1,\ant\}$ as designated values, respectively.
\begin{figure}[h]
\begin{center}\renewcommand{\arraystretch}{1.25}
\begin{tabular}{>{$}c<{$}|>{$}c<{$}}
   & \lnot  \\[.2ex]
\hline
  1 & 0 \\
  \ant & \ant \\
  0 & 1 \\
\end{tabular}
\qquad
\begin{tabular}{>{$}c<{$}|>{$}c<{$}>{$}c<{$}>{$}c<{$}}
   \lor & 0 & \ant & 1 \\[.2ex]
 \hline
       0 & 0 & \ant & 1 \\
       \ant & \ant & \ant & \ant \\          
       1 & 1 & \ant & 1
\end{tabular}
\qquad
\begin{tabular}{>{$}c<{$}|>{$}c<{$}>{$}c<{$}>{$}c<{$}}
   \land & 0 & \ant & 1 \\[.2ex]
 \hline
     0 & 0 & \ant & 0 \\
     \ant & \ant & \ant & \ant \\          
    1 & 0 & \ant & 1
\end{tabular}

\vspace{10pt}
\begin{tabular}{>{$}c<{$}|>{$}c<{$}}
   & J_{_2} \\[.2ex]
\hline
  1 & 1 \\
  \ant & 0 \\
  0 & 0 \\
\end{tabular}

\end{center}
\caption{The algebra $\WKt$.}\label{fig:WKe}
\end{figure}

The quasivariety generated by $\alg{WK}^e$, that is, $ISP(\alg{WK}^e)$ is the equivalent algebraic semantics of the logics $\logic{B}^e$ and $\logic{PWK}^e$ (see \cite[Theorem 35]{Ignoranzasevera} and \cite[Theorem 7.1]{BonzioZamperlin24})
and is called the \emph{quasivariety of Bochvar algebras} -- $\class{BCA}$, in brief.
Two additional unary operations can be defined on $\alg{WK}^e$ from the operations $\Jdue$ and $\neg$, namely
$\Jzero a\coloneqq \Jdue \neg a$; $\Juno a\coloneqq \neg(\Jdue a\vee\Jdue\neg a)$. Those operations were taken as primitive in Finn and Grigolia's \cite[pp. 233-234]{FinnGrigolia}, where one finds the following quasi-equational basis for the quasivariety $\class{BCA}$.
\begin{definition}\label{def: algebre di Bochvar}
The quasivariety of Bochvar algebras can be axiomatised through the following identities and quasi-identities: 
\begin{enumerate}
\item $\varphi\vee \varphi\thickapprox \varphi$; \label{BCA:1}
\item$\varphi\lor \psi \thickapprox \psi \lor \varphi$; \label{BCA:2}
\item $(\varphi\lor \psi)\lor \delta\thickapprox  \varphi\lor(\psi\lor \delta)$; \label{BCA:3}
\item $\varphi\land(\psi\lor \delta)\thickapprox(\varphi\land \psi)\lor(\varphi\land \delta)$; \label{BCA:4}
\item $\neg(\neg \varphi)\thickapprox \varphi$; \label{BCA:5}
\item $\neg 1\thickapprox 0$; \label{BCA:6}
\item $\neg( \varphi\lor \psi)\thickapprox \neg \varphi\land\neg \psi$; \label{BCA:7}
\item $0\vee \varphi\thickapprox \varphi$; \label{BCA:8}

\item $\Jdue\Ji \varphi\thickapprox\Ji \varphi$, for every $k\in\{0,1,2\}$; \label{BCA:9}

\item $\Jzero\Ji \varphi\thickapprox \neg\Ji \varphi$, for every $k\in\{0,1,2\}$; \label{BCA:10}
\item $J_{_1}\Ji \varphi \thickapprox 0$, for every $k\in\{0,1,2\}$; \label{BCA:11}
\item $J_{_k}\neg \varphi\thickapprox J_{_{2-k}}\varphi$, for every $k\in\{0,1,2\}$; \label{BCA:12}
\item $J_{_i}\varphi \thickapprox\neg(J_{_j}\varphi\vee J_{_k}\varphi)$, for $i\neq j\neq k\neq i$; \label{BCA:13}

\item $J_{_k}\varphi\vee\neg J_{_k}\varphi \thickapprox 1$, for every $k\in\{0,1,2\}$; \label{BCA:14}

\item $(J_{_i}\varphi\vee J_{_k}\varphi)\land J_{_i}\varphi\thickapprox J_{_i}\varphi$, for $i,k\in\{0,1,2\}$; \label{BCA:15}
\item $ \varphi\lor J_{_k}\varphi \thickapprox \varphi$, for $k\in\{1,2\}$; \label{BCA:16}
\item $J_{_0}(\varphi\lor \psi)\thickapprox J_{_0}\varphi\land J_{_0}\psi$; \label{BCA:17}
\item $J_{_2}(\varphi\lor \psi)\thickapprox ( J_{_2}\varphi\land J_{_2}\psi)\vee( J_{_2}\varphi\land J_{_2}\neg \psi)\vee ( J_{_2}\neg \varphi\land J_{_2}\psi) $; \label{BCA:18}
\item $\Jzero \varphi \thickapprox \Jzero \psi \;\&\; \Juno \varphi \thickapprox \Juno \psi  \;\&\; \Jdue \varphi \thickapprox \Jdue \psi \;\Rightarrow\; \varphi \thickapprox \psi$. \label{BCA:quasi} 
\end{enumerate}
\end{definition}

The semantic properties of Bochvar algebras have been investigated in \cite{SMikBochvar}, where appropriate representation theorems are  provided and the following simplified axiomatisation for $\class{BCA}$ is offered:

\begin{theorem}\cite[Thm. 3.3]{SMikBochvar}\label{th: algebre di Bochvar2}
The following is a quasi-equational basis for $\class{BCA}$. 
\begin{enumerate}
\item $\varphi\vee \varphi\thickapprox \varphi$; \label{2BCA:1}
\item$\varphi\lor \psi \thickapprox \psi \lor \varphi$; \label{2BCA:2}
\item $(\varphi\lor \psi)\lor \delta\thickapprox  \varphi\lor(\psi\lor \delta)$; \label{2BCA:3}
\item $\varphi\land(\psi\lor \delta)\thickapprox(\varphi\land \psi)\lor(\varphi\land \delta)$; \label{2BCA:4}
\item $\neg(\neg \varphi)\thickapprox \varphi$; \label{2BCA:5}
\item $\neg 1\thickapprox 0$; \label{2BCA:6}
\item $\neg( \varphi\lor \psi)\thickapprox \neg \varphi\land\neg \psi$; \label{2BCA:7}
\item $0\vee \varphi\thickapprox \varphi$; \label{2BCA:8}
\item $\Jzero\Jdue \varphi\thickapprox \neg\Jdue \varphi$; \label{2BCA:10}
\item $\Jdue\varphi \thickapprox\neg(\Jzero\varphi\vee \Juno\varphi)$; \label{2BCA:13}
\item $\Jdue\varphi\vee\neg \Jdue\varphi \thickapprox 1$; \label{2BCA:14}
\item $J_{_2}(\varphi\lor \psi)\thickapprox ( J_{_2}\varphi\land J_{_2}\psi)\vee( J_{_2}\varphi\land J_{_2}\neg \psi)\vee ( J_{_2}\neg \varphi\land J_{_2}\psi) $; \label{2BCA:18}
\item $\Jzero \varphi \thickapprox \Jzero \psi \;\&\; \Jdue \varphi \thickapprox \Jdue \psi \;\Rightarrow\; \varphi \thickapprox \psi$.\label{2BCA:quasi}
\end{enumerate}
\end{theorem}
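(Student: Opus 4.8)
The plan is to prove the statement by showing that the quasi-equations \eqref{2BCA:1}--\eqref{2BCA:quasi} present the same quasivariety as the Finn--Grigolia basis of Definition~\ref{def: algebre di Bochvar}, which the excerpt allows us to take as a presentation of $\class{BCA}$. (Throughout, $\Jzero\varphi$ and $\Juno\varphi$ abbreviate $\Jdue\neg\varphi$ and $\neg(\Jdue\varphi\vee\Jdue\neg\varphi)$.) One inclusion is immediate by inspection: \eqref{2BCA:1}--\eqref{2BCA:8} are \eqref{BCA:1}--\eqref{BCA:8}; \eqref{2BCA:10} is the $k=2$ case of \eqref{BCA:10}; \eqref{2BCA:13} is the $\langle i,j,k\rangle=\langle2,0,1\rangle$ case of \eqref{BCA:13}; \eqref{2BCA:14} is the $k=2$ case of \eqref{BCA:14}; \eqref{2BCA:18} is \eqref{BCA:18}; and since $\Juno\varphi\approx\neg(\Jdue\varphi\vee\Jzero\varphi)$ holds in every Bochvar algebra, the premises of \eqref{2BCA:quasi} already entail $\Juno\varphi\approx\Juno\psi$, so \eqref{BCA:quasi} delivers \eqref{2BCA:quasi}. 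Hence $\class{BCA}$ satisfies all of \eqref{2BCA:1}--\eqref{2BCA:quasi}.

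The substantive direction is to show that every algebra $\A$ satisfying \eqref{2BCA:1}--\eqref{2BCA:quasi} lies in $\class{BCA}$. First one records the consequences of the involutive bisemilattice axioms \eqref{2BCA:1}--\eqref{2BCA:8} (dual De~Morgan, idempotence/commutativity/associativity of $\wedge$, distributivity of $\vee$ over $\wedge$, $1\wedge\varphi\approx\varphi$, $\varphi\vee(\varphi\wedge\neg\varphi)\approx\varphi$) together with auxiliary identities such as $\Jdue\varphi\wedge\neg\Jdue\varphi\approx0$, $\Jdue\varphi\wedge0\approx0$, $\Jdue\varphi\vee1\approx1$, $\Jdue\varphi\wedge\Jdue\neg\varphi\approx0$ coming from \eqref{2BCA:13}--\eqref{2BCA:14}. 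The heart of the proof is then a package of \emph{normalisation lemmas} for the external operations: the evaluations $\Jdue0\approx\Jzero1\approx0$, $\Jdue1\approx\Jzero0\approx1$, $\Juno0\approx\Juno1\approx0$; the idempotence and projection laws $\Jdue\Ji\varphi\approx\Ji\varphi$, $\Jzero\Ji\varphi\approx\neg\Ji\varphi$, $\Juno\Ji\varphi\approx0$ for $k\in\{0,1,2\}$; the partition law $\Jzero\varphi\vee\Juno\varphi\vee\Jdue\varphi\approx1$ with $\Ji\varphi\wedge J_{_l}\varphi\approx0$ for $k\ne l$; and the distribution laws $\Jdue(\varphi\wedge\psi)\approx\Jdue\varphi\wedge\Jdue\psi$ and $\Jzero(\varphi\vee\psi)\approx\Jzero\varphi\wedge\Jzero\psi$. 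The equational axioms \eqref{2BCA:10}--\eqref{2BCA:18} only pin these down partially -- for instance they yield $\Jdue\Jdue\varphi\approx\Jdue\varphi\wedge\neg\Juno\Jdue\varphi$, not $\Jdue\Jdue\varphi\approx\Jdue\varphi$ -- and the gaps are closed by repeated appeals to the quasi-identity \eqref{2BCA:quasi}: after fixing the values on $0,1$, one proves $\Juno\Jdue\varphi\approx0$ by computing $\Jzero\Juno\Jdue\varphi$ and $\Jdue\Juno\Jdue\varphi$ and matching them with $\Jzero0$ and $\Jdue0$, then reads off $\Jdue\Jdue\varphi\approx\Jdue\varphi$, and proceeds similarly. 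With the normalisation lemmas in force, $B\coloneqq\{\Jdue a:a\in\A\}$ is the universe of a subalgebra $\B$ of $\A$ on which $\Jdue$, $\Jzero$, $\Juno$ reduce to the identity, to $\neg$, and to the constant $0$, and on which $\varphi\vee\neg\varphi\approx1$ and $\varphi\wedge\neg\varphi\approx0$; since in a distributive involutive bisemilattice these last two identities collapse the two semilattice orders into one, $\B$ is a Boolean algebra. The remaining Finn--Grigolia items \eqref{BCA:9}--\eqref{BCA:18} are now either instances of the normalisation lemmas or consequences of \eqref{2BCA:18} and of Boolean computations in $\B$, with \eqref{BCA:16} requiring one further use of \eqref{2BCA:quasi} (check that $\varphi\vee\Ji\varphi$ and $\varphi$ have equal $\Jzero$- and $\Jdue$-values), while \eqref{BCA:quasi} follows from \eqref{2BCA:quasi} because $\Juno$ is term-definable from $\Jdue,\Jzero$; thus $\A\in\class{BCA}$. (Equivalently, the normalisation lemmas let one embed $\A$ into a power of $\WKt$: for distinct $a,b\in\A$ pick, via \eqref{2BCA:quasi}, a $\Jzero$- or $\Jdue$-value separating them in $\B$, choose a Boolean homomorphism $\B\to\two$ realising the separation, and extend it to a homomorphism $\A\to\WKt$ by sending $x$ to $1$, $0$, or $\ant$ according as this homomorphism takes $\Jdue x$, $\Jzero x$, or $\Juno x$ to $1$; this gives $\A\in ISP(\WKt)=\class{BCA}$.)

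The main obstacle is the package of normalisation lemmas. Because $\class{BCA}$ is a proper quasivariety, the equational fragments of the two bases are \emph{not} equivalent, so identities as basic as $\Jdue\Jdue\varphi\approx\Jdue\varphi$ cannot be obtained from \eqref{2BCA:1}--\eqref{2BCA:18} by purely equational reasoning; they must be bootstrapped through the quasi-identity \eqref{2BCA:quasi}, and getting that bootstrap off the ground requires first computing $\Jdue$, $\Jzero$, $\Juno$ on the constants $0$ and $1$. Once this web of lemmas is in place, the rest of the argument is routine bookkeeping inside the Boolean algebra $\B$ and its involutive-bisemilattice ambient.
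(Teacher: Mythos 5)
A preliminary remark: the paper does not prove this statement at all — it is imported from \cite[Thm.~3.3]{SMikBochvar} — so your proposal has to stand on its own. Its overall shape is reasonable: the easy direction (every Bochvar algebra satisfies the reduced basis) is correct, and deriving the Finn--Grigolia axioms of Definition \ref{def: algebre di Bochvar} from items (1)--(13), working inside the P\l onka representation supplied by (1)--(8), is the right plan for the hard direction. The gap is that the entire content of that direction is your ``package of normalisation lemmas'', and the engine you propose for proving them cannot work as described. Your bootstrap is: establish $\alpha\approx\beta$ by matching $\Jzero$- and $\Jdue$-values and invoking the quasi-identity. But by item (11) every $\Jdue$-image lies in the bottom (Boolean) fibre and, once one knows $\Jdue\Jdue\varphi\approx\Jdue\varphi$, both sides of almost every lemma in your package ($\Juno\Jdue\varphi\approx 0$, the $\Ji J_{_l}$-laws, the partition and distribution laws) are fixed points of $\Jdue$; for such elements the premise $\Jdue\alpha\approx\Jdue\beta$ of the quasi-identity \emph{is} the conclusion $\alpha\approx\beta$, so the quasi-identity is vacuous exactly where you need it. Concretely, your proposed proof of $\Juno\Jdue\varphi\approx 0$ requires computing $\Jdue(\Juno\Jdue\varphi)=\Jzero(\Jdue\Jdue\varphi\vee\neg\Jdue\varphi)$, i.e.\ a $\Jzero$-of-join (equivalently $\Jdue$-of-meet) law -- which is itself one of the unproven lemmas; and the same missing law is what would make the quasi-identity applicable to Finn--Grigolia's $\varphi\vee\Jdue\varphi\approx\varphi$, since its premise $\Jzero(\varphi\vee\Jdue\varphi)\approx\Jzero\varphi$ is again a $\Jzero$-of-join computation. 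So the circle is never closed.

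Two of your side claims are also off, in ways that matter for how the proof must actually go. First, $\Jdue\Jdue\varphi\approx\Jdue\varphi$ (together with $\Jdue 0\approx 0$, $\Jdue 1\approx 1$) \emph{is} derivable from the equational items alone: by (11) all $\Jdue$-images lie in the Boolean bottom fibre; item (10) plus De Morgan and distributivity give $\Jdue\varphi\approx\Jdue\varphi\wedge\neg\Jdue\neg\varphi$, which with (9) yields $\Jdue\Jdue\varphi\leq\Jdue\varphi$ there; instantiating (12) at $\langle 1,0\rangle$ gives $\Jdue 0\approx 0$, then (9) at $0$ gives $\Jdue 1\approx 1$, and (12) at $\langle\Jdue\varphi,\neg\Jdue\varphi\rangle$ gives $\Jdue\Jdue\varphi\vee\neg\Jdue\varphi\approx 1$, i.e.\ the converse inequality -- no quasi-identity needed. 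Second, where the quasi-identity genuinely is indispensable (e.g.\ $\varphi\vee\Jdue\varphi\approx\varphi$ fails in the model obtained from any Boolean algebra with $\Jdue$ an idempotent non-identity endomorphism, which satisfies (1)--(12)), your route to it presupposes the missing distribution laws, as explained above. Until those distribution/partition lemmas are actually derived (equationally, or by a genuinely non-circular use of the quasi-identity), the hard direction is not established; the parenthetical alternative of embedding into a power of $\WKt$ has the same gap, because verifying that the proposed map preserves $\wedge$ requires exactly the same information about how $\Jdue$ interacts with meets.
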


In the available representation theorems for $\class{BCA}$, the  key tool is a well-known algebraic construction introduced by Jerzy P\l onka in \cite{Plo67,Plo67a, Plonka1984nullary} (see also \cite{RomanowskaPlonka92, Bonziobook}) and later dubbed \emph{P\l onka sum}. A \emph{semilattice direct system} consists of a family of similar\footnote{The similarity type of such algebras must contain at least an operation symbol of arity $2$ or greater. This constraint will be kept implicit in what follows.} algebras $\{\A_{i}\}_{i\in I}$ with pairwise disjoint universes, such that the index set $I$ is the universe of a lower-bounded semilattice $\langle I, \vee, i_{0} \rangle$ with induced partial order $\leq$. Moreover, whenever $i\leq j$, for $i,j \in I$, we have a homomorphism $p_{ij}$ from the algebra $\A_i$ to the algebra $\A_j$. Such homomorphisms satisfy a \emph{compatibility property}: $p_{ii}$ is the identity, for every $i\in I$, and $p_{jk}\circ p_{ij} = p_{ik}$, for every $i\leq j\leq k$. The algebras in $\{\A_{i}\}_{i\in I}$ are called the \emph{fibres} of the system.

Given a semilattice direct system of algebras 
\[
\mathbb{A} = \pair{\{\A_i\}_{i\in I}, \mathbf{I} = \langle I, \vee, i_{0} \rangle, \{ p_{ij} : i \leq_\alg{I} j \}},
\] 
the \emph{P\l onka sum} over it is the algebra $\A= \PL(\mathbb{A}) = \PL(\A_{i})_{i\in I}$ (of the same similarity type as the algebras in $\{\A_i\}_{i\in I}$) whose universe is the disjoint union $A=\displaystyle\bigsqcup_{i\in I} A_i $ and where a generic $n$-ary term operation $g^\A$ is computed as follows: 
\begin{equation}\label{eq: operazioni}
g^{\A}(a_{1},\dots, a_n)\coloneqq g^{\A_k}(p_{i_{1}k}(a_1),\dots p_{i_{n}k}(a_n)),
\end{equation}
where $k= i_{1}\vee\dots\vee i_{n}$ and $a_1\in A_{i_1},\dots, a_{n}\in A_{i_n}$. If the similarity type contains any constant operation $e$, then $e^{\A} = e^{\A_{i_0}}$.  The \emph{fibres} of a P\l onka sum are the fibres of its underlying semilattice direct system. A fibre $\alg{A}_{i}$  is \emph{trivial} if its universe is a singleton. We will also say that a \emph{fixpoint} is the universe of a trivial fibre. Notice that if $a$ is a fixpoint, then $a=g( a)$ for every unary operation $g$.

For our purposes, the crucial application of P\l onka sums is with respect to semilattice direct systems of Boolean algebras. The resulting algebras have a remarkable feature: they satisfy all and only the valid Boolean identities $\varphi\thickapprox\psi$ that are \emph{regular}, i.e., such that the same variables appear in $\varphi$ and $\psi$ (this is a consequence of properties of the P\l onka sum construction, see e.g. \cite[Theorem 2.3.2]{Bonziobook}). 

The class of algebras obtained as  P\l onka sums over a semilattice direct system of Boolean algebras is a variety $\class{IBSL}$, whose members are known as \emph{involutive bisemilattices} \cite[Chapter 2]{Bonziobook}. Equivalently, this variety can be defined as follows \cite{Bonzio16SL}:

\begin{definition}\label{def: IBSL}
An \emph{involutive bisemilattice} is an algebra $\mathbf{B} = \pair{B,\land,\lor,\lnot,0,1}$ of type $\langle 2,2,1,0,0 \rangle$ satisfying:
\begin{enumerate}[label=\textbf{I\arabic*}.]
\item $\varphi\lor \varphi\thickapprox \varphi$;
\item $\varphi\lor\psi\thickapprox \psi\lor \varphi$;
\item $\varphi\lor(\psi\lor \delta)\thickapprox(\varphi\lor \psi)\lor \delta$;
\item $\lnot\lnot \varphi\thickapprox \varphi$;
\item $\varphi\land \psi\thickapprox\lnot(\lnot \varphi\lor\lnot \psi)$;
\item $\varphi\land(\lnot \varphi\lor \psi)\thickapprox \varphi\land \psi$; \label{rmp}
\item $0\lor \varphi\thickapprox \varphi$;
\item $1\thickapprox\lnot 0$.
\end{enumerate}
\end{definition}

Observe that all the identities in Definition \ref{def: IBSL} are regular. In the P\l onka sum representation  $\PL(\A_{i})_{i\in I}$ of an involutive bisemilattice $\A$, given $a,b\in A$, it is possible to check that $a,b\in A_{i}$ for some $i\in I$ if and only if $a\land(a\lor b)=a$ and $b\land(b\lor a)=b$. Moreover, if $a\in A_{i}$ and $i\leq j$, it holds $p_{ij}(a)=a\land(a\lor b)$, for any $b\in A_{j}$. 
 
We list some examples of involutive bisemilattices:
\begin{itemize}
    \item Boolean algebras, i.e., the involutive bisemilattices that satisfy one of the equivalent identities $\varphi \lor (\varphi \land \psi) \approx \varphi, \varphi \land \lnot \varphi \approx 0$ or $\varphi \lor \lnot \varphi \approx 1$; their P\l onka sum representations consist of a single fibre.
    \item Semilattices with zero, i.e., the involutive bisemilattices that satisfy one of the equivalent identities $\varphi \lor \psi \approx \varphi \land \psi$, $\lnot \varphi \approx \varphi$ or $0 \approx 1$; their P\l onka sum representations consist entirely of trivial fibres.
    \item The algebra $\alg{WK}$, namely, the $\Jdue$-free reduct of $\alg{WK}^{e}$ in Figure \ref{fig:WKe}. This algebra can be represented as the unique P\l onka sum whose fibres are a $2$-element Boolean algebra and a trivial one. $\alg{WK}$ generates the variety $\class{IBSL}$, in symbols $HSP(\alg{WK})=\class{IBSL}$.
\end{itemize}
In light of the above observations, it is easy to see that each Bochvar algebra has an $\class{IBSL}$-reduct, because the identities in Definition \ref{def: IBSL} coincide with the identities (1)-(8) of Theorem \ref{th: algebre di Bochvar2}. Given a Bochvar algebra $\A$, we will denote by $\PL(\A_{i})_{i\in I}$  the P\l onka decomposition of its $\mathsf{IBSL}$-reduct.

The subquasivariety  $\class{SIBSL}$ of $\class{IBSL}$, axiomatised relative to $\class{IBSL}$ by the quasi-identity 
\[
\varphi\thickapprox\neg\varphi \ \& \ \psi\thickapprox\neg\psi\Rightarrow \varphi\thickapprox\psi,
\]
comprises those members of $\class{IBSL}$ with at most one fixpoint, and is generated by $\alg{WK}$ as a quasivariety \cite[Thm. 7.1.19]{Bonziobook}.\footnote{This quasivariety is introduced  in \cite{Bonzio16SL}, while \cite{paoli2021extensions} considers the same class in a type with no constants.} Since each defining quasi-identity of $\class{SGIB}$ is valid in $\alg{WK}^e$, the following proposition holds.
\begin{proposition}\cite[Prop 3.5]{SMikBochvar}\label{prop: each BCA is SIBSL}
 Every Bochvar algebra has a $\mathsf{SIBSL}$-reduct.
 
\end{proposition}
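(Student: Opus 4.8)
The plan is to reduce the proposition to the verification of a single quasi-identity. As already observed in the preliminaries, every Bochvar algebra has an $\class{IBSL}$-reduct, because the identities \textbf{I1}--\textbf{I8} of Definition~\ref{def: IBSL} are precisely the identities (1)--(8) of Theorem~\ref{th: algebre di Bochvar2}. Since $\class{SIBSL}$ is axiomatised relative to $\class{IBSL}$ by the quasi-identity $\varphi\approx\neg\varphi\;\&\;\psi\approx\neg\psi\Rightarrow\varphi\approx\psi$ -- equivalently, by the requirement that $\neg$ have at most one fixpoint -- it suffices to show that this quasi-identity holds in every Bochvar algebra.

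The quickest route is semantic. Inspecting the algebra $\alg{WK}^e$ of Figure~\ref{fig:WKe}, one sees that $\ant$ is its only element $x$ satisfying $\neg x=x$, so the displayed quasi-identity holds in $\alg{WK}^e$. Quasi-identities are preserved by the class operators $I$, $S$ and $P$, and $\class{BCA}=ISP(\alg{WK}^e)$ by definition; hence the quasi-identity holds throughout $\class{BCA}$, which together with the $\class{IBSL}$-reduct observation establishes the proposition.

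A self-contained alternative is to derive the quasi-identity directly from the basis of Theorem~\ref{th: algebre di Bochvar2}. Fix a Bochvar algebra and elements $a,b$ with $a=\neg a$ and $b=\neg b$. Unfolding the abbreviations $\Jzero x\coloneqq\Jdue\neg x$ and $\Juno x\coloneqq\neg(\Jdue x\vee\Jdue\neg x)$, and using $a=\neg a$ together with idempotency of $\vee$ (item~\ref{2BCA:1}), one obtains $\Jzero a=\Jdue a$ and $\Juno a=\neg\Jdue a$. Substituting into item~\ref{2BCA:13} then yields
\[
\Jdue a=\neg(\Jzero a\vee\Juno a)=\neg(\Jdue a\vee\neg\Jdue a)=\neg 1=0,
\]
where the last two steps use items~\ref{2BCA:14} and~\ref{2BCA:6}. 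Thus $\Jzero a=\Jdue a=0$, and symmetrically $\Jzero b=\Jdue b=0$, so that $\Jzero a=\Jzero b$ and $\Jdue a=\Jdue b$; the quasi-identity from item~\ref{2BCA:quasi} of Theorem~\ref{th: algebre di Bochvar2} now gives $a=b$.

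There is no genuine obstacle in either route: the semantic argument is a one-line transfer once one recalls $\class{BCA}=ISP(\alg{WK}^e)$, and the only point needing a little care in the syntactic argument is to unfold $\Jzero$ and $\Juno$ before applying items~\ref{2BCA:13},~\ref{2BCA:14},~\ref{2BCA:6} and~\ref{2BCA:quasi}.
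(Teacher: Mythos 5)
Your proposal is correct and its primary (semantic) route is essentially the paper's own argument: the defining quasi-identity of $\class{SIBSL}$ holds in $\alg{WK}^e$, and since $\class{BCA}=ISP(\alg{WK}^e)$ and quasi-identities are preserved by $I$, $S$ and $P$, every Bochvar algebra has a $\class{SIBSL}$-reduct. Your additional syntactic derivation of the quasi-identity from the basis of Theorem~\ref{th: algebre di Bochvar2} (unfolding $\Jzero$ and $\Juno$ and using items~\ref{2BCA:13}, \ref{2BCA:14}, \ref{2BCA:6} and \ref{2BCA:quasi}) is also sound, though not needed.
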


 The connection between $\class{IBSL}$ and P\l onka sums of Boolean algebras provides some representation theorems for Bochvar algebras, as established in \cite{SMikBochvar}. We recall them below, as they play a key role in the subsequent sections. Notably, in the P\l onka sum representation of a Bochvar algebra all the homomorphisms are surjective. Consequently, each fibre is a quotient of the algebra $\A_{i_{0}}$; moreover, it is a quotient modulo a principal congruence.

{\begin{lemma}\label{lemma: omomorfismi sono suriettivi}
Let $\A$ be a Bochvar algebra with $\PL(\A_{i})_{i\in I}$ the P\l onka decomposition of its $\class{IBSL}$-reduct. Then 
\begin{enumerate}
\item $\PL(\A_{i})_{i\in I}$ has surjective homomorphisms;
\item  for every $i\neq i_{0}$, $p_{i_{0}i}$ is not injective;
\item for every $a\in A$ and every $i\in I$ (with $a\in A_i$), $\Jdue a\in p_{i_0 i}^{-1}(a)$ and $\Jzero a\in p_{i_0 i}^{-1}(\neg a)$.
\end{enumerate}
\end{lemma}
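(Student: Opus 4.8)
The plan is to exploit the fact that, by Proposition \ref{prop: each BCA is SIBSL}, every Bochvar algebra $\A$ has a $\class{SIBSL}$-reduct, so that its P\l onka decomposition $\PL(\A_{i})_{i\in I}$ has \emph{at most one trivial fibre}, and to use the external operation $\Jdue$ together with identity \eqref{2BCA:14} ($\Jdue\varphi\vee\neg\Jdue\varphi\thickapprox 1$) to locate, for each element, a preimage in the bottom fibre $\A_{i_0}$. For item (3), fix $a\in A_i$ and recall from the P\l onka-sum operations \eqref{eq: operazioni} that $\Jdue^{\A}$ acts fibrewise: $\Jdue a = \Jdue^{\A_i}(a)$ lies in $A_i$. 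Now observe that \eqref{2BCA:14} forces $\Jdue^{\A_i}(a)$ to be a complemented element of the Boolean algebra $\A_{i}$, indeed its value of $\Jdue$; more to the point, in the generating algebra $\WKt$ one checks that $\Jdue x$ and $x$ have the same image under the canonical map to the $2$-element Boolean algebra, and the same holds after P\l onka sums because the relevant identity $p_{i_0 i}(\Jdue\varphi)\approx p_{i_0 i}(\varphi)$ — equivalently $\Jdue\varphi\wedge(\Jdue\varphi\vee\varphi)\approx\Jdue\varphi$ combined with the analogous condition — is regular and holds in $\WKt$. Spelling this out: pick any $b\in A_{i_0}$; by the formula recalled after Definition \ref{def: IBSL}, $p_{i_0 i}(b)=b\wedge(b\vee c)$ for $c\in A_i$, and one shows $\Jdue a$ and $a$ have the same $p_{i_0i}$-fibre data, whence $p_{i_0 i}(\Jdue a)=a$; the statement for $\Jzero a=\Jdue\neg a$ follows by applying this to $\neg a$ and using that $p_{i_0 i}$ is a homomorphism commuting with $\neg$.

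For item (1), surjectivity of $p_{i_0 i}$ (hence of all $p_{ij}$, since for $i\le j$ one has $p_{ij}\circ p_{i_0 i}=p_{i_0 j}$ and a composite with surjective second factor forces surjectivity onto the image, then a diagram chase gives $p_{ij}$ surjective) is immediate from item (3): every $a\in A_i$ has $\Jdue a\in A_{i_0}$ as a preimage. For item (2), suppose toward a contradiction that $p_{i_0 i}$ is injective for some $i\neq i_0$; being also surjective it is then an isomorphism $\A_{i_0}\cong\A_i$, so $\A_i$ is a Boolean algebra. But then the element $0^{\A_i}\neq 1^{\A_i}$ would be a second "special" element creating two distinct fixpoints or otherwise contradicting the $\class{SIBSL}$ condition; the cleanest route is: in $\class{IBSL}$, two elements $a\in A_{i_0}$, $a'\in A_i$ with $i\ne i_0$ satisfy $a\vee a'\in A_i$ and $\neg(a\vee a')=\neg a\vee\neg a'$, and using \eqref{2BCA:quasi} (the defining quasi-identity of $\class{BCA}$) applied to suitable elements forces a collapse unless $\A_i$ has a fixpoint — i.e. an element with $x\approx\neg x$ — which a nontrivial Boolean algebra does not have. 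Hence $\A_i$ cannot be isomorphic to $\A_{i_0}$ via $p_{i_0 i}$, so $p_{i_0 i}$ is not injective.

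I expect the main obstacle to be item (2): while items (1) and (3) are essentially direct unwindings of the P\l onka-sum operation table and identity \eqref{2BCA:14}, ruling out injectivity of $p_{i_0 i}$ genuinely requires invoking the quasi-equational content of $\class{BCA}$ (not just the $\class{IBSL}$ or even $\class{SIBSL}$ identities), and pinning down exactly which instance of the quasi-identity \eqref{2BCA:quasi} — or of \eqref{2BCA:18} governing $\Jdue$ on joins — yields the contradiction needs care. The likely clean argument: if $p_{i_0 i}$ were injective, then for $a\in A_{i_0}$ and $a'\in A_i$ the elements $a\vee a'$ and $a'$ lie in $A_i$ and have equal $\Jzero$ and $\Jdue$ values (since those land in $A_{i_0}$ and $p_{i_0 i}$-preimages are forced by injectivity to coincide with the fibrewise computation), so \eqref{2BCA:quasi} gives $a\vee a'=a'$ for all $a\in A_{i_0}$, forcing $A_{i_0}$ trivial, contradicting $i\ne i_0$ (which presupposes $A_{i_0}$ nontrivial, else $I$ collapses). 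I would present that as the core computation and keep the rest as routine verification.
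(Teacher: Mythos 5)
Your overall strategy is sound — use $\Jdue\varphi\vee\neg\Jdue\varphi\thickapprox 1$ to place $\Jdue a$ in the bottom fibre, use the formula $p_{i_0i}(b)=b\wedge(b\vee c)$ together with identities checked in $\WKt$ (legitimate, since $\class{BCA}=ISP(\WKt)$) to get $p_{i_0i}(\Jdue a)=a$, deduce (1) from (3), and invoke the quasi-identity for (2) — and you are right that (2) genuinely needs the quasi-identity (the product of a Boolean algebra with a semilattice satisfies all the identities of $\class{BCA}$ and has an injective $p_{i_0i}$). But the execution of (3) contains a real error: the claim that $\Jdue$ ``acts fibrewise'', so that $\Jdue a=\Jdue^{\A_i}(a)\in A_i$, is false and in fact contradicts the very statement you are proving ($\Jdue a\in p_{i_0i}^{-1}(a)\subseteq A_{i_0}$, and fibres are disjoint). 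The P\l onka decomposition is only of the $\class{IBSL}$-reduct; $\Jdue$ is not an operation of the P\l onka sum, and it cannot act fibrewise because it satisfies irregular identities — indeed it is precisely $\Jdue a\vee\neg\Jdue a=1$, with $1$ interpreted in $\A_{i_0}$ and joins of elements of a fibre staying in that fibre, that forces $\Jdue a\in A_{i_0}$. Moreover the identity you write down, $\Jdue\varphi\wedge(\Jdue\varphi\vee\varphi)\thickapprox\Jdue\varphi$, fails in $\WKt$ at $\varphi=\ant$; what you need is $\Jdue\varphi\wedge(\Jdue\varphi\vee\varphi)\thickapprox\varphi$ (e.g.\ from $\varphi\vee\Jdue\varphi\thickapprox\varphi$ and $\varphi\wedge\Jdue\varphi\thickapprox\varphi$, both valid in $\WKt$), which with the formula for $p_{i_0i}$ yields $p_{i_0i}(\Jdue a)=a$; the $\Jzero$ half then follows by applying this to $\neg a$.

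The main gap is in (2). Your ``likely clean argument'' hinges on the claim that, under injectivity of $p_{i_0i}$, the elements $a\vee a'$ and $a'$ (with $a\in A_{i_0}$, $a'\in A_i$) have equal $\Jzero$ and $\Jdue$ values. This is not derivable: injectivity makes $\Jdue b$ the unique $p_{i_0i}$-preimage of $b$, so $\Jdue(a\vee a')=a\vee\Jdue a'$ while $\Jdue a'=p_{i_0i}^{-1}(a')$, and these coincide only when $a\leq\Jdue a'$ — your premise is essentially equivalent to the desired conclusion $a\vee a'=a'$, so the step is circular (and the earlier ``fixpoint/SIBSL'' route you sketch also does not go through, as the example above shows). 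The correct instantiation of the quasi-identity is simpler: take any $a\in A_{i_0}$ and set $a':=p_{i_0i}(a)\in A_i$. By item (3), $\Jdue a'$ and $\Jzero a'$ are preimages of $a'$ and $\neg a'$, so injectivity forces $\Jdue a'=a$ and $\Jzero a'=\neg a$; on the other hand $\Jdue a=a$ and $\Jzero a=\neg a$ because $a$ lies in the bottom fibre (item (3) with $i=i_0$). The quasi-identity of Theorem \ref{th: algebre di Bochvar2} then gives $a=a'$, contradicting the disjointness of $A_{i_0}$ and $A_i$. Finally, for (1) the transfer is just: $p_{i_0j}=p_{ij}\circ p_{i_0i}$ is surjective by (3), hence $p_{ij}$ is surjective; no further diagram chase is needed.
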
}

\begin{theorem}\cite[Thm. 3.17]{SMikBochvar}\label{thm: converse decomposition}
Let $\A = \pair{A,\wedge,\vee,\neg, 0,1}$ be an involutive bisemilattice whose P\l onka sum representation is such that

\begin{enumerate}
 \item all homomorphisms are surjective and $p_{i_{0}i}$ is not injective for every $i_{0}\neq i\in I$;
 \item for each $i\in I$ there exists an element $a_{i}\in A_{i_{0}}$ such that $p_{i_{0} i}\colon \mathbf{[0},\mathbf{a}_{i}]\to \A_i$ is an isomorphism, with $a_{i}\neq a_{j}$ for $i\neq j$ and, in particular, $a_{j}<a_{i}$ for each $i<j$.
\end{enumerate}
Define, for every $a\in A_{i}$ and $i\in I$, $J_{_2}a=p_{i_{0}i}^{-1}(a)\in[0,a_{i}]$.
Then $\mathbf{B} = \pair{A,\wedge,\vee,\neg, 0,1, J_{_2}}$, 
is a Bochvar algebra. 
\end{theorem}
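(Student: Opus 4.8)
The plan is to verify directly that $\B$ satisfies the quasi-equational basis for $\class{BCA}$ recorded in Theorem~\ref{th: algebre di Bochvar2}. Its first eight identities are exactly the involutive bisemilattice axioms of Definition~\ref{def: IBSL}, so they hold because the $\langle\wedge,\vee,\neg,0,1\rangle$-reduct of $\B$ is the $\class{IBSL}$-algebra $\A$; hence the real work is to check identities (9)--(12) and the quasi-identity, where $\Jzero\varphi\coloneqq\Jdue\neg\varphi$ and $\Juno\varphi\coloneqq\neg(\Jdue\varphi\vee\Jdue\neg\varphi)$. Throughout I would write $\overline a\coloneqq\Jdue a$; by construction $\overline a\in[0,a_i]\subseteq A_{i_0}$ whenever $a\in A_i$, so $\Jdue$ always takes values in the base fibre. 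I would first isolate two facts that carry most of the argument. (i) The base fibre $\A_{i_0}$ is a Boolean subalgebra of the reduct of $\B$ (in a P\l onka sum the base fibre is always a subalgebra), and $a_{i_0}=1$, since the identity map $p_{i_0 i_0}$ is required by hypothesis (2) to be an isomorphism $[0,a_{i_0}]\to\A_{i_0}$; consequently $\Jdue$ restricts to the identity on $A_{i_0}$. (ii) For $a\in A_i$, the Boolean isomorphism $p_{i_0 i}\colon[0,a_i]\to\A_i$ sends $0\mapsto 0$ and $a_i\mapsto 1_{\A_i}$, so $p_{i_0 i}(x)=p_{i_0 i}(x\wedge a_i)$ for every $x\in A_{i_0}$ and, transporting the relative complement of $[0,a_i]$ across the isomorphism, $\Jdue\neg a=a_i\wedge\neg\overline a$ (an element of $A_{i_0}$).

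With (i) and (ii) in hand, identities (9)--(11) reduce to computations inside the Boolean algebra $\A_{i_0}$: for (11), $\overline a\vee\neg\overline a=1$; for (9), $\Jzero\Jdue a=\Jzero\overline a=\Jdue\neg\overline a=\neg\overline a=\neg\Jdue a$, the middle step using $\overline a\in A_{i_0}$; for (10), from $\overline a\leq a_i$ one gets $\Jdue\neg a\vee\overline a=a_i$, hence $\Juno a=\neg a_i$, and then $\Jzero a\vee\Juno a=(a_i\wedge\neg\overline a)\vee\neg a_i=\neg(\overline a\wedge a_i)=\neg\overline a$, so $\neg(\Jzero a\vee\Juno a)=\overline a=\Jdue a$. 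For the quasi-identity I would argue as follows: if $a\in A_i$ and $b\in A_j$ satisfy $\Jdue a=\Jdue b=:c$ and $\Jzero a=\Jzero b$, then $c\leq a_i\wedge a_j$ and $a_i\wedge\neg c=a_j\wedge\neg c$; joining $c$ to both sides of the latter equation gives $a_i=a_j$, whence $i=j$ by the distinctness of the $a_i$ assumed in (2), and then injectivity of $p_{i_0 i}$ on $[0,a_i]$ forces $a=p_{i_0 i}(c)=b$.

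The substantive point, and the step I expect to be the main obstacle, is the P\l onka-type identity (12), $\Jdue(\varphi\vee\psi)\thickapprox(\Jdue\varphi\wedge\Jdue\psi)\vee(\Jdue\varphi\wedge\Jdue\neg\psi)\vee(\Jdue\neg\varphi\wedge\Jdue\psi)$, since this is the only place where the semilattice $I$ intervenes. For $a\in A_i$ and $b\in A_j$ put $k=i\vee j$. Using the compatibility condition $p_{i_0 k}=p_{ik}\circ p_{i_0 i}=p_{jk}\circ p_{i_0 j}$ together with $a=p_{i_0 i}(\overline a)$ and $b=p_{i_0 j}(\overline b)$, I would compute $a\vee^{\B}b=p_{i_0 k}(\overline a)\vee^{\A_k}p_{i_0 k}(\overline b)=p_{i_0 k}(\overline a\vee\overline b)$, and then, since $p_{i_0 k}$ acts as ``meet with $a_k$'' modulo its kernel $[0,\neg a_k]$, conclude $\Jdue(a\vee b)=(\overline a\vee\overline b)\wedge a_k$. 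The crux is the equality $a_k=a_i\wedge a_j$ (the inclusion $a_k\leq a_i\wedge a_j$ being automatic, as the assignment of fibre-bounds is antitone): one needs that joins in $I$ are sent to meets in $A_{i_0}$, i.e. precisely that $\{a_l:l\in I\}$ is a meet-subsemilattice of $\A_{i_0}$ with unit $1=a_{i_0}$ dual to $I$ — a feature built into the hypotheses on the interval-indexing of the fibres. Granting it, $\Jdue(a\vee b)=(\overline a\vee\overline b)\wedge a_i\wedge a_j=(\overline a\wedge a_j)\vee(\overline b\wedge a_i)$ using $\overline a\leq a_i$ and $\overline b\leq a_j$; on the right-hand side, $\overline b\leq a_j$ collapses the first two disjuncts to $\overline a\wedge(\overline b\vee(a_j\wedge\neg\overline b))=\overline a\wedge a_j$, and reinserting $\overline a\wedge\overline b$ (which lies below $\overline a\wedge a_j$) rewrites the remaining disjunct $a_i\wedge\neg\overline a\wedge\overline b$ so that the total also equals $(\overline a\wedge a_j)\vee(\overline b\wedge a_i)$. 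The two sides thus coincide, (12) holds, and $\B\in\class{BCA}$. Apart from (12), the only care needed is to keep straight which operation is being computed in which fibre; everything else is bookkeeping inside the Boolean algebra $\A_{i_0}$.
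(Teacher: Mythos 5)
Most of your verification is fine: reducing the task to items (9)--(12) and the quasi-identity of Theorem~\ref{th: algebre di Bochvar2}, the facts that $a_{i_0}=1$, that $\Jdue$ is the identity on the bottom fibre, that $\Jdue\neg a=a_i\wedge\neg\Jdue a$, the computations for (9)--(11), the quasi-identity argument, and the reduction of (12) to $\Jdue(a\vee b)=(\Jdue a\vee\Jdue b)\wedge a_{i\vee j}$ are all correct. (The paper gives no proof of this statement; it is imported from \cite{SMikBochvar}.)

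The gap is exactly at the point you chose to ``grant'': the equality $a_{i\vee j}=a_i\wedge a_j$ is \emph{not} built into hypotheses (1)--(2). Those hypotheses give only injectivity and antitonicity of $i\mapsto a_i$, hence $a_{i\vee j}\leq a_i\wedge a_j$ (the kernel filter of $p_{i_0,i\vee j}$ contains those of $p_{i_0 i}$ and $p_{i_0 j}$); the converse inequality can genuinely fail while (1)--(2) hold, and then the conclusion fails too. Concretely, let $\A_{i_0}$ be the $16$-element Boolean algebra with atoms $w,x,y,z$, let $I$ be the non-chain semilattice $i_0<i,j<k=i\vee j$ with $i,j$ incomparable, and let the fibres be the quotients of $\A_{i_0}$ by the principal filters generated by $a_i=w\vee x\vee y$, $a_j=w\vee x\vee z$, $a_k=w$, with the induced maps. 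All homomorphisms are surjective, each $p_{i_0 l}$ ($l\neq i_0$) is non-injective, each restriction $p_{i_0 l}\colon[0,a_l]\to\A_l$ is an isomorphism, and the $a_l$ are pairwise distinct and strictly antitone; yet for $\varphi=1_{A_i}$, $\psi=1_{A_j}$ the defined $\Jdue$ gives $\Jdue(\varphi\vee\psi)=a_k=w$, while the right-hand side of (12) equals $a_i\wedge a_j=w\vee x$ (since $\Jdue\neg\varphi=\Jdue\neg\psi=0$). This is precisely the paper's forbidden configuration of Remark~\ref{rem: forbidden conf}, with the fibres enlarged so that the non-injectivity clause of hypothesis (1) is satisfied. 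So the meet-closure of $\{a_l\}_{l\in I}$ dually to $I$ is an additional substantive input: it is automatic when $I$ is a chain, it is a necessary feature of genuine Bochvar algebras (it is what Theorem~\ref{bocciavera} proves), and it holds by construction in the way the result is used in this paper (Theorem~\ref{merendina}, where the index set is literally a meet-subsemilattice of the base algebra), but it cannot be read off hypotheses (1)--(2) as stated. Since your whole treatment of identity (12) rests on this granted equality, the proof has a genuine gap at its central step; to close it you must either add $a_{i\vee j}=a_i\wedge a_j$ as a hypothesis or restrict to situations (chains, or index sets arising as meet-subsemilattices of $\A_{i_0}$) where it is available.
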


\begin{theorem}\cite[Cor. 3.18]{SMikBochvar}\label{excorollario21}
   Let $\A\in\class{SIBSL}$ be such that the underlying semilattice direct system of its P\l onka sum representation has surjective and non-injective homomorphisms.
 The following are equivalent:
\begin{enumerate}
 \item $\A$ is the reduct of a Bochvar algebra;
 \item for each $i\in I $, $1/ker_{p_{i_{0}i}}$ is a principal filter, with generator $a_{i}\in A_{i_{0}}$. Moreover,  if $i\neq j$  then $a_{i}\neq a_{j}$ and $a_{j}<a_{i}$ for each $i<j$;
 \item  for each $i\in I$ there exists an element $a_{i}\in A_{i_{0}}$ such that $p_{i_{0} i}\colon \mathbf{[0},\mathbf{a}_{i}]\to \A_i$ is an isomorphism. Moreover,  if $i\neq j$  then $a_{i}\neq a_{j}$ and $a_{j}<a_{i}$ for each $i<j$.
\end{enumerate}
\end{theorem}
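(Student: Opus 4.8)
The plan is to prove the three conditions equivalent by a cyclic argument $(1)\Rightarrow(2)\Rightarrow(3)\Rightarrow(1)$, using Lemma \ref{lemma: omomorfismi sono suriettivi} and Theorem \ref{thm: converse decomposition} as the main levers. Throughout, $\A\in\class{SIBSL}$ is fixed with a P\l onka sum representation $\PL(\A_i)_{i\in I}$ whose transition homomorphisms are all surjective and non-injective, so in particular condition (1) of Theorem \ref{thm: converse decomposition} is automatically met; what remains is to match up the various incarnations of condition (2) of that theorem with the principal-filter statement and with the interval-isomorphism statement here.

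\textbf{The implications.} For $(3)\Rightarrow(1)$: this is essentially immediate from Theorem \ref{thm: converse decomposition}, since the hypotheses there are exactly our standing assumption together with condition (3). So the new content lies in $(1)\Rightarrow(2)$ and $(2)\Rightarrow(3)$. For $(1)\Rightarrow(2)$: assume $\A$ underlies a Bochvar algebra $\B$. By Lemma \ref{lemma: omomorfismi sono suriettivi}(1)--(3), each $p_{i_0 i}$ is a surjective homomorphism of Boolean algebras, hence its kernel is a congruence of $\A_{i_0}$, i.e.\ corresponds to a Boolean filter, namely $1/\ker_{p_{i_0 i}}$; what must be shown is that this filter is \emph{principal}. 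The natural candidate generator is $a_i \coloneqq \Jdue b$ for any $b \in A_i$ (well-defined because, by part (3) of the lemma, $\Jdue b \in p_{i_0 i}^{-1}(b)$, and one checks it is independent of the choice of $b$ using the Bochvar-algebra identities, e.g.\ $\Jdue\Jdue\varphi \approx \Jdue\varphi$ and the behaviour of $\Jdue$ on joins, equation \eqref{2BCA:18}). One then verifies $a_i$ generates $1/\ker_{p_{i_0i}}$: the inclusion $\uparrow\! a_i \subseteq 1/\ker$ follows since $p_{i_0i}(a_i) = p_{i_0i}(\Jdue b) = b$ while $p_{i_0 i}(1)=1$, wait — more carefully, one shows $p_{i_0i}(a_i)$ is the top of $\A_i$ and that $p_{i_0i}$ collapses $[a_i,1]$ to a point, while being injective below $a_i$; the reverse inclusion uses non-injectivity together with the fact that in a Boolean algebra a surjection onto a Boolean quotient whose kernel-filter has a least element must have that least element as generator. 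The ordering claims $a_i \neq a_j$ and $a_j < a_i$ for $i<j$ come from the compatibility condition $p_{j k}\circ p_{i j} = p_{i k}$ applied to $i_0 \leq i \leq j$: the filter for $i_0 \to j$ contains the filter for $i_0 \to i$, so the generators are comparable, and they are distinct by non-injectivity of the intermediate maps $p_{ij}$.

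\textbf{From (2) to (3) and the main obstacle.} For $(2)\Rightarrow(3)$: given that $1/\ker_{p_{i_0 i}} = \uparrow\! a_i$ is principal, one shows that the restriction $p_{i_0 i}\colon [0,a_i] \to \A_i$ is an isomorphism of the Boolean-algebra-in-the-language-sense reducts. Surjectivity: every $c \in A_i$ has a preimage $d \in A_{i_0}$, and then $d \wedge a_i \in [0,a_i]$ still maps to $c$ because $d$ and $d\wedge a_i$ differ by an element of $[a_i,1]$, wait — the precise point is that $d \equiv d \wedge a_i \pmod{\ker_{p_{i_0i}}}$ since their ``symmetric difference'' lies above... here one must be slightly careful, the cleanest route is: $p_{i_0 i}$ restricted to $[0,a_i]$ is surjective because $p_{i_0i}(a_i)$ is the unit of $\A_i$ and $[0,a_i]$ maps onto the whole quotient as the quotient map of a Boolean algebra by a principal filter restricts to an iso on the complementary interval. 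Injectivity on $[0,a_i]$: if $x,y \leq a_i$ and $p_{i_0i}(x) = p_{i_0i}(y)$ then $x \leftrightarrow y \in \uparrow\! a_i$, i.e.\ $a_i \leq (x \leftrightarrow y)$, which together with $x,y \leq a_i$ forces $x = y$ by a short Boolean computation. That it is an isomorphism (not merely a bijective homomorphism) follows since bijective homomorphisms between these finite-type algebras invert to homomorphisms, or directly since $[0,a_i]$ with relativised operations is itself a Boolean algebra. The ordering conditions transfer verbatim from (2).

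\textbf{Main obstacle.} I expect the delicate point to be the direction $(1)\Rightarrow(2)$ — specifically, pinning down that the kernel-filter of $p_{i_0i}$ is principal with generator exactly $\Jdue b$. The subtlety is that $\A$ a priori only knows it is an $\class{SIBSL}$-reduct; the operation $\Jdue$ is extra structure, and one must use the Bochvar identities of Theorem \ref{th: algebre di Bochvar2} (particularly those governing how $\Jdue$ interacts with $\vee$ and $\neg$, and the idempotency $\Jdue\Jdue\varphi\approx\Jdue\varphi$) to show both that $\Jdue b$ lands in the right fibre and sits at the right spot, and that it does not depend on the representative $b$. Everything else is routine Boolean-algebra and P\l onka-sum bookkeeping, leaning on Lemma \ref{lemma: omomorfismi sono suriettivi} and Theorem \ref{thm: converse decomposition}.
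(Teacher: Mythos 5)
First, a point of reference: the paper does not prove this statement at all — it is quoted as a preliminary from \cite[Cor. 3.18]{SMikBochvar} — so your proposal can only be judged on its own terms. Its architecture is sensible and partly correct: $(3)\Rightarrow(1)$ is indeed immediate from Theorem \ref{thm: converse decomposition}, and your $(2)\Rightarrow(3)$ is the standard Boolean correspondence between the quotient by a principal filter $[a_i)$ and the interval $[0,a_i]$; the injectivity computation you sketch there is fine.

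The genuine gap is in $(1)\Rightarrow(2)$. Your candidate generator ``$a_i:=\Jdue b$ for any $b\in A_i$, independent of the choice of $b$'' is incorrect: by Lemma \ref{lemma: omomorfismi sono suriettivi}(3), $p_{i_0i}(\Jdue b)=b$, so distinct $b$'s give distinct elements $\Jdue b$; the generator must be $\Jdue(1^{A_i})$ specifically (this is exactly the element the paper itself uses later, e.g. $1/ker_{p_{1i}}=[\Jdue^{\A}(1^{A_i}))$ in the proof of Theorem \ref{montaldo}). More importantly, the inclusion $1/ker_{p_{i_0i}}\subseteq[\Jdue(1^{A_i}))$ — which \emph{is} the principality claim — is never actually argued: your appeal to ``a surjection onto a Boolean quotient whose kernel-filter has a least element must have that least element as generator'' presupposes that the filter has a least element, which is precisely what has to be proved, and non-injectivity is of no help here (non-injective surjections with non-principal kernel filters abound). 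What does the job is a short computation with the Bochvar identities: if $c\in A_{i_0}$ and $p_{i_0i}(c)=1^{A_i}$, then $\lnot c\lor 0^{A_i}=0^{A_i}$ in the P\l onka sum, so Definition \ref{def: algebre di Bochvar}.(17) yields $\Jdue(1^{A_i})=\Jzero(0^{A_i})=\Jzero(\lnot c)\land \Jzero(0^{A_i})=c\land\Jdue(1^{A_i})$ (using $\Jdue c=c$ for $c$ in the bottom fibre), i.e. $\Jdue(1^{A_i})\leq c$. Finally, condition (2) asserts $a_i\neq a_j$ for \emph{all} $i\neq j$, including incomparable pairs; your compatibility-plus-non-injectivity argument only reaches comparable pairs. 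The incomparable case needs a separate argument, e.g. first show $\Jdue(0^{A_i})=0$ by similar identity manipulations and then invoke the quasi-identity (19) of Definition \ref{def: algebre di Bochvar}: $a_i=a_j$ would force $J_{_k}1^{A_i}=J_{_k}1^{A_j}$ for $k=0,1,2$, hence $1^{A_i}=1^{A_j}$, contradicting the disjointness of the fibres.
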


Occasionally, if $\A$ is a Bochvar algebra and $\PL(\A_{i})_{i\in I}$ is the P\l onka decomposition of its $\class{IBSL}$-reduct, the top element of the fibre $\A_{i}$ will be denoted as $1^{A_i}$ or as $1_i$, while the bottom element of the same fibre will be denoted as $0^{A_i}$ or as $0_i$.

\section{A categorical equivalence for Bochvar algebras}\label{prelly}

Capitalising on the results that conclude the previous section, we aim to show that all the information about a Bochvar algebra $\A$, whose involutive bisemilattice reduct is represented as $\PL(\A_{i})_{i\in I}$, is encoded in two components: \emph{a)} the bottom fibre $\A_{i_{0}}$ of the attendant semilattice direct system, and \emph{b)} the meet-semilattice of the principal filters of $\A_{i_{0}}$ whose associated congruences determine the other fibres. Indeed, in this section we will establish that the algebraic category of Bochvar algebras is equivalent to a category whose objects are pairs consisting of a Boolean algebra $\B$ and a meet-subsemilattice with unit $\I$ of $\B$. We will observe that this category is a subcategory of a \emph{comma category} \cite{Lawvere} based on Boolean algebras and semilattices. This observation could open avenues for further exploration, particularly by linking Bochvar algebras to well-established algebraic and categorical concepts.

\begin{definition}
A \emph{Bochvar system} is a pair $\mathbb{B} = \langle \mathbf{B}, \mathbf{I}\rangle$ such that $\mathbf{B} = \langle B, \land, \lor, \lnot, 0, 1\rangle$ is a Boolean algebra and $\mathbf{I} = \langle I, \land, 1 \rangle$ is a meet-subsemilattice with unit of $\mathbf{B}$.
\end{definition}

Hereafter, for $b \in B$, by $[b)$ we denote the principal lattice filter generated by $b$ in $\B$, and for $F$ a filter of $\B$, by $\B/F$ we denote the quotient $\B/\theta_F$, where
\[
\theta_F = \{ \langle a, b \rangle \in B^2 : (\lnot a \lor b) \land (\lnot b \lor a) \in F\}.
\]
Our aim is to associate a Bochvar algebra to each Bochvar system. Thus, if $\mathbb{B} = \langle \mathbf{B}, \mathbf{I}\rangle$ is a Bochvar system, let
\[
\mathbb{A}_{\mathbb{B}} = \left\langle \{\mathbf{A}_{i}\}_{i\in I},\mathbf{I}^\partial,\{p_{ij}%
:i\leq_{\mathbf{I}^\partial}j\}\right\rangle
\]
be such that:
\begin{itemize}
    \item for all $i \in I$, $\mathbf{A}_i := \mathbf{B}/[i)$;
    \item $\mathbf{I}^\partial$ is the lower-bounded join-semilattice dual to $\mathbf{I}$;
    \item for all $i,j \in I$ such that $i\leq_{\mathbf{I}^\partial}j$, $p_{ij}(a/[i)) := (a/[i))/[j)$.
\end{itemize}

\begin{lemma}\label{somatizzo}
    $\mathbb{A}_{\mathbb{B}}$ is a semilattice direct system of Boolean algebras.
\end{lemma}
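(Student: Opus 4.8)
The plan is to verify, clause by clause, the items in the definition of a semilattice direct system, together with the requirement that each fibre be a Boolean algebra. I would start with the index structure. Since $\mathbf{I} = \langle I, \land, 1\rangle$ is a meet-semilattice whose unit $1$ is by definition its greatest element, the order dual $\mathbf{I}^\partial$ is a join-semilattice in which $1$ is the \emph{least} element; hence $\mathbf{I}^\partial$ is a lower-bounded semilattice with $i_{0} = 1$, and its induced order $\leq_{\mathbf{I}^\partial}$ is the converse of the restriction to $I$ of the Boolean order of $\mathbf{B}$. The join of $i,j$ in $\mathbf{I}^\partial$ is the Boolean meet $i \land j$, which lies in $I$ precisely because $\mathbf{I}$ is a meet-\emph{sub}semilattice of $\mathbf{B}$, so $\mathbf{I}^\partial$ really is a semilattice on $I$. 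I would record, for repeated later use, the equivalence $i \leq_{\mathbf{I}^\partial} j \iff j \leq_{\mathbf{B}} i \iff [i) \subseteq [j)$.

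Next I would deal with the fibres and the transition maps. Each $\mathbf{A}_i = \mathbf{B}/[i)$ is a quotient of the Boolean algebra $\mathbf{B}$, hence itself a Boolean algebra; pairwise disjointness of the universes of the $\mathbf{A}_i$ may be assumed without loss of generality, after relabelling elements by tagging each with its index. For $i \leq_{\mathbf{I}^\partial} j$ the observation above gives $[i) \subseteq [j)$, whence $\theta_{[i)} \subseteq \theta_{[j)}$, so the assignment $a/[i) \mapsto a/[j)$ is a well-defined map $p_{ij}\colon \mathbf{A}_i \to \mathbf{A}_j$ — indeed the canonical surjection supplied by the correspondence theorem — and it is a homomorphism because projections between quotient algebras always are. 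The compatibility conditions are then read off the defining formula: $p_{ii}$ fixes every $a/[i)$ and so is the identity, and for $i \leq_{\mathbf{I}^\partial} j \leq_{\mathbf{I}^\partial} k$ the composite $p_{jk}\circ p_{ij}$ sends $a/[i)$ to $a/[j)$ and thence to $a/[k)$, which is exactly $p_{ik}(a/[i))$.

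I do not anticipate a genuine obstacle: the argument is a bookkeeping exercise. The one point that calls for care — and the place where a direction error would be fatal — is the order reversal involved in passing from $\mathbf{I}$ to $\mathbf{I}^\partial$: one must check that the meet structure of $\mathbf{I}$ (equivalently, the unit-as-top) becomes the join structure (equivalently, the bottom) required of the index semilattice, and that the resulting order on $I$ matches filter inclusion in exactly the direction that renders each $p_{ij}$ well defined. Once this is pinned down, everything else is routine.
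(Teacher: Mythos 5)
Your proposal is correct and follows essentially the same route as the paper: use $i \leq_{\mathbf{I}^\partial} j \iff j \leq_{\mathbf{B}} i \iff [i) \subseteq [j)$ to get well-definedness of each $p_{ij}$, then check $p_{ii} = \mathrm{id}$ and $p_{jk}\circ p_{ij} = p_{ik}$ (the paper cites the General Homomorphism Theorem for the composition step, where you read it off the canonical identification $a/[i) \mapsto a/[j)$ — the same content). Your extra verification that $\mathbf{I}^\partial$ is a lower-bounded join-semilattice with least element $1$, and the remark on disjointifying the fibres, are points the paper leaves implicit.
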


\begin{proof}
    It suffices to show that each $p_{ij}$ is well-defined, that $p_{ii}$ is the identity on $\mathbf{A}_i$ and that for $i \leq_{\mathbf{I}^\partial} j \leq_{\mathbf{I}^\partial} k$, $p_{jk} \circ p_{ij} = p_{ik}$.

    Since $\mathbf{I}$ is a subsemilattice with unit of $\mathbf{B}$, whenever $i\leq_{\mathbf{I}^\partial}j$, we have that $j \leq_{\mathbf{B}} i$ and hence $[i) \subseteq [j)$. Therefore $p_{ij}(a/[i)) = (a/[i))/[j)$ is a congruence class in $\mathbf{A}_i = \mathbf{B}/[i)$ and thus $p_{ij}$ is a well-defined surjective homomorphism from $\mathbf{A}_i$ to $\mathbf{A}_j$. By definition, $p_{ii}(a/[i)) = (a/[i))/[i) = a/[i)$ because the congruence that corresponds to the filter $[i)$ in $\B$ is the identity congruence in $\mathbf{B}/[i)$. Finally, by the General Homomorphism Theorem (see e.g. \cite[Thm. B2]{MPT}, 
    \[
    p_{jk} \circ p_{ij}(a/[i)) = ((a/[i))/[j))/[k) = (a/[i))/[k).
    \]
\end{proof}

Since $\mathbb{A}_{\mathbb{B}}$ is a semilattice direct system of Boolean algebras, $\PL(\mathbb{A}_{\mathbb{B}})$ is an involutive bisemilattice. We now prove that it can be expanded to a Bochvar algebra in a unique way.

\begin{theorem}\label{merendina}
    $\PL(\mathbb{A}_{\mathbb{B}})$ is the involutive bisemilattice reduct of a unique Bochvar algebra $\mathbf{A}_{\mathbb{B}}$.
\end{theorem}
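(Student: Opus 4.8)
The plan is to verify that the P\l onka sum $\PL(\mathbb{A}_{\mathbb{B}})$ satisfies the hypotheses of Theorem~\ref{thm: converse decomposition}, which then yields a Bochvar algebra structure, and afterwards to argue uniqueness via the quasi-identity~\eqref{2BCA:quasi} of Theorem~\ref{th: algebre di Bochvar2} together with Lemma~\ref{lemma: omomorfismi sono suriettivi}. First I would recall that by Lemma~\ref{somatizzo} the underlying semilattice direct system consists of Boolean algebras with all $p_{ij}$ surjective, so $\PL(\mathbb{A}_{\mathbb{B}})$ is an involutive bisemilattice with the bottom fibre $\mathbf{A}_1 = \mathbf{B}/[1) \cong \mathbf{B}$ (since $[1) = \{1\}$ and the associated congruence is the identity). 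The index set of the direct system is $\mathbf{I}^\partial$, whose bottom element is the unit $1$ of $\mathbf{I}$, matching the role of $i_0$.

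Next I would check condition~(1) of Theorem~\ref{thm: converse decomposition}: all homomorphisms are surjective (already noted), and $p_{1i}$ is non-injective whenever $i \neq 1$. Indeed $p_{1i}$ is (up to the identification $\mathbf{A}_1 \cong \mathbf{B}$) the canonical quotient map $\mathbf{B} \to \mathbf{B}/[i)$, and this is injective precisely when $[i) = \{1\}$, i.e. $i = 1$; so for $i \neq 1$ it is non-injective. For condition~(2), I must produce for each $i \in I$ an element $a_i \in \mathbf{A}_1 \cong \mathbf{B}$ such that $p_{1i}$ restricts to an isomorphism $[\mathbf{0}, \mathbf{a}_i] \to \mathbf{A}_i$, with $a_i \neq a_j$ for $i \neq j$ and $a_j < a_i$ whenever $i < j$ in $\mathbf{I}^\partial$. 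The natural choice is $a_i = \lnot i$ (the Boolean complement of $i$ in $\mathbf{B}$): the interval $[0, \lnot i]$ in $\mathbf{B}$ is a Boolean algebra isomorphic to $\mathbf{B}/[i)$ via the map $b \mapsto b/[i)$, a standard fact about quotients of Boolean algebras by principal filters (each congruence class mod $[i)$ has a unique representative below $\lnot i$). The inequalities follow by order-reversal: if $i <_{\mathbf{I}^\partial} j$ then $j <_{\mathbf{B}} i$, hence $\lnot i <_{\mathbf{B}} \lnot j$, i.e. $a_i < a_j$ --- here I should double-check the direction against the statement of Theorem~\ref{thm: converse decomposition}, which requires $a_j < a_i$ for $i < j$; since $i <_{\mathbf{I}^\partial} j$ means $i$ is \emph{lower} in the direct system, and $j <_{\mathbf{B}} i$, we get $a_j = \lnot j > \lnot i = a_i$, so indeed $a_i < a_j$, matching after a careful reading of which order is which. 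Distinctness of the $a_i$ follows since $i \mapsto \lnot i$ is injective on $\mathbf{B}$.

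Having verified both conditions, Theorem~\ref{thm: converse decomposition} gives that $\mathbf{A}_{\mathbb{B}} = \langle A, \land, \lor, \lnot, 0, 1, J_2\rangle$, with $J_2 a = p_{1i}^{-1}(a) \in [0, a_i]$ for $a \in A_i$, is a Bochvar algebra whose $\class{IBSL}$-reduct is $\PL(\mathbb{A}_{\mathbb{B}})$. It remains to prove uniqueness of the expansion. Suppose $\mathbf{A}' = \langle A, \land, \lor, \lnot, 0, 1, J'\rangle$ is another Bochvar algebra on the same $\class{IBSL}$-reduct. By Lemma~\ref{lemma: omomorfismi sono suriettivi}(3) applied to $\mathbf{A}'$, for $a \in A_i$ we have $J' a \in p_{1i}^{-1}(a)$ and $J'_0 a \in p_{1i}^{-1}(\lnot a)$, where $J'_0 a = J'\lnot a$; and since $\mathbf{A}'$ is a Bochvar algebra, $J' a$ lies in the bottom fibre $\mathbf{A}_1$ (as $J_2$ outputs Boolean values: identities~\eqref{2BCA:14} and~\eqref{2BCA:13} force $J'a$ into the Boolean part). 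The congruence class $p_{1i}^{-1}(a)$ meets $\mathbf{A}_1$ in exactly the principal-filter coset, and the element of $[0, a_i] = [0, \lnot i]$ in that coset is unique. Hence $J'a$ is determined: it must equal the unique representative in $[0, \lnot i]$, which is exactly $J_2 a$. Therefore $J' = J_2$ and the expansion is unique. The main obstacle, and the step requiring the most care, is pinning down the correspondence between the order on $\mathbf{I}^\partial$, the order on $\mathbf{B}$, and the generators $a_i = \lnot i$ so that the strict-inequality clause of Theorem~\ref{thm: converse decomposition} is matched with the correct orientation; the rest is a routine application of the cited results.
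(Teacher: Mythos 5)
Your overall strategy is the paper's: check that $\PL(\mathbb{A}_{\mathbb{B}})$ satisfies the hypotheses of the representation theorem (you use Theorem \ref{thm: converse decomposition}; the paper runs the same check through conditions (1)--(2) of Theorem \ref{excorollario21}, namely that $1/\ker p_{1i}$ is a principal filter), and then derive uniqueness of the expansion from the forced value $J_2a = p_{1i}^{-1}(a)$. However, there is a genuine error at the key step: your choice of generators. You set $a_i := \lnot i$ and assert, as a ``standard fact'', that $b \mapsto b/[i)$ is an isomorphism from $[0,\lnot i]$ onto $\mathbf{B}/[i)$ because every congruence class modulo $[i)$ has a unique representative below $\lnot i$. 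This is false. For the congruence $\theta_{[i)}$ one has $a\,\theta_{[i)}\,b$ iff $a \land i = b \land i$; hence the whole interval $[0,\lnot i]$ is a single congruence class (the class of $0$), and the restriction of the quotient map to $[0,\lnot i]$ is constant --- the opposite of injective unless $\mathbf{A}_i$ is trivial. The unique representatives live in $[0,i]$, not $[0,\lnot i]$: the correct generator is $a_i := i$ itself, so that $p_{1i}\colon [0,i] \to \mathbf{B}/[i)$ is an isomorphism, equivalently $1/\ker p_{1i} = [i)$ is the principal filter generated by $i$ --- which is exactly what the paper verifies.

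Your own order check already exposed the problem: with $a_i = \lnot i$ you obtained $a_i <_{\mathbf{B}} a_j$ whenever $i <_{\mathbf{I}^\partial} j$, while Theorem \ref{thm: converse decomposition} requires $a_j < a_i$ for $i < j$; declaring this a match was a slip --- the inequality points the wrong way, so condition (2) fails outright for your $a_i$. With $a_i = i$ everything aligns: $i <_{\mathbf{I}^\partial} j$ gives $j <_{\mathbf{B}} i$, i.e.\ $a_j < a_i$, and distinctness is immediate. The same substitution of $[0,i]$ for $[0,\lnot i]$ repairs your uniqueness argument, which is otherwise in the spirit of the paper's: any expansion must send $a \in A_i$ into the bottom fibre (excluded middle), to a $p_{1i}$-preimage of $a$ (Lemma \ref{lemma: omomorfismi sono suriettivi}(3)), and the preimage lying in $[0,a_i]$ is unique. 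Note finally that if you preferred to argue via Theorem \ref{excorollario21}, you would also need to check that $\PL(\mathbb{A}_{\mathbb{B}})$ lies in $\class{SIBSL}$ (at most one trivial fibre), a point the paper verifies explicitly; routing through Theorem \ref{thm: converse decomposition} as you do spares you that check, but not the correct identification of the $a_i$.
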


\begin{proof}
    We use the equivalence between Conditions (1) and (2) in Theorem \ref{excorollario21}. First of all, we must show that $\PL(\mathbb{A}_{\mathbb{B}}) \in \class{SIBSL}$. As already remarked, $\PL(\mathbb{A}_{\mathbb{B}}) \in \class{IBSL}$; we must show that it has at most one trivial fibre. However, if $0^\mathbf{B} \in I$, $\mathbf{B}/[0^\mathbf{B})$ will be the unique trivial fibre in $\PL(\mathbb{A}_{\mathbb{B}})$; otherwise, $\PL(\mathbb{A}_{\mathbb{B}})$ will have no trivial fibre, because all the algebras in $\{\mathbf{A}_{i}\}_{i\in I}$ are quotients of $\B$ modulo congruences that are strictly less than $\nabla = B \times B$.

    Further, observe that for all $i \in I$,
    \begin{eqnarray*}
   ker_{p_{1i}} & = &\{ \langle a,b\rangle \in B^2 : p_{1i}(a) = p_{1i}(b) \} \\
    &  = & \{ \langle a,b\rangle \in B^2 : a/[i) = b/[i) \}
    \end{eqnarray*}
    so $1/ker_{p_{1i}} = \{ a \in B : \langle a,1\rangle \in ker_{p_{1i}}\} = \{ a \in B : a \in 1/[i) \} = [i)$, which is a principal filter of $\B$ with generator $i$. Moreover, if $i <_{\mathbf{I}^\partial} j$, then clearly $j <_{\mathbf{I}} i$ and thus $j <_{\mathbf{B}} i$, as $\mathbf{I}$ is a subsemilattice of $\mathbf{B}$.

    Uniqueness follows from the observation that if $a/[i) \in \A_i$, then $J_2(a/[i)) = p^{-1}_{1i}(a/[i))$, which is uniquely determined by Condition (3) in Theorem \ref{excorollario21}.
\end{proof}
After canonically associating a Bochvar algebra to a given Bochvar system, we now proceed in the opposite direction. Let 
\[
\A = \langle A, \land, \lor, \lnot, J_2, 0, 1\rangle
\]
be a Bochvar algebra, whose involutive bisemilattice reduct decomposes as $\PL(\A_i)_{i \in I}$. We define $\mathbb{B}_\A := \langle \A_{i_0}, \mathbf{K} \rangle$, where $K = \{ J_2^\A(1^{A_i}) : i \in I \}$, and for $J_2^\A(1^{A_i}),J_2^\A(1^{A_j}) \in K$, $J_2^\A(1^{A_i}) \leq_{\mathbf{K}} J_2^\A(1^{A_j})$ iff $j\leq_{\mathbf{I}} i$.

\begin{theorem}\label{bocciavera}
    $\mathbb{B}_\A$ is a Bochvar system.
\end{theorem}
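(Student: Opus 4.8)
The plan is to check the two defining clauses of a Bochvar system for $\mathbb{B}_\A = \langle \A_{i_0},\mathbf{K}\rangle$. The first clause — that $\A_{i_0}$ is a Boolean algebra — is immediate: by definition the P\l onka decomposition $\PL(\A_i)_{i\in I}$ of the $\class{IBSL}$-reduct of $\A$ is a P\l onka sum of Boolean algebras, so every fibre, and in particular the bottom fibre $\A_{i_0}$, is one; moreover $\A_{i_0}$ is a subalgebra of $\PL(\A_i)_{i\in I}$, so meets of elements of $A_{i_0}$ agree whether computed in $\A$ or in $\A_{i_0}$. The substance of the proof is the second clause.

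For that, I would first record three facts about the elements $\Jdue^\A(1^{A_i})$ that make up $K$. (i) Each lies in $A_{i_0}$: by Lemma \ref{lemma: omomorfismi sono suriettivi}(3), $\Jdue^\A(1^{A_i})\in p_{i_0 i}^{-1}(1^{A_i})\subseteq A_{i_0}$. (ii) $\Jdue^\A(1^{A_{i_0}}) = 1^\A$, the top of $\A_{i_0}$, because $p_{i_0 i_0}$ is the identity and hence $p_{i_0 i_0}^{-1}(1^{A_{i_0}}) = \{1^{A_{i_0}}\}$. (iii) $\Jdue^\A(1^{A_i})$ coincides with the generator $a_i$ of the principal filter $1/\ker p_{i_0 i}$ provided by Theorem \ref{excorollario21}; this requires identifying the given operation $\Jdue$ with the one reconstructed in Theorems \ref{thm: converse decomposition}--\ref{excorollario21}, using Lemma \ref{lemma: omomorfismi sono suriettivi}. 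Fact (iii) in particular shows that $i\mapsto\Jdue^\A(1^{A_i})$ is injective, which is exactly what makes the relation $\leq_\mathbf{K}$ in the definition of $\mathbb{B}_\A$ well defined.

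The key step is then closure of $K$ under $\land^{\A_{i_0}}$, and here I would use the identity $\Jdue(\varphi\land\psi)\thickapprox\Jdue\varphi\land\Jdue\psi$, valid in $\class{BCA}$ as a consequence of axiom (\ref{BCA:17}) of Definition \ref{def: algebre di Bochvar} together with $\Jzero a = \Jdue\lnot a$ and De Morgan. Given $i,j\in I$ and $k = i\vee j$ in the index semilattice, the P\l onka operation \eqref{eq: operazioni} and the fact that transition maps preserve the top give $1^{A_i}\land^\A 1^{A_j} = p_{ik}(1^{A_i})\land^{A_k}p_{jk}(1^{A_j}) = 1^{A_k}$, whence
\[
\Jdue^\A(1^{A_i})\land^{\A_{i_0}}\Jdue^\A(1^{A_j}) = \Jdue^\A\!\bigl(1^{A_i}\land^\A 1^{A_j}\bigr) = \Jdue^\A(1^{A_k})\in K .
\]
With fact (ii) this yields that $\langle K,\land^{\A_{i_0}},1^\A\rangle$ is a meet-subsemilattice with unit of $\A_{i_0}$; and reading the displayed computation together with the injectivity from (iii) shows that its induced order is precisely the declared $\leq_\mathbf{K}$, since $\Jdue^\A(1^{A_i})\leq\Jdue^\A(1^{A_j})$ iff $i\vee j = i$ iff $j\leq_\mathbf{I} i$. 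The only steps that are not pure bookkeeping with the P\l onka decomposition are the meet-homomorphism property of $\Jdue$ and the identification in (iii) of $\Jdue^\A(1^{A_i})$ with the distinguished filter generators $a_i$; I expect the latter to be the main obstacle, as it is what simultaneously delivers injectivity and ties $K$ to the structure theory recalled in Section \ref{prello}.
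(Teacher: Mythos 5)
Your proof is correct, and the verification of the only substantive clause (closure of $K$ under meets) goes through a slightly different identity than the paper's. The paper works with axiom (\ref{BCA:18}) of Definition \ref{def: algebre di Bochvar}: it expands $\Jdue(1^{A_i}\lor^{\A}1^{A_j})=\Jdue(1^{A_{i\lor j}})$ into the three-term join and then collapses it to $\Jdue(1^{A_i})\land\Jdue(1^{A_j})$ using the absorption $\Jdue(0^{A_j})\leq\Jdue(1^{A_j})$. You instead derive the multiplicativity of $\Jdue$ over $\land$ from axiom (\ref{BCA:17}) via $\Jzero=\Jdue\lnot$, De Morgan and involution, and combine it with $1^{A_i}\land^{\A}1^{A_j}=1^{A_{i\lor j}}$; this is a clean and perfectly valid alternative (the identity $\Jdue(x\land y)\thickapprox\Jdue x\land\Jdue y$ does hold in $\alg{WK}^e$, hence in $\class{BCA}$), and it avoids the absorption step. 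The remaining ingredients -- $K\subseteq A_{i_0}$ via Lemma \ref{lemma: omomorfismi sono suriettivi}(3), $1\in K$, and the agreement of $\land^{\A}$ with $\land^{\A_{i_0}}$ on the bottom fibre -- match the paper's argument. Your extra item (iii) goes beyond what the paper proves here: the paper's proof of Theorem \ref{bocciavera} never addresses injectivity of $i\mapsto\Jdue^{\A}(1^{A_i})$ or the declared order $\leq_{\mathbf K}$ (these are only used, tacitly, in Theorems \ref{montaldo} and \ref{pinna}). Be aware, though, that your justification of (iii) is thinner than you suggest: Lemma \ref{lemma: omomorfismi sono suriettivi}(3) only places $\Jdue^{\A}(1^{A_i})$ in the preimage filter $1/ker_{p_{i_0i}}=[a_i)$, not at its generator $a_i$; identifying the two (equivalently, that $\Jdue a$ is the \emph{least} preimage) needs the finer structure theory behind Theorems \ref{thm: converse decomposition} and \ref{excorollario21} from \cite{SMikBochvar}. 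Since the definition of a Bochvar system only asks that $\mathbf K$ be a meet-subsemilattice with unit of $\A_{i_0}$, this does not affect the correctness of your proof of the stated theorem.
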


\begin{proof}
    $\A_{i_0}$ is a Boolean algebra. $K$ is a subset of $B=A_{i_0}$, by Lemma \ref{lemma: omomorfismi sono suriettivi}. It contains $1^\mathbf{B} = 1^{\mathbf{A}_{i_0}} = J_2^\A(1^{\mathbf{A}_{i_0}})$. We now show that it is closed under meets in $\mathbf{B}$. Using Definition \ref{def: algebre di Bochvar}.(18) and the fact that for all $i \in I$ we have $J_2^\A(0^{A_i}) \leq^\A J_2^\A(1^{A_i})$,

    \begin{eqnarray*}
        J_2^\A(1^{A_i}) \land^{\B} J_2^\A(1^{A_j})&=&J_2^\A(1^{A_i}) \land^{\A_{i_0}} J_2^\A(1^{A_j}) \\
        &=&(J_2^\A(1^{A_i}) \land^{\A_{i_0}} J_2^\A(1^{A_j})) \\
        &&\lor^\A (J_2^\A(1^{A_i}) \land^{\A_{i_0}} J_2^\A(0^{A_j})) \\
        &&\lor^\A (J_2^\A(0^{A_i}) \land^{\A_{i_0}} J_2^\A(1^{A_j})) \\
        &=& J_2^\A(1^{A_i} \lor^\A 1^{A_j})) \\
        &=& J_2^\A(1^{A_{i \lor^\I j}}). \\
    \end{eqnarray*}
    
\end{proof}

The next goal in our agenda is showing that the previous correspondences between Bochvar algebras and Bochvar systems are mutually inverse (modulo isomorphism).

\begin{theorem}\label{montaldo}
    If $\A$ is a Bochvar algebra, then $\mathbf{A}_{\mathbb{B}_\A}$ is isomorphic to $\A$.
\end{theorem}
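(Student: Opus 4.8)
The plan is to trace an arbitrary element of $\A$ through both constructions and exhibit a bijection that respects all operations. Start from a Bochvar algebra $\A$ with $\class{IBSL}$-reduct decomposing as $\PL(\A_i)_{i\in I}$, bottom fibre $\A_{i_0} = \B$. By Theorem \ref{bocciavera}, $\mathbb{B}_\A = \langle \B, \mathbf{K}\rangle$ with $K = \{J_2^\A(1^{A_i}) : i \in I\}$. Now build $\mathbf{A}_{\mathbb{B}_\A}$ as in Theorem \ref{merendina}: its fibres are the quotients $\B/[k)$ for $k \in K$, and its $\class{IBSL}$-reduct is the P\l onka sum over $\mathbf{K}^\partial$. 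The natural candidate map $\Phi\colon \A \to \mathbf{A}_{\mathbb{B}_\A}$ sends $a \in A_i$ to the class $J_2^\A(a)/[k_i)$ of the element $J_2^\A(a) \in \B$ modulo the filter $[k_i)$, where $k_i := J_2^\A(1^{A_i})$. The key input here is Lemma \ref{lemma: omomorfismi sono suriettivi}(3) together with Theorem \ref{excorollario21}: for each $i$ the map $p_{i_0 i}$ restricts to an isomorphism $[\mathbf 0, \mathbf k_i] \to \A_i$, and $J_2^\A$ is precisely its inverse, so $J_2^\A$ identifies $A_i$ with the interval $[0,k_i]$ of $\B$, which in turn is in bijection with $\B/[k_i)$ via $a \mapsto a/[k_i)$ (the bottom of the quotient is represented by exactly one element of the interval $[0,k_i]$).

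The main steps, in order, are: (i) check $\Phi$ is well-defined, i.e. that the fibre indices match up — this amounts to the bijection $I \to K$, $i \mapsto k_i$, being order-reversing so that $\mathbf{K}$ is (isomorphic to the) meet-semilattice indexing the new system, which is exactly how $\leq_{\mathbf K}$ was defined in the paragraph before Theorem \ref{bocciavera}; (ii) check $\Phi$ is a bijection fibrewise, using that $a \mapsto J_2^\A(a)$ is a bijection $A_i \to [0,k_i]$ (Theorem \ref{excorollario21}(3)) composed with the bijection $[0,k_i] \to \B/[k_i)$; (iii) check $\Phi$ commutes with $\lnot, \land, \lor$; (iv) check $\Phi$ commutes with $J_2$ and with the constants $0,1$. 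For (iii), the cleanest route is to verify that $\Phi$ respects the P\l onka-sum data: it maps the fibre $\A_i$ isomorphically onto the fibre $\B/[k_i)$ (each fibre is Boolean, and $p_{i_0 i}^{-1} = J_2^\A$ restricted to $A_i$ is a Boolean isomorphism onto $[0,k_i]$, which is Boolean-isomorphic to $\B/[k_i)$), and that it intertwines the transition homomorphisms $p_{ij}$ of $\A$ with the transition homomorphisms of $\mathbf{A}_{\mathbb{B}_\A}$; once both the fibre isomorphisms and the commuting-square condition with the $p_{ij}$'s hold, equation \eqref{eq: operazioni} forces $\Phi$ to be a homomorphism for every term operation, in particular for $\land,\lor,\lnot$. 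For (iv), $J_2^{\mathbf{A}_{\mathbb{B}_\A}}$ on a fibre $\B/[k_i)$ is by construction $p_{i_0 i}^{-1}$ of the target system, i.e. it picks the representative in $[0,k_i]$; chasing definitions, $\Phi(J_2^\A(a)) = J_2^\A(J_2^\A(a))/[k_{i_0}) = J_2^\A(a)/[i_0^K)$ lands in the bottom fibre and equals $J_2^{\mathbf{A}_{\mathbb{B}_\A}}(\Phi(a))$, using idempotency of $J_2^\A$ (Definition \ref{def: algebre di Bochvar}(9)); the constants are handled by $\Phi(0^A) = \Phi(0^B) = 0^B/[k_{i_0})$ and similarly for $1$, both living in the bottom fibre $\B/[i_0^K) \cong \B$.

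I expect the main obstacle to be step (iii), specifically verifying that $\Phi$ intertwines the transition homomorphisms: one must show that for $i \leq_{\I^\partial} j$ (equivalently $k_j \leq_{\mathbf K} k_i$, equivalently $[k_i) \subseteq [k_j)$) the square relating $p_{ij}^{\A}$ to the canonical quotient map $\B/[k_i) \to \B/[k_j)$ commutes, i.e. $J_2^\A(p_{ij}^\A(a))/[k_j) = (J_2^\A(a)/[k_i))/[k_j)$ for all $a \in A_i$. The right-hand side is just $J_2^\A(a)/[k_j)$ by the General Homomorphism Theorem, so what is needed is $J_2^\A(p_{ij}^\A(a)) \equiv J_2^\A(a) \pmod{[k_j)}$, i.e. that these two elements of $[0,k_i]$ have the same image in $\B/[k_j)$. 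This should follow from the compatibility of the P\l onka homomorphisms with the interval isomorphisms: $p_{ij}^\A$ corresponds, under the identifications $\A_i \cong [0,k_i]$ and $\A_j \cong [0,k_j]$ furnished by $J_2^\A$, to the Boolean homomorphism $[0,k_i] \to [0,k_j]$, $x \mapsto x \land k_j$, and $x \land k_j \equiv x \pmod{[k_j)}$ holds in any Boolean algebra; the point to nail down carefully is that the restricted P\l onka map really is $x \mapsto x \wedge k_j$, which one extracts from the P\l onka-sum description of $p_{ij}$ in an involutive bisemilattice recalled after Definition \ref{def: IBSL} (namely $p_{ij}(a) = a \land (a \lor b)$ for $b$ in the higher fibre) together with $J_2^\A$ being a homomorphism. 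Once this commuting-square fact is in hand, everything else is bookkeeping.
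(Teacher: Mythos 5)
Your proposal is correct and follows essentially the same route as the paper: you define the same fibrewise map $a \mapsto J_2^{\A}a/[J_2^{\A}(1^{A_i}))$, invoke Condition (3) of Theorem \ref{excorollario21} for the fibre isomorphisms, match the transition maps with the canonical quotient maps, and verify $J_2$ via the construction in Theorem \ref{thm: converse decomposition}. The intertwining of the transition homomorphisms that you flag as the main obstacle is exactly the step the paper handles (rather tersely) by ``identifying $a$ with $f(a)$'', and your justification of it is sound.
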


\begin{proof}
    Let $\A = \langle A, \land, \lor, \lnot, J_2, 0, 1\rangle$ be a Bochvar algebra, whose involutive bisemilattice reduct decomposes as $\PL(\A_i)_{i \in I}$. Recall that $\mathbb{B}_\A = \langle \A_{i_0}, \mathbf{K}\rangle$, where $K = \{ J_2^\A(1^{A_i}) : i \in I \}$. However, as a set of indices, we may as well use its bijective copy $I$, replacing when convenient each $J_2^\A(1^{A_i}) $ by $i$. Thus, taking into account the fact that the order of $\mathbf{K}$ dualises the order of $\mathbf{I}$, the P\l onka sum representation of the involutive bisemilattice reduct of $\mathbf{A}_{\mathbb{B}_\A}$ is
\[
\left\langle \{\mathbf{A}_{i}^*\}_{i\in I},\mathbf{I},\{p_{ij}^*%
:i\leq_{\mathbf{I}}j\}\right\rangle,
\]
where $\A_i^*= \A_{i_0}/[i)= \A_{i_0}/[J_2^\A(1^{A_i}))$ and for all $a \in A_{i_0},p_{ij}^*(a/[J_2^\A(1^{A_i})))=(a/[J_2^\A(1^{A_i})))/[J_2^\A(1^{A_j}))$.

To prove that $\mathbf{A}_{\mathbb{B}_\A}$ and $\A$ have isomorphic involutive bisemilattice reducts, it suffices to show that for all $i,j \in I$, $\mathbf{A}_{i}^* \cong \mathbf{A}_{i}$ and, up to isomorphism,
$p_{ij}^* = p_{ij}$. However, using Condition (3) in Theorem \ref{excorollario21}, $f(a) := J_2^\A a/[i)$ is an isomorphism between $\mathbf{A}_{i}$ and $\mathbf{A}_{i_0}/(1/ker_{p_{1i}})$ for every $i \in I$. So:
\[
\mathbf{A}_{i} \cong \mathbf{A}_{i_0}/(1/ker_{p_{1i}}) = \mathbf{A}_{i_0}/[i) = \mathbf{A}_{i}^*.
\]
Moreover, let $a \in A_i$. Identifying $a$ with $f(a)$,
\begin{eqnarray*}
    p_{ij}(J_2^\A a/[i))&=&(J_2^\A a/[i))/[j) \\
    &=&p_{ij}^*(J_2^\A a/[i)).
\end{eqnarray*} 
To round off our proof, it remains to be established that the term operations realised by $J_2$ in both algebras coincide. Indeed, applying Theorem \ref{thm: converse decomposition} and working again up to isomorphism:
\[
J_2^{\mathbf{A}_{\mathbb{B}_\A}} a=J_2^{\mathbf{A}_{\mathbb{B}_\A}}(J_2^\A a/[i)) = p_{i_0i}^{-1}(J_2^\A a/[i))=J_2^{\A}(J_2^\A a/[i))= J_2^{\A} a.
\]
\end{proof}
As implied by the definition of homomorphism to be found below, two Bochvar systems $\mathbb{B}_1 = \langle \mathbf{B}_1, \mathbf{I}_1\rangle$ and $\mathbb{B}_2 = \langle \mathbf{B}_2, \mathbf{I}_2\rangle$ are said to be isomorphic when there exists an isomorphism $g$ from $\mathbf{B}_1$ to $\mathbf{B}_2$ such that $g(i) \in I_2$ whenever $i \in I_1$.

\begin{theorem}\label{pinna}
If $\mathbb{B}= \langle \mathbf{B}, \mathbf{I}\rangle$ is a Bochvar system, then $\mathbb{B}$ is isomorphic to $\mathbb{B}_{\A_{\mathbb{B}}}$.
\end{theorem}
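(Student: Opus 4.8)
The plan is to verify that the canonical quotient map $g\colon\mathbf{B}\to(\mathbf{A}_{\mathbb{B}})_{i_{0}}$, $g(b)=b/[i_{0})$, witnesses an isomorphism of Bochvar systems between $\mathbb{B}$ and $\mathbb{B}_{\A_{\mathbb{B}}}$. First I would pin down the distinguished fibre of $\mathbb{A}_{\mathbb{B}}$: since $\mathbf{I}$ is a meet-subsemilattice \emph{with unit} of $\mathbf{B}$, its unit is $1^{\mathbf{B}}$, so $1^{\mathbf{B}}\in I$, and, being the bottom element of the dual join-semilattice $\mathbf{I}^{\partial}$, it is the index $i_{0}$. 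Consequently $(\mathbf{A}_{\mathbb{B}})_{i_{0}}=\mathbf{B}/[1^{\mathbf{B}})$; as $[1^{\mathbf{B}})=\{1^{\mathbf{B}}\}$, the congruence $\theta_{[1^{\mathbf{B}})}$ is the identity on $\mathbf{B}$, and hence $g$ is an isomorphism of Boolean algebras.

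The heart of the argument is to show that $g[I]$ equals the universe $K=\{\Jdue^{\mathbf{A}_{\mathbb{B}}}(1^{A_{i}}):i\in I\}$ of the semilattice component of $\mathbb{B}_{\A_{\mathbb{B}}}$. Fixing $i\in I$, I would reuse the computation from the proof of Theorem \ref{merendina}, according to which $1/ker_{p_{i_{0}i}}$ is the principal filter of $(\mathbf{A}_{\mathbb{B}})_{i_{0}}$ generated by $g(i)$; in the notation of Theorem \ref{excorollario21}.(3), this says that the element $a_{i}$ for which $p_{i_{0}i}\colon[0,a_{i}]\to\mathbf{A}_{i}$ is an isomorphism is precisely $g(i)$. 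Invoking then the defining clause $\Jdue a=p_{i_{0}i}^{-1}(a)\in[0,a_{i}]$ of Theorem \ref{thm: converse decomposition} together with the fact that $1^{A_{i}}$ is the top element of $\mathbf{A}_{i}$, one obtains $\Jdue^{\mathbf{A}_{\mathbb{B}}}(1^{A_{i}})=p_{i_{0}i}^{-1}(1^{A_{i}})=a_{i}=g(i)$. Letting $i$ range over $I$ yields $K=g[I]$.

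It then remains to see that $g$ transports the semilattice structure of $\mathbf{I}$ onto that of $\mathbf{K}$. Since the meet and unit of $\mathbf{I}$ are inherited from $\mathbf{B}$, since by Theorem \ref{bocciavera} the meet and unit of $\mathbf{K}$ are inherited from $(\mathbf{A}_{\mathbb{B}})_{i_{0}}$, and since $g$ is a Boolean isomorphism with $g[I]=K$, the restriction of $g$ to $I$ will be an isomorphism of meet-semilattices with unit from $\mathbf{I}$ onto $\mathbf{K}$; hence $g$ is an isomorphism of Bochvar systems, as required. I expect the main obstacle to be bookkeeping rather than substance: one must carefully track the passage through the quotient $\mathbf{B}/[1^{\mathbf{B}})$ and the dualisation $\mathbf{I}\rightsquigarrow\mathbf{I}^{\partial}$, and, above all, the key fact that $\Jdue$ evaluated at the top element $1^{A_{i}}$ of the fibre indexed by $i$ returns exactly the filter generator $g(i)$ — which is precisely what Theorems \ref{thm: converse decomposition} and \ref{excorollario21} were set up to deliver.
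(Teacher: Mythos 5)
Your proposal is correct and follows essentially the same route as the paper's proof: identify the bottom fibre $(\mathbf{A}_{\mathbb{B}})_{i_0}=\mathbf{B}/[1^{\mathbf{B}})\cong\mathbf{B}$, and match $I$ with $K$ via the fact that $J_2^{\A_{\mathbb{B}}}(1^{A_i})$ is exactly the filter generator corresponding to $i$, as guaranteed by Theorems \ref{thm: converse decomposition} and \ref{excorollario21}. The only (harmless) difference is organisational: where the paper verifies meet-preservation by directly computing $f(i\land^{\mathbf{I}}j)=J_2(1^{A_{i\lor j}})=f(i)\land^{\mathbf{K}}f(j)$, you transport the semilattice structure along the Boolean isomorphism, relying on Theorem \ref{bocciavera} for the fact that the meet of $\mathbf{K}$ is the restricted Boolean meet — the same ingredient the paper uses.
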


\begin{proof}
    Recall that to $\mathbb{B}= \langle \mathbf{B}, \mathbf{I}\rangle$ is associated a unique Bochvar algebra $\A_{\mathbb{B}}$ over the semilattice direct system of Boolean algebras
    \[
\mathbb{A}_{\mathbb{B}} = \left\langle \{\mathbf{A}_{i}\}_{i\in I},\mathbf{I}^\partial,\{p_{ij}%
:i\leq_{\mathbf{I}^\partial}j\}\right\rangle,
    \]
    defined before Lemma \ref{somatizzo}, so that $\mathbb{B}_{\A_{\mathbb{B}}} = \langle \A_{1^\B}, \mathbf{K}\rangle$, where $K = \{ J_2^{\A_{\mathbb{B}}}(1^{\A_{\mathbb{B}_i}}) : i \in I\}$ and $J_2^{\A_{\mathbb{B}}}(1^{\A_{\mathbb{B}_i}}) \leq_\mathbf{K} J_2^{\A_{\mathbb{B}}}(1^{\A_{\mathbb{B}_j}})$ iff $j \leq_{\mathbf{I}^\partial}i$ iff $i \leq_\mathbf{I} j$. 

    Firstly, we must show that $\B \cong \A_{1^\B}$, which is clear since $\A_{1^\B} = \B/[1^\B) \cong \B$. The other requirement is obvious. Actually, we even have $\mathbf{K} \cong \mathbf{I}$. Indeed, let $f: I \to K$ be defined by $f(i) = J_2^{\A_{\mathbb{B}}}(1^{\A_{\mathbb{B}_i}})$. $f$ is bijective by Theorem \ref{excorollario21}. Moreover,
    \begin{eqnarray*}
        f(i \land^\mathbf{I} j)&=&f(i \lor^{\mathbf{I}^\partial} j) \\
        &=&J_2^{\A_{\mathbb{B}}}(1^{\A_{\mathbb{B}_{i \lor j}}}) \\
        &=&J_2^{\A_{\mathbb{B}}}(1^{\A_{\mathbb{B}_{i}}}) \land^\mathbf{K} J_2^{\A_{\mathbb{B}}}(1^{\A_{\mathbb{B}_{j}}})\\
        &=&f(i) \land^\mathbf{K} f(j).
    \end{eqnarray*}
\end{proof}

In the following, $\mathfrak{B}$ will denote the algebraic category of Bochvar algebras. We now define a category $\mathfrak{S}$ whose objects are Bochvar systems. If $\mathbb{B}_1 = \langle \mathbf{B}_1, \mathbf{I}_1\rangle$ and $\mathbb{B}_2 = \langle \mathbf{B}_2, \mathbf{I}_2\rangle$ are objects in $\mathfrak{S}$, a morphism from $\mathbb{B}_1$ to $\mathbb{B}_2$ is a homomorphism $g$ from $\mathbf{B}_1$ to $\mathbf{B}_2$ such that $g(i) \in I_2$ for every $i \in I_1$. Observe that any such $g$ is also a homomorphism from $\mathbf{I}_1$ to $\mathbf{I}_2$. 

Let us briefly recall the notion of a \emph{comma category}. If $\mathfrak{A,B,C}$ are categories and $S,T$ are functors, respectively from $\mathfrak{A}$ to $\mathfrak{C}$ and from $\mathfrak{B}$ to $\mathfrak{C}$, called the \emph{source} and the \emph{target} functor, the comma category $S \downarrow T$ is defined as follows:
\begin{itemize}
    \item Its objects are triples $\langle A,B,f \rangle$, with $A$ an object in $\mathfrak{A}$, $B$ an object in $\mathfrak{B}$ and $f: S(A) \to T(B)$ a morphism in $\mathfrak{C}$.
    \item The morphisms from $\langle A,B,f \rangle$ to $\langle A',B',f' \rangle$ are pairs $\langle g,h \rangle$ where $g: A \to A'$ is a morphism in $\mathfrak{A}$, $h: B \to B'$ is a morphism in $\mathfrak{B}$ and the following diagram commutes:
    \begin{equation*}
\text{$\begin{CD} S(A)@>S(g) >> S(A')\\ @VV f V
@VVf'V\\ T(B) @>T(h) >> T(B')
\end{CD} $}
\end{equation*}

\end{itemize}

Now, consider the comma category $S \downarrow T$, where the source functor $S$ is the identity functor from the algebraic category $\mathfrak{Sem}$ of meet semilattices with unit to itself, and the target functor $T$ is the forgetful functor from the algebraic category $\mathfrak{Bool}$ of Boolean algebras to $\mathfrak{Sem}$. $\mathfrak{S}$ can be viewed as a full subcategory of $S \downarrow T$, whose objects are those $\langle A,B,f \rangle$ such that $f$ is injective (since $A$ must be a subreduct of $B$) and whose morphisms are the $S \downarrow T$-morphisms between such objects. Therefore, $\mathfrak{S}$ is well-defined.

We now define the map $\Gamma$ as follows:
\begin{itemize}
    \item If $\A$ is an object in $\mathfrak{B}$, let $\Gamma(\A) := \mathbb{B}_\A$.
    \item If $f: \A_1 \to \A_2$ is a morphism in $\mathfrak{B}$, let $\Gamma(f)$ be the restriction of $f$ to $\A_{1_{i_0}}$.
\end{itemize}
Similarly, we define the map $\Xi$ as follows:
\begin{itemize}
    \item If $\mathbb{B}$ is an object in $\mathfrak{S}$, let $\Xi(\mathbb{B}) := \mathbf{A}_\mathbb{B}$.
    \item If $g: \mathbb{B}_1 \to \mathbb{B}_2$ is a morphism in $\mathfrak{S}$, let $\Xi(g)$ be defined as follows: $\Xi(g)(a/[i)) := g(a)/[g(i))$.
\end{itemize}

\begin{lemma}\label{scateni}
    $\Gamma$ is a functor from $\mathfrak{B}$ to $\mathfrak{S}$.
\end{lemma}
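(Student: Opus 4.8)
The plan is to verify the two functoriality conditions for $\Gamma$: that it sends objects of $\mathfrak{B}$ to objects of $\mathfrak{S}$, that it sends morphisms to morphisms, and that it preserves identities and composition. The object part is already done: Theorem \ref{bocciavera} establishes that $\mathbb{B}_\A = \langle \A_{i_0}, \mathbf{K}\rangle$ is a Bochvar system for every Bochvar algebra $\A$. So the core of the proof is the morphism part.

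First I would show that $\Gamma(f)$ is a well-defined morphism in $\mathfrak{S}$. Let $f \colon \A_1 \to \A_2$ be a homomorphism of Bochvar algebras, with $\class{IBSL}$-reducts decomposing as $\PL(\A_{1,i})_{i \in I_1}$ and $\PL(\A_{2,j})_{j \in I_2}$. The key observation is that $f$ respects the P\l onka structure: since $J_2$ is a term operation that projects each element into the bottom fibre (for $a \in A_{1,i}$, $J_2^{\A_1} a = p_{i_0 i}^{-1}(a) \in A_{1, i_0}$ by Lemma \ref{lemma: omomorfismi sono suriettivi}), and $f$ commutes with $J_2$, we get $f(J_2^{\A_1} a) = J_2^{\A_2}(f(a))$, which lies in the bottom fibre $\A_{2, i_0'}$ of $\A_2$. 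In particular $f$ maps $A_{1,i_0}$ into $A_{2, i_0'}$, so the restriction $\Gamma(f) = f{\restriction}_{A_{1,i_0}}$ is a well-defined map between the carriers of the Boolean algebras $\A_{1,i_0}$ and $\A_{2, i_0'}$; being a restriction of a homomorphism to a subalgebra whose image lands in a subalgebra, it is a Boolean homomorphism. It then remains to check that $\Gamma(f)$ carries $K_1 = \{J_2^{\A_1}(1^{A_{1,i}}) : i \in I_1\}$ into $K_2 = \{J_2^{\A_2}(1^{A_{2,j}}) : j \in I_2\}$; but $\Gamma(f)(J_2^{\A_1}(1^{A_{1,i}})) = f(J_2^{\A_1}(1^{A_{1,i}})) = J_2^{\A_2}(f(1^{A_{1,i}}))$, and since $f$ maps the top element $1^{A_{1,i}}$ of a fibre to the top element of some fibre of $\A_2$ (again because $f$ is compatible with the semilattice decomposition — $1^{A_{1,i}}$ is characterised by $\class{IBSL}$-equations among elements of $A_{1,i}$, or alternatively $1^{A_{1,i}} = 1^{\A_1} \land (1^{\A_1} \lor a)$ for $a \in A_{1,i}$), this element lies in $K_2$.

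Next I would check functoriality proper. For the identity $\mathrm{id}_\A$, its restriction to $\A_{i_0}$ is the identity on $\A_{i_0} = \mathbf{B}_{\mathbb{B}_\A}$, which is the identity morphism on $\mathbb{B}_\A$ in $\mathfrak{S}$. For composition, if $f \colon \A_1 \to \A_2$ and $h \colon \A_2 \to \A_3$ are morphisms in $\mathfrak{B}$, then since composing the restrictions agrees with restricting the composite, $\Gamma(h \circ f) = (h \circ f){\restriction}_{A_{1,i_0}} = (h{\restriction}_{A_{2, i_0'}}) \circ (f{\restriction}_{A_{1,i_0}}) = \Gamma(h) \circ \Gamma(f)$, using the already-established fact that $f$ maps $A_{1,i_0}$ into $A_{2, i_0'}$ so that the restriction of $h$ relevant to the composite is indeed $h{\restriction}_{A_{2, i_0'}} = \Gamma(h)$.

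The main obstacle is the first step: rigorously justifying that a Bochvar algebra homomorphism $f$ respects the P\l onka decomposition, i.e. maps the bottom fibre into the bottom fibre and top elements of fibres to top elements of fibres. This is where the $J_2$ operation does the essential work — it is precisely the presence of $J_2$ (and the characterisation of the bottom fibre as the image of $J_2$, combined with $J_2 J_2 = J_2$ from Definition \ref{def: algebre di Bochvar}.(9)) that forces homomorphisms to be well-behaved with respect to the decomposition, something that is \emph{not} automatic for arbitrary homomorphisms of involutive bisemilattices. I would spell this out carefully: $a \in A_{1,i_0}$ iff $a = J_2^{\A_1} a$ (since on the bottom fibre $J_2$ is the identity, while off it $J_2$ strictly lowers into the bottom fibre), hence $f(a) = f(J_2^{\A_1} a) = J_2^{\A_2}(f(a))$, so $f(a) \in A_{2, i_0'}$; and similarly the top element of a fibre is characterised internally, so it is preserved. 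Everything else is bookkeeping.
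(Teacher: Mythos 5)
Your proposal is correct and follows essentially the same route as the paper: the object part is delegated to Theorem \ref{bocciavera}, and the morphism part reduces to showing that the restriction of $f$ lands in the bottom fibre, is a Boolean homomorphism, and carries $K_1$ into $K_2$ via $f$ commuting with $J_2$ and sending tops of fibres to tops of fibres. The only (harmless) difference is that you detect the bottom fibre as the set of fixed points of $J_2$ (via Lemma \ref{lemma: omomorfismi sono suriettivi}), whereas the paper uses the equation $a \lor \lnot a \approx 1$, preserved by $f$; your explicit check of the identity and composition laws is extra bookkeeping the paper leaves implicit.
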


\begin{proof}
    If $\A$ is an object in $\mathfrak{B}$, $\Gamma(\A)$ is an object in $\mathfrak{S}$ by Theorem \ref{bocciavera}. Next, we take care of morphisms. Let $\A_1,\A_2$ be Bochvar algebras (with the associated P\l onka representations of their involutive bisemilattice reducts), let $f$ be a homomorphism from $\A_1$ to $\A_2$, and let $a \in A_{1_{i_0}}$. Then
    \[
    \Gamma(f)(a) = f\restriction A_{1_{i_0}} (a) = f(a) \in A_2.
    \]
    However, if $a \in A_{1_{i_0}}$, $a \lor^{\A_1} \lnot^{\A_1} a = 1^{\A_1}$, and thus
    \[
    f(a) \lor^{\A_2} \lnot^{\A_2} f(a) = f(a \lor^{\A_1} \lnot^{\A_1} a) = f(1^{\A_1}) = 1^{\A_2}
    \]
    and hence $f(a) \in A_{2_{i_0}} $.
    Since $f$ is a homomorphism of Bochvar algebras, its restriction to $\A_{1_{i_0}}$ is a homomorphism of Boolean algebras. Moreover, for $i \in I_1$,
    \[
    \Gamma(f)(J_2^{\A_{1}}(1^{\A_{1_{i}}})) = f(J_2^{\A_{1}}(1^{\A_{1_{i}}})) = J_2^{\A_{2}}(f(1^{\A_{1_{i}}})) = J_2^{\A_{2}}(1^{\A_{2_{f^{\ast}(i)}}}),
    \]
    where $f^{\ast}$ is the homomorphism from $\I_1$ to $\I_2$ induced by $f$, viewed as a homomorphism of involutive bisemilattices.
\end{proof}

\begin{lemma}\label{fenu}
    $\Xi$ is a functor from $\mathfrak{S}$ to $\mathfrak{B}$.
\end{lemma}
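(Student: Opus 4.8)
The plan is to verify in turn that $\Xi$ sends objects of $\mathfrak{S}$ to objects of $\mathfrak{B}$, that it sends morphisms to morphisms, and that it preserves identities and composition; everything is straightforward bookkeeping except the claim that $\Xi(g)$ is a \emph{homomorphism of Bochvar algebras}, on which I will concentrate. The statement on objects is exactly Theorem~\ref{merendina}. So fix a morphism $g\colon \mathbb{B}_1 \to \mathbb{B}_2$ in $\mathfrak{S}$, write $\mathbb{B}_k = \langle \mathbf{B}_k, \mathbf{I}_k\rangle$, and recall that $\mathbf{A}_{\mathbb{B}_k}$ is the P\l onka sum of the fibres $\mathbf{B}_k/[i)$ ($i \in I_k$) over $\mathbf{I}_k^\partial$, with transition maps $p_{ij}\colon a/[i) \mapsto a/[j)$ for $j \leq_{\mathbf{B}_k} i$, and bottom fibre indexed by $i_0 = 1^{\mathbf{B}_k}$. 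First I would record two preliminary observations. Since $g$ is a Boolean homomorphism with $g(I_1)\subseteq I_2$ and $g(1^{\mathbf{B}_1}) = 1^{\mathbf{B}_2}$, and the semilattice operation of $\mathbf{I}_k$ is the Boolean meet, the restriction $g\restriction I_1$ is a lower-bounded join-semilattice homomorphism from $\mathbf{I}_1^\partial$ to $\mathbf{I}_2^\partial$. Also, $g$ is monotone, so $x \in [i)$ implies $g(x) \in [g(i))$; applying $g$ to $(\lnot a\lor b)\land(\lnot b\lor a)$ it follows that $a/[i) = b/[i)$ implies $g(a)/[g(i)) = g(b)/[g(i))$. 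This shows $\Xi(g)$ is a well-defined map $\mathbf{A}_{\mathbb{B}_1}\to \mathbf{A}_{\mathbb{B}_2}$: each element of $\mathbf{A}_{\mathbb{B}_1}$ lies in a unique fibre $\mathbf{B}_1/[i)$ with $i\in I_1$, we have $g(i)\in I_2$, and the assigned value is independent of the representative.

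For the connectives $\land,\lor,\lnot,0,1$ the cleanest route is to factor $\Xi(g)$ through the P\l onka-sum construction. For each $i\in I_1$, $\bar g_i(a/[i)):= g(a)/[g(i))$ is a well-defined Boolean homomorphism $\mathbf{B}_1/[i)\to \mathbf{B}_2/[g(i))$ (well-definedness being the computation just made, via the General Homomorphism Theorem; cf. the proof of Lemma~\ref{somatizzo}). The family $\{\bar g_i\}_{i\in I_1}$ sits over the semilattice homomorphism $g\restriction I_1\colon \mathbf{I}_1^\partial\to\mathbf{I}_2^\partial$ and is compatible with the transition maps, since for $i\leq_{\mathbf{I}_1^\partial} j$ both $p_{g(i)g(j)}\circ \bar g_i$ and $\bar g_j\circ p_{ij}$ send $a/[i)$ to $g(a)/[g(j))$. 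By the standard functoriality of P\l onka sums for morphisms of semilattice direct systems (see e.g. \cite{RomanowskaPlonka92} or \cite{Bonziobook}), the induced map $a\mapsto \bar g_i(a)$ for $a\in \mathbf{B}_1/[i)$ — which is precisely $\Xi(g)$ — is a homomorphism of involutive bisemilattices. (Alternatively this can be checked by hand; the only case needing attention is $\land$ and $\lor$, where one must see that applying $g$ fibrewise commutes with first pushing the two arguments along $p_{ik},p_{jk}$ to the fibre at $k=i\land^{\mathbf{B}_1}j$, using $g(i\land^{\mathbf{B}_1}j)=g(i)\land^{\mathbf{B}_2}g(j)$.)

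It remains to check that $\Xi(g)$ commutes with $J_2$, which is the one genuinely delicate point, since the value of $J_2$ is defined via the inverse of a transition map. By Condition~(3) of Theorem~\ref{excorollario21} together with Theorem~\ref{thm: converse decomposition}, and using that $1/\ker_{p_{i_0 i}} = [i)$ with generator $i$ (as in the proof of Theorem~\ref{merendina}), $p_{i_0 i}$ restricts to an isomorphism from the interval $[0^{\mathbf{B}_k}, i]$ of the bottom fibre $\mathbf{A}_{i_0}\cong\mathbf{B}_k$ onto $\mathbf{B}_k/[i)$; hence, for $a/[i)\in\mathbf{B}_1/[i)$,
\[
J_2^{\mathbf{A}_{\mathbb{B}_1}}(a/[i)) = p_{i_0 i}^{-1}(a/[i)) = (a\land^{\mathbf{B}_1} i)/[1^{\mathbf{B}_1}),
\]
and likewise in $\mathbf{A}_{\mathbb{B}_2}$. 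Consequently, using $g(1^{\mathbf{B}_1}) = 1^{\mathbf{B}_2}$,
\[
\Xi(g)\big(J_2^{\mathbf{A}_{\mathbb{B}_1}}(a/[i))\big) = g(a\land^{\mathbf{B}_1}i)/[1^{\mathbf{B}_2}) = \big(g(a)\land^{\mathbf{B}_2}g(i)\big)/[1^{\mathbf{B}_2}) = J_2^{\mathbf{A}_{\mathbb{B}_2}}\big(g(a)/[g(i))\big) = J_2^{\mathbf{A}_{\mathbb{B}_2}}\big(\Xi(g)(a/[i))\big).
\]
Thus $\Xi(g)$ is a Bochvar-algebra homomorphism.

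Finally, functoriality is immediate from the defining formula $\Xi(g)(a/[i)) = g(a)/[g(i))$: $\Xi(\mathrm{id}_{\mathbb{B}})$ fixes each $a/[i)$, and $\Xi(h\circ g)(a/[i)) = h(g(a))/[h(g(i))) = \Xi(h)\big(\Xi(g)(a/[i))\big)$. The main obstacle throughout is keeping the P\l onka-sum bookkeeping straight — specifically, that applying $g$ fibrewise respects the fibre-wise computation of $\land$ and $\lor$, and that it is compatible with the description of $J_2$ through the inverse transition map $p_{i_0 i}^{-1}$.
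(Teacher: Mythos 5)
Your proposal is correct and follows essentially the same route as the paper: the same definition of $\Xi$ on objects and morphisms, with the decisive verifications being the homomorphism clauses for the lattice operations and for $J_2$, the latter via the identification $1/\ker p_{1i}=[i)$ and $p_{1i}^{-1}(a/[i))=(a\land i)/[1)$. The only differences are cosmetic: you delegate the involutive-bisemilattice part to P\l onka-sum functoriality instead of computing the $\land$ clause directly, and you are more explicit than the paper about well-definedness (independence of the representative) and about the inverse transition map, which if anything tightens the argument.
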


\begin{proof}
  If $\mathbb{B}$ is an object in $\mathfrak{S}$, $\Xi(\mathbb{B})$ is an object in $\mathfrak{B}$ by Theorem \ref{merendina}. Let $\mathbb{B}_1 = \langle \mathbf{B}_1, \mathbf{I}_1\rangle,\mathbb{B}_2 = \langle \mathbf{B}_2, \mathbf{I}_2\rangle$ be Bochvar systems, let $g$ be a morphism from $\mathbb{B}_1$ to $\mathbb{B}_2$, and let $a/[i) \in A_{\mathbb{B}_1}$, where $a \in B_1$, $i \in I_1$. Then $g(a) \in B_2,g(i) \in I_2$, and thus $\Xi(g)(a/[i)) = g(a)/[g(i))$ is well-defined. We have to show that it is a homomorphism of Bochvar algebras. We check just the clauses for $\land$ and $J_2$. To keep the notation reasonably unencumbered, we will not add superscripts to the semilattice joins of $\I_1$ and $\I_2$, relying on the context to disambiguate.
  \begin{eqnarray*}
    \Xi(g)(a/[i) \land^{\mathbf{B}_1} b/[j))  &=&\Xi(g)(p_{i,i \lor j}(a/[i)) \land^{\mathbf{B}_{1_{i \lor j}}} p_{j,i \lor j}(b/[j)))\\
      &=&\Xi(g)((a/[i))/[i \lor j) \land^{\mathbf{B}_{1_{i \lor j}}} (b/[j))/[i \lor j))\\
      &=&\Xi(g)(a/[i \lor j) \land^{\mathbf{B}_{1_{i \lor j}}} b/[i \lor j))\\
      &=&\Xi(g)(a \land^{\mathbf{B}_{1}} b/[i \lor j) )\\
      &=&g(a \land^{\mathbf{B}_{1}} b)/[g(i \lor j))\\
      &=&(g(a) \land^{\mathbf{B}_{2}} g(b))/[g(i) \lor g(j))\\
      &=&g(a)/[g(i) \lor g(j)) \land^{\mathbf{B}_{2_{g(i) \lor g(j)}}} g(b)/[g(i) \lor g(j))\\
      &=&(g(a)/[g(i)) \land^{\mathbf{B}_{2}} g(b)/[g(j))\\
      &=&\Xi(g)(a/[i)) \land^{\mathbf{B}_{2}} \Xi(g)(b/[j)).
  \end{eqnarray*}
  \begin{eqnarray*}
    \Xi(g)(J_2^{\mathbf{B}_{1}}(a/[i)))  &=&\Xi(g)(p_{1,i}^{-1}(a/[i)))\\
      &=&\Xi(g)(a/\{ 1^{\mathbf{B}_{1}}\})\\
      &=&g(a)/[g(1^{\mathbf{B}_{1}}))\\
      &=&g(a)/\{ 1^{\mathbf{B}_{2}}\}\\
      &=&p_{1,g(i)}^{-1}(g(a)/[g(i)))\\
      &=&J_2^{\mathbf{B}_{2}}(g(a)/[g(i)))\\
      &=&J_2^{\mathbf{B}_{2}}(\Xi(g)(a/[i))).
  \end{eqnarray*}
\end{proof}

\begin{theorem}
    The functors $\Gamma$ and $\Xi$ induce a categorical equivalence between the categories $\mathfrak{B}$ and $\mathfrak{S}$.
\end{theorem}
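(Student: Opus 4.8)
The plan is to establish the equivalence by showing that $\Gamma \colon \mathfrak{B} \to \mathfrak{S}$ is full, faithful and essentially surjective; by the standard characterisation of categorical equivalences this makes $\Gamma$ an equivalence, with $\Xi$ as a quasi-inverse. Essential surjectivity will cost nothing: by Theorem~\ref{pinna}, every Bochvar system $\mathbb{B}$ satisfies $\mathbb{B} \cong \mathbb{B}_{\A_{\mathbb{B}}} = \Gamma(\Xi(\mathbb{B}))$, so every object of $\mathfrak{S}$ lies in the essential image of $\Gamma$. So the work is concentrated in faithfulness and fullness, and even there the conceptual content has already been supplied by Theorems~\ref{merendina}, \ref{montaldo} and~\ref{pinna}.

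For faithfulness I would take Bochvar homomorphisms $f, f' \colon \A_1 \to \A_2$ with $\Gamma(f) = \Gamma(f')$, i.e.\ such that $f$ and $f'$ agree on the bottom fibre $A_{1_{i_0}}$, and show $f = f'$ fibrewise. Fix $a \in A_1$ lying in the fibre $\A_{1_i}$. The key points are: (i) ``belonging to a common fibre'' is equationally expressible in a P\l onka sum of Boolean algebras (the observation following Definition~\ref{def: IBSL}), so $f$ and $f'$ send the whole fibre $\A_{1_i}$ into single fibres $\A_{2_j}$, $\A_{2_{j'}}$ respectively; (ii) since every fibre is a Boolean algebra, $1^{A_{2_j}} = f(a) \lor \lnot f(a) = f(1^{A_{1_i}})$, and applying the $J_2$-preserving $f$ yields $J_2^{\A_2}(1^{A_{2_j}}) = f(J_2^{\A_1}(1^{A_{1_i}})) = f'(J_2^{\A_1}(1^{A_{1_i}})) = J_2^{\A_2}(1^{A_{2_{j'}}})$, whence $j = j'$ by the distinct-generators clause of Theorem~\ref{excorollario21}; and (iii) the identity $x \approx J_2 x \lor (x \land \lnot x)$ holds in $\class{BCA}$ (it is immediate on $\WKt$, or can be read off the P\l onka decomposition), so $f(a) = f(J_2^{\A_1}a) \lor (f(a)\land\lnot f(a)) = f(J_2^{\A_1}a) \lor 0^{A_{2_j}}$, and symmetrically for $f'$. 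As $J_2^{\A_1}a$ lies in the bottom fibre, $f$ and $f'$ agree on it, hence $f(a) = f'(a)$; thus $f = f'$.

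For fullness I would argue through $\Xi$. Given a morphism $h \colon \mathbb{B}_{\A_1} \to \mathbb{B}_{\A_2}$ of $\mathfrak{S}$, Lemma~\ref{fenu} makes $\Xi(h) \colon \A_{\mathbb{B}_{\A_1}} \to \A_{\mathbb{B}_{\A_2}}$ a morphism of $\mathfrak{B}$; composing it with the explicit isomorphisms $\psi_\ell \colon \A_{\mathbb{B}_{\A_\ell}} \to \A_\ell$ produced in the proof of Theorem~\ref{montaldo} yields a Bochvar homomorphism $f := \psi_2 \circ \Xi(h) \circ \psi_1^{-1} \colon \A_1 \to \A_2$. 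It then remains only to check $\Gamma(f) = h$, and for this it suffices to restrict everything to bottom fibres: $\psi_\ell$ restricts there to the canonical isomorphism $\A_{\ell_{i_0}}/[1) \cong \A_{\ell_{i_0}}$ (because $J_2$ acts as the identity on a Boolean fibre), and $\Xi(h)$ restricts on the bottom fibre to $h$ itself, since $\Xi(h)(a/[1)) = h(a)/[h(1)) = h(a)/[1)$. Hence $\Gamma(f) = f \restriction A_{1_{i_0}}$ is, modulo these identifications, exactly $h$.

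Assembling the three properties gives the equivalence, the relevant natural isomorphisms $\Xi\Gamma \cong \mathrm{Id}_{\mathfrak{B}}$ and $\Gamma\Xi \cong \mathrm{Id}_{\mathfrak{S}}$ being those of Theorems~\ref{montaldo} and~\ref{pinna}. I do not expect a genuine obstacle: the substance is in the object-level correspondences already proved, and what remains is essentially a formal upgrade. The one place demanding care is the fullness step — keeping precise track of the isomorphisms $\psi_\ell$ and of the identifications of the various bottom fibres with the underlying Boolean algebras of the associated Bochvar systems; an alternative, equally viable route would construct $f$ from $h$ by hand (setting $f^{\ast}(i)$ to be the unique $j$ with $J_2^{\A_2}(1^{A_{2_j}}) = h(J_2^{\A_1}(1^{A_{1_i}}))$ and $f(a) := p_{i_0, f^{\ast}(i)}^{\A_2}(h(J_2^{\A_1}a))$) and then verify the homomorphism conditions, at the cost of duplicating the computations of Lemma~\ref{fenu}.
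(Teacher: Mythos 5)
Your proposal is correct, but it reaches the equivalence by a different route than the paper. The paper fixes the candidate unit/counit from Theorems \ref{montaldo} and \ref{pinna} and then verifies directly, by computation, that $\Xi(\Gamma(f))=f$ and $\Gamma(\Xi(g))=g$ modulo the identifications -- i.e.\ it checks the two naturality squares head-on, which together with Lemmas \ref{scateni} and \ref{fenu} gives mutually quasi-inverse functors. You instead prove that $\Gamma$ is full, faithful and essentially surjective. Essential surjectivity is indeed free from Theorem \ref{pinna}; your faithfulness argument is genuinely new content relative to the paper's proof (the identity $x\approx J_2x\lor(x\land\lnot x)$, the equational definability of ``same fibre'', and the distinct-generators clause of Theorem \ref{excorollario21} correctly force two homomorphisms agreeing on the bottom fibre to agree everywhere), while your fullness check via $\psi_2\circ\Xi(h)\circ\psi_1^{-1}$ is essentially the same computation the paper performs for $\Gamma(\Xi(g))=g$. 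What each approach buys: the paper's route is shorter because it never needs faithfulness as a separate statement, whereas yours isolates a reusable algebraic fact (a Bochvar homomorphism is determined by its restriction to the bottom fibre) and leans on the standard equivalence criterion. The one point to tighten is your closing sentence: object-level isomorphisms as in Theorems \ref{montaldo} and \ref{pinna} do not by themselves assemble into natural isomorphisms $\Xi\Gamma\cong\mathrm{Id}_{\mathfrak{B}}$ and $\Gamma\Xi\cong\mathrm{Id}_{\mathfrak{S}}$, so if you want $\Xi$ to be the quasi-inverse with precisely those components you should note that your fullness computation combined with faithfulness yields exactly the naturality of $\psi\colon\Xi\Gamma\Rightarrow\mathrm{Id}_{\mathfrak{B}}$ (apply it to $h=\Gamma(f_0)$ and cancel by faithfulness), after which naturality on the $\mathfrak{S}$ side follows since $\Gamma$ is fully faithful; this is a presentational gap, not a mathematical one.
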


\begin{proof}
    This results follows from Theorems \ref{montaldo} and \ref{pinna} and Lemmas \ref{scateni} and \ref{fenu} if we can prove that for any morphism $f:\A_1 \to \A_2$ in $\mathfrak{B}$ and any morphism $g:\mathbb{B}_1 \to \mathbb{B}_2$ in $\mathfrak{S}$, $\Xi(\Gamma(f)) = f$ and $\Gamma(\Xi(g)) = g$. We have to show the commutativity of the following diagram:
    \[
   \begin{tikzcd}
\A_1  \arrow{r}{f} \arrow{d}{\Xi\Gamma} & \A_2 \arrow{d}{\Xi\Gamma} \\
\Xi(\Gamma(\A_1)) \arrow{r}{\Xi\Gamma(f)} & \Xi(\Gamma(\A_2)) 
\end{tikzcd}
    \]
However, for $a \in \A_{1_i}$, 
\begin{eqnarray*}
  \Xi(\Gamma(f))(\Xi(\Gamma(a)))  &=&\Xi(\Gamma(f))(J_2 a/[i)) \\
    &=&f\restriction \A_{1_{i_0}} (J_2 a) / [f\restriction \A_{1_{i_0}}(i)) \\
    &=& J_2 f(a)/[f(i))\\
    &=&\Xi(\Gamma(f(a)). 
\end{eqnarray*}
The commutativity of the diagram
 \[
 \begin{tikzcd}
\mathbb{B}_1  \arrow{r}{g} \arrow{d}{\Gamma\Xi} & \mathbb{B}_2 \arrow{d}{\Gamma\Xi} \\
\Gamma(\Xi(\mathbb{B}_1)) \arrow{r}{\Gamma\Xi(g)} & \Gamma(\Xi(\mathbb{B}_2)) 
\end{tikzcd}
 \]
is shown similarly.
\end{proof}

\section{ On the variety generated by Bochvar algebras}\label{prellu}

In this section we investigate the variety generated by the proper quasivariety $\class{BCA}$. For a start, in Subsection \ref{acquacheta} we provide an equational basis for this variety and characterise the P\l onka sum representations of its members. Like $\class{IBSL}$, $V(\class{BCA})$ contains as subvarieties term-equivalent counterparts $\class{BA}$ of Boolean algebras and $\class{SL}$ of semilattices with zero -- in the former subvariety the operation symbol $J_2$ is interpreted as the identity function, while in the latter it is interpreted as the constant function yielding $0$ (or, equivalently, $1$) for each argument. In subsection \ref{gattamorta} we axiomatise the join of these (independent) varieties in the lattice of subvarieties of $V(\class{BCA})$, which coincides with the class of isomorphic copies of the product of a Boolean algebra and a semilattice.

\subsection{Axiomatisation and basic properties}\label{acquacheta}

\begin{definition}\label{def: equations VBCA}
$\class{K}$ is defined as the variety axiomatised by the following identities:
\begin{enumerate}[label=$\mathbf{K_{\arabic*}}$]
\item $\varphi\lor \varphi\thickapprox \varphi$; \label{K1}
\item $\varphi\lor\psi\thickapprox \psi\lor \varphi$; \label{K2}
\item $\varphi\lor(\psi\lor \delta)\thickapprox(\varphi\lor \psi)\lor \delta$; \label{K3}
\item $\lnot\lnot \varphi\thickapprox \varphi$; \label{K4}
\item $\varphi\land \psi\thickapprox\lnot(\lnot \varphi\lor\lnot \psi)$; \label{K5}
\item $\varphi\land(\lnot \varphi\lor \psi)\thickapprox \varphi\land \psi$; \label{K6}
\item $0\lor \varphi\thickapprox \varphi$; \label{K7}
\item $1\thickapprox\lnot 0$; \label{K8}
\item $\Jdue \varphi\lor\neg\Jdue\varphi\thickapprox 1$; \label{K9new}
\item $\varphi\lor J_{_{2}}\psi\thickapprox\varphi\lor J_{_{2}}(\varphi\lor\psi)$; \label{K10}
\item $\varphi\land J_{_{2}}\varphi\thickapprox \varphi$; \label{K11new}
\item $J_{_{2}}(\varphi\land\neg\varphi)\thickapprox 0$. \label{K12new}
\end{enumerate}
\end{definition}

We begin with a simple arithmetical lemma. 

 \begin{lemma}\label{lemma: aritmetica K}
 In $\class{K}$, the following holds:
 \begin{enumerate}
     \item $\varphi\lor J_{_{2}}\varphi\thickapprox \varphi$;
     \item $\varphi \thickapprox J_{_{2}}\varphi \lor (\varphi \land \lnot \varphi)$;
     \item $\Jdue \Jdue \varphi \thickapprox \Jdue \varphi$;
     \item $\varphi\lor \lnot J_{_{2}}\psi\thickapprox\varphi\lor \lnot J_{_{2}}(\varphi\lor\psi)$.
 \end{enumerate}
 \end{lemma}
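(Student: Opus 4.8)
The plan is to derive each identity from the twelve axioms \ref{K1}--\ref{K12new}, using earlier items in the list as soon as they are proven. I would treat the four parts in the given order, since part (1) is the workhorse for the rest.

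\medskip

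\noindent\textbf{Part (1).} To prove $\varphi \lor J_2\varphi \thickapprox \varphi$, I would start from \ref{K10} with the substitution $\psi := \varphi$, giving $\varphi \lor J_2\varphi \thickapprox \varphi \lor J_2(\varphi \lor \varphi)$, and then rewrite $\varphi \lor \varphi$ as $\varphi$ via idempotence \ref{K1}; this already collapses \ref{K10} to a tautology, so that is not the route. Instead the natural approach is: from \ref{K11new}, $\varphi \land J_2\varphi \thickapprox \varphi$, and then use the absorption-style manipulation available in any involutive bisemilattice. Since axioms \ref{K1}--\ref{K8} are exactly the $\class{IBSL}$ axioms of Definition \ref{def: IBSL}, $\class{K}$ has an $\class{IBSL}$-reduct, and in $\class{IBSL}$ one has $a \land b \thickapprox a$ together with the P\l onka-sum semantics; what I really want is the dual absorption $\varphi \lor J_2\varphi \thickapprox \varphi$, which should follow by applying De Morgan \ref{K5}/\ref{K4} and \ref{K11new} to $\lnot\varphi$: from $\lnot\varphi \land J_2(\lnot\varphi) \thickapprox \lnot\varphi$ one negates both sides and uses \ref{K5} to get $\varphi \lor \lnot J_2(\lnot\varphi) \thickapprox \varphi$. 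This is not literally the claim, so the cleanest path is probably to verify it directly on the P\l onka sum representation of members of $\class{IBSL}$ expanded by $J_2$: on each fibre $J_2$ acts and $\varphi \lor J_2\varphi$ lives in the join of the two index points, where \ref{K11new} forces the value to be $\varphi$. I expect this to be the main obstacle, precisely because $\class{K}$ is \emph{not} assumed to have the full Bochvar structure (no fixpoint-separation quasi-identity), so one cannot lean on Theorems \ref{thm: converse decomposition} or \ref{excorollario21}; the argument must stay purely equational. A safe fallback is to compute in $\class{IBSL}$ using the characterisation ``$a,b$ share a fibre iff $a \land (a \lor b) = a$ and $b \land (b \lor a) = b$'' together with \ref{K11new}, which says $\varphi$ and $J_2\varphi$ share a fibre; then $\varphi \lor J_2\varphi$ reduces inside that fibre, and \ref{K11new} pins it to $\varphi$.

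\medskip

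\noindent\textbf{Parts (2)--(4).} Once (1) is in hand, the rest are short. For (2), $\varphi \thickapprox J_2\varphi \lor (\varphi \land \lnot\varphi)$: expand the right side using \ref{K7}, \ref{K5}, and the $\class{IBSL}$ identity \ref{rmp}/\ref{K6}; in any involutive bisemilattice $a \thickapprox b \lor (a \land \lnot a)$ whenever $a$ and $b$ share a fibre with $b \leq a$ on that fibre, and \ref{K11new} says exactly $\varphi \land J_2\varphi \thickapprox \varphi$, i.e. $J_2\varphi \leq \varphi$ in the shared fibre, so the ``Boolean-plus-bottom'' decomposition applies. Alternatively, compute directly: $J_2\varphi \lor (\varphi \land \lnot\varphi) \thickapprox (J_2\varphi \lor \varphi) \land (J_2\varphi \lor \lnot\varphi)$ by distributivity \ref{K4} (available since $\class{IBSL}$ is distributive), use part (1) on the first conjunct, and \ref{K9new}-style reasoning on the second. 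For (3), $J_2 J_2\varphi \thickapprox J_2\varphi$: apply part (1) with $\varphi$ replaced by $J_2\varphi$ to get $J_2\varphi \lor J_2 J_2\varphi \thickapprox J_2\varphi$, and \ref{K11new} with $\varphi$ replaced by $J_2\varphi$ to get $J_2\varphi \land J_2 J_2\varphi \thickapprox J_2\varphi$; since in a distributive involutive bisemilattice $a \lor b \thickapprox a$ and $a \land b \thickapprox a$ together force $a \thickapprox b$ when $a,b$ lie in the same fibre (which the two displayed identities guarantee), conclude $J_2 J_2\varphi \thickapprox J_2\varphi$. For (4), $\varphi \lor \lnot J_2\psi \thickapprox \varphi \lor \lnot J_2(\varphi \lor \psi)$: this is the De Morgan dual of \ref{K10}; negate \ref{K10} using \ref{K4}/\ref{K5}, i.e. apply $\lnot$ to both sides of $\varphi \lor J_2\psi \thickapprox \varphi \lor J_2(\varphi \lor \psi)$ to get $\lnot\varphi \land \lnot J_2\psi \thickapprox \lnot\varphi \land \lnot J_2(\varphi \lor \psi)$, then substitute $\lnot\varphi$ for $\varphi$ throughout and apply \ref{K4}; finally join $\varphi$ on and simplify — more directly, take \ref{K10}, substitute $\psi \mapsto \lnot\psi$? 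No: the slickest route is to use that in $\class{IBSL}$ the map $\lnot$ is an anti-automorphism fixing each fibre, so $a \lor b \thickapprox a \lor c$ iff $\lnot a \land \lnot b \thickapprox \lnot a \land \lnot c$, and dualize accordingly. I would write (4) as: apply $\lnot$ to \ref{K10}, replace the resulting $\lnot J_2$ expressions using \ref{K4}, and one obtains precisely the meet-form of (4); then convert back.

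\medskip

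\noindent In short, Part (1) is the crux and is handled by the shared-fibre characterisation in $\class{IBSL}$ plus \ref{K11new}; Parts (2), (3), (4) then follow by routine equational manipulation using distributivity, De Morgan, and Part (1) itself.
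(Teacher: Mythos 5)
Your proof of Part (1) --- which you correctly identify as the crux --- has a genuine gap, and in fact the strategy you sketch cannot work. First, the claim that \ref{K11new} forces $\varphi$ and $\Jdue\varphi$ to share a fibre is false: in $\WKt$ itself, taking $a=\ant$ we have $a\land \Jdue a=\ant\land 0=\ant=a$, yet $\ant$ and $\Jdue\ant=0$ lie in different fibres (indeed $\Jdue a\land(\Jdue a\lor a)=0\land\ant=\ant\neq \Jdue a$). Semantically, $\varphi\land \Jdue\varphi\thickapprox\varphi$ only tells you that $\Jdue\varphi$ lives in a fibre \emph{below} that of $\varphi$ and that its image in $\varphi$'s fibre lies above $\varphi$; it does not pin $\varphi\lor \Jdue\varphi$ to $\varphi$, but to that image. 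Second, and decisively, Part (1) is simply not derivable from the ingredients you use: you never invoke \ref{K12new}, and it is essential. On any nontrivial involutive bisemilattice, interpret $\Jdue$ as the constant $1$; then \ref{K1}--\ref{K11new} all hold (in particular $\varphi\land 1\thickapprox\varphi$ and $\varphi\lor \Jdue\psi\thickapprox\varphi\lor 1\thickapprox\varphi\lor \Jdue(\varphi\lor\psi)$), but $\varphi\lor \Jdue\varphi\thickapprox 1_i\not\thickapprox\varphi$. The idea you are missing is the paper's: \ref{K12new} gives $\Jdue 0= \Jdue(0\land\lnot 0)=0$, and then \ref{K10} with $\psi:=0$ (not $\psi:=\varphi$) together with \ref{K7} yields $\varphi=\varphi\lor 0=\varphi\lor \Jdue 0=\varphi\lor \Jdue(\varphi\lor 0)=\varphi\lor \Jdue\varphi$.

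The later parts inherit this problem and have smaller issues of their own. In (2), your direct computation starts exactly as the paper's ($\Jdue\varphi\lor(\varphi\land\lnot\varphi)=(\Jdue\varphi\lor\varphi)\land(\Jdue\varphi\lor\lnot\varphi)$, then Part (1)), but the second conjunct is handled not by ``\ref{K9new}-style reasoning'' but by distributing again and using \ref{K11new} together with the regular Boolean identity $x\lor(x\land\lnot x)\thickapprox x$; your alternative route again rests on the false fibre-sharing claim. In (3), the assertion that $a\lor b\thickapprox a$ and $a\land b\thickapprox a$ place $a,b$ in the same fibre is unjustified (they only give $p(b)=a$ for the relevant P\l onka map); the paper instead applies Part (2) to $\Jdue\varphi$ and kills $\Jdue\varphi\land\lnot \Jdue\varphi$ by \ref{K9new}. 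Part (4) is essentially right and matches the paper: negate \ref{K10} via De Morgan and absorb with the dual of \ref{K6} (a regular Boolean identity, hence valid in $\class{IBSL}$), though your write-up wanders before landing there. Since (2) and (3) depend on (1), the proposal as it stands fails at Part (1).
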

 \begin{proof}
(1) Let $\alg{A}\in\class{K}$. Observe that $\Jdue 0 = 0$. Indeed, using \ref{K4}, \ref{K5}, \ref{K8}, and \ref{K12new}, $0 = \Jdue(0 \wedge\neg 0) = \Jdue (0\wedge 1) = \Jdue 0$. From this we have, for any $a\in A$, that $a = a\vee 0 = a\vee\Jdue 0 = a\vee \Jdue(0\vee a) = a\vee \Jdue a$ by \ref{K7}, \ref{K10}. 

(2) Using \ref{K11new}, Item (1) and involutive bisemilattice properties, we obtain $\Jdue a \lor (a \land \lnot a) = (\Jdue a \lor a) \land (\Jdue a \lor \lnot a) = a \land (\Jdue a \lor \lnot a) = (a \land \Jdue a) \lor (a \land \lnot a) = a \lor (a \land \lnot a) = a$.

(3) Using \ref{K9new}, Item (2) and involutive bisemilattice properties, $\Jdue a = \Jdue\Jdue a \lor (\Jdue a \land \lnot \Jdue a) = \Jdue\Jdue a$.

(4) By De Morgan laws and the dual of \ref{K6}  it holds that $\varphi\lor\neg(\varphi\lor\psi)=\varphi\lor\neg\psi$. This, together with \ref{K10} yields
\begin{align*}
\varphi\lor\neg\Jdue\psi=\varphi\lor\neg(\varphi\lor\Jdue\psi)=&\\
\varphi\lor\neg(\varphi\lor\Jdue(\varphi\lor\psi))=&\varphi\lor \neg J_{_{2}}(\varphi\lor\psi).
\end{align*}

 \end{proof}

Recall that by $\alg{WK}^{e}$ we denote the $3$-element Bochvar algebra of Figure \ref{fig:WKe}, whose $\class{IBSL}$-reduct is $\alg{WK}$.
\begin{lemma}\label{lem: auxiliary for V(BCA)}
 Let $\alg{A}\in\class{K}$. Then,
\begin{enumerate}[(i)]
 \item $\alg{A}$ has an $\class{IBSL}$-reduct;
 \item The P\l onka representation of $\alg{A}$ has surjective homomorphisms;
 \item If the lowest fibre $\alg{A}_{0}$ in the P\l onka representation of $\alg{A}$ is a $2$-element Boolean algebra, then there is a unique way to turn the $\class{IBSL}$-reduct of $\alg{A}$ into a $\class{K}$-algebra. This is done by defining the $J_{_{2}}$ operation as follows, for every $a\in A$:
\[
J_{_{2}}a= \begin{cases}\tag{J-def}\label{J def in proof}
1 \text{ if }  a=1_{i} \text { and } i\in I^{+}\\
0 \text{ otherwise},
\end{cases}
\]
\end{enumerate}
where $I^{+} = \{i\in I \;:\; |A_i| > 1 \}$.
\end{lemma}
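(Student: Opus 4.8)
The plan is to dispatch the three items in order, leaning throughout on the P\l onka-sum structure of involutive bisemilattices. Item \emph{(i)} requires no argument: the identities $\mathbf{K_1}$--$\mathbf{K_8}$ of Definition~\ref{def: equations VBCA} are, word for word, the identities I1--I8 of Definition~\ref{def: IBSL}, so the $\Jdue$-free reduct of any $\A\in\class{K}$ is an involutive bisemilattice and hence admits a P\l onka sum representation $\PL(\A_i)_{i\in I}$ over a semilattice direct system of Boolean algebras. I write $\A_{i_0}$ for the bottom fibre.

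For \emph{(ii)}, the first move is to isolate an elementary fact about P\l onka sums of Boolean algebras: an element $c$ of such a sum satisfies $c\lor\lnot c\approx 1$ if and only if $c\in A_{i_0}$. Indeed $c$ and $\lnot c$ lie in a common fibre, so $c\lor\lnot c$ is the top of that fibre, which coincides with the global unit $1=1^{\A_{i_0}}$ exactly when the fibre is $\A_{i_0}$; conversely $c\lor\lnot c=1$ is a Boolean law inside $\A_{i_0}$. Applying this to axiom \ref{K9new} gives $\Jdue b\in A_{i_0}$ for every $b\in A$. Now fix $i\leq j$ in $I$ and $b\in A_j$; since $i_0\leq i\leq j$, the description of the transition maps recalled after Definition~\ref{def: IBSL} gives $p_{i_0 j}(\Jdue b)=\Jdue b\land(\Jdue b\lor b)$. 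Lemma~\ref{lemma: aritmetica K}(1) yields $\Jdue b\lor b=b$, and axiom \ref{K11new} (with commutativity of $\land$) yields $\Jdue b\land b=b$, so
\[
p_{i_0 j}(\Jdue b)=\Jdue b\land(\Jdue b\lor b)=\Jdue b\land b=b.
\]
Since $p_{i_0 j}=p_{ij}\circ p_{i_0 i}$ by compatibility and $p_{i_0 i}(\Jdue b)\in A_i$, the element $b$ is in the image of $p_{ij}$; as $b$ was arbitrary, $p_{ij}$ is surjective.

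For \emph{(iii)}, assume $\A_{i_0}$ is the two-element Boolean algebra $\{0,1\}$. By \emph{(ii)}, each $p_{i_0 i}$ is a surjective homomorphism from a two-element (hence simple) Boolean algebra, so every fibre $\A_i$ is either two-element with $p_{i_0 i}$ an isomorphism --- which happens exactly when $i\in I^{+}$ --- or trivial, when $i\notin I^{+}$; this classification depends only on the (fixed) involutive bisemilattice reduct. I would prove \emph{uniqueness} first. Let $\Jdue$ be any operation making that reduct a $\class{K}$-algebra, and compute $\Jdue a$ for an arbitrary $a\in A_i$. By axiom \ref{K9new} and the fact above, $\Jdue a\in\{0,1\}$. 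If $i\notin I^{+}$, then $a$ is the unique element of $A_i$ and $\lnot a=a$, so axiom \ref{K12new} forces $\Jdue a=\Jdue(a\land\lnot a)=0$. If $i\in I^{+}$ and $a=1_i$, axiom \ref{K11new} gives $1_i=1_i\land\Jdue 1_i$; but $1_i\land 0=1_i\land^{\A_i}p_{i_0 i}(0)=1_i\land^{\A_i}0_i=0_i\neq 1_i$, so $\Jdue 1_i=1$. If $i\in I^{+}$ and $a=0_i$, Lemma~\ref{lemma: aritmetica K}(1) gives $0_i=0_i\lor\Jdue 0_i$; but $0_i\lor 1=0_i\lor^{\A_i}1_i=1_i\neq 0_i$, so $\Jdue 0_i=0$. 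These cases exhaust every element of every fibre and the values obtained are exactly those prescribed by the displayed case function, so $\Jdue$ is uniquely determined by it. \emph{Existence} follows at once: $\A$ is assumed to lie in $\class{K}$, so its reduct does carry some $\class{K}$-admissible $\Jdue$, and by the previous paragraph that $\Jdue$ is the displayed one.

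The only step calling for real care is \emph{(iii)}: one must carry out the P\l onka-sum arithmetic faithfully --- in particular, for $a\in A_i$ and $c\in A_{i_0}$ one computes $a\land c=a\land^{\A_i}p_{i_0 i}(c)$ --- in order to license the ``forces'' inferences, and one must check that the case split genuinely covers all fibre elements, which is precisely where the fibre classification furnished by \emph{(ii)} enters. Items \emph{(i)} and \emph{(ii)} are short once the characterisation $c\lor\lnot c\approx 1\iff c\in A_{i_0}$ has been recorded.
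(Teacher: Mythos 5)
Your proposal is correct and follows essentially the same route as the paper's proof: item (i) by matching \ref{K1}--\ref{K8} with the $\class{IBSL}$ axioms, item (ii) by using \ref{K9new} to place $\Jdue a$ in the bottom fibre and then Lemma \ref{lemma: aritmetica K}.(1) together with \ref{K11new} to get $p_{i_0 j}(\Jdue a)=a$ and compose homomorphisms, and item (iii) by the same fibre-size case analysis forcing the displayed values of $\Jdue$. The only deviations are cosmetic: you derive $\Jdue 0_i=0$ via Lemma \ref{lemma: aritmetica K}.(1) and $\Jdue 0_i\in\{0,1\}$ rather than directly from \ref{K12new}, and you make the existence half of uniqueness explicit.
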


\begin{proof}
 $(i)$ is obvious as \ref{K1}-\ref{K8} axiomatise $\class{IBSL}$ (see Definition \ref{def: IBSL}). \\
 \noindent
 $(ii)$. By $(i)$, $\alg{A}$ admits a P\l onka sum representation; let $\langle I,\leq \rangle$ be its underlying semilattice. For $i,j\in I$, suppose  $i\leq j$ and let $a\in A_{j}$. By \ref{K9new} we have that $J_{_{2}}a\in A_{0}$. 
  Moreover, Lemma \ref{lemma: aritmetica K}.(1) and \ref{K11new} entail $J_{_{2}}a\land(J_{_{2}}a\lor a)=a$, i.e. $p_{0j}(J_{_{2}}a)=a$ . By the composition property of P\l onka homomorphisms, $a=p_{0j}(J_{_{2}}a)=p_{ij}(p_{0i}(J_{_{2}}a))$, thus $a$ is the image under $p_{ij}$ of $p_{0i}(J_{_{2}}a)$. This shows the claim.
  
  \noindent
  $(iii)$. Let $\alg{A}$ be such that the universe of the bottom fibre is $A_{0} = \{0,1\}$. Since every homomorphism $p_{ij}$ in the P\l onka sum represetation of $\alg{A}$ is surjective, every fibre $\alg{A}_i$ is such that $|A_i| \leq 2$. If $|A_i| = 1$ and $a\in A_i$, then $a = a\wedge\neg a$ and $\Jdue a = \Jdue (a\wedge\neg a) = 0$, by \ref{K12new}. If $|A_i| = 2$, then $\Jdue 0_i = \Jdue (0_i \wedge 1_i) = 0$ (again by \ref{K12new}). On the other hand, suppose by contradiction that $\Jdue 1_i = 0$, hence, by \ref{K11new} $1_i = 1_i\land\Jdue 1_i = 1_i\land 0 = 1_i\land p_{0i}(0) = 1_i\land 0_i = 0_i $, in contradiction with the fact that $|A_i| = 2$, i.e. $0_i\neq 1_i$.
 \end{proof}

\begin{lemma}\label{lem: fibre triviali per congruenza}
Let $\alg{A}\in\class{K}$ be such that $\alg{A}_{0}$ is a $2$-element Boolean algebra. Assume $a\in A_{i},b\in A_{j}$ and $\alg{A}_{i},\alg{A}_{j}$ are non-trivial. Then $\alg{A}_{i\lor j}$ is non-trivial. 

\begin{proof}
 Assume $a\in A_{i},b\in A_{j},$ and that $\alg{A}_{i},\alg{A}_{j}$ are non-trivial. Suppose, towards a contradiction, that $\alg{A}_{i\lor j}$ is trivial. Then $a\lor b\in A_{i\lor j}$, which by \ref{J def in proof} in Lemma \ref{lem: auxiliary for V(BCA)} entails $\Jdue(a\lor b)=0=\Jdue (0_{i}\lor 1_{j})$. By \ref{K10}, however, we have that $0_{i}=0_{i}\lor\Jdue(0_{i}\lor 1_{j})=0_{i}\lor\Jdue 1_{i \lor j}=1_{i}$, a contradiction.
\end{proof}
\end{lemma}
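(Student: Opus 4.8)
The plan is a short argument by contradiction that leans entirely on the explicit description of $\Jdue$ supplied by Lemma \ref{lem: auxiliary for V(BCA)}.(iii) --- available precisely because the bottom fibre $\alg{A}_0$ is the $2$-element Boolean algebra --- together with identity \ref{K10}. Suppose, for contradiction, that $\alg{A}_{i\lor j}$ is trivial while $\alg{A}_i$ and $\alg{A}_j$ are not. Non-triviality of these two fibres furnishes elements $0_i\neq 1_i$ in $A_i$ and $0_j\neq 1_j$ in $A_j$; the actual witnesses $a,b$ in the statement only serve to guarantee that $i,j$ belong to the index set, and the argument will be run with $0_i$ and $1_j$ in their place.

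First I would identify where each relevant join lands in the P\l onka sum. Since $0_i\in A_i$ and $1_j\in A_j$, equation \eqref{eq: operazioni} places $0_i\lor 1_j$ in the fibre $A_{i\lor j}$; as that fibre is a singleton, $0_i\lor 1_j$ is its unique element, so by \eqref{J def in proof} we get $\Jdue(0_i\lor 1_j)=0$. Likewise, $0$ and $1$ lie in $A_0$, so $0_i\lor 0$ and $0_i\lor 1$ are computed in $A_{i\lor 0}=A_i$, and a direct reading of \eqref{eq: operazioni} gives $0_i\lor 0 = 0_i\lor^{\alg{A}_i} p_{0i}(0) = 0_i$ and $0_i\lor 1 = 0_i\lor^{\alg{A}_i} p_{0i}(1) = 1_i$, using that $p_{0i}$ is a Boolean homomorphism and hence preserves $0$ and $1$.

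Now I would instantiate \ref{K10} with $\varphi:=0_i$ and $\psi:=1_j$, obtaining
\[
0_i\lor \Jdue 1_j \;=\; 0_i\lor \Jdue(0_i\lor 1_j) \;=\; 0_i\lor 0 \;=\; 0_i .
\]
On the other hand $\alg{A}_j$ is non-trivial, so \eqref{J def in proof} yields $\Jdue 1_j = 1$, whence $0_i\lor \Jdue 1_j = 0_i\lor 1 = 1_i$. Comparing the two computations gives $0_i=1_i$, contradicting the non-triviality of $\alg{A}_i$. The only point demanding attention is the bookkeeping of which fibre each join is evaluated in, carried out in the previous paragraph, and this is entirely routine given equation \eqref{eq: operazioni}; I do not foresee any genuine obstacle.
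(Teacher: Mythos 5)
Your proof is correct and follows essentially the same route as the paper's: assuming $\alg{A}_{i\lor j}$ trivial, both arguments apply \ref{K10} with $\varphi=0_i$, $\psi=1_j$, use the explicit description \eqref{J def in proof} from Lemma \ref{lem: auxiliary for V(BCA)}.(iii) to get $\Jdue(0_i\lor 1_j)=0$ and $\Jdue 1_j=1$, and derive $0_i=1_i$, contradicting non-triviality of $\alg{A}_i$. Your extra bookkeeping of which fibre each join is computed in just makes explicit what the paper leaves implicit.
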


\begin{theorem}\label{th: axiomatic base VBCA}
The variety generated by $\class{BCA}$ is $\class{K}$.
\end{theorem}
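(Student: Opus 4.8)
The plan is to prove $V(\class{BCA}) = \class{K}$ by establishing both inclusions. For the easy inclusion $V(\class{BCA}) \subseteq \class{K}$, it suffices to check that all twelve identities $\mathbf{K_1}$--$\mathbf{K_{12}}$ of Definition \ref{def: equations VBCA} hold in every Bochvar algebra: since $\class{K}$ is a variety and identities are preserved by $H$, $S$, $P$, this gives $V(\class{BCA}) = HSP(\class{BCA}) \subseteq \class{K}$. Most of these are immediate: $\mathbf{K_1}$--$\mathbf{K_8}$ are exactly the $\class{IBSL}$ axioms, which every Bochvar algebra satisfies by Theorem \ref{th: algebre di Bochvar2}(1)--(8) (equivalently Proposition \ref{prop: each BCA is SIBSL}); $\mathbf{K_9}$ is Definition \ref{def: algebre di Bochvar}(14) with $k = 2$; and $\mathbf{K_{10}}$, $\mathbf{K_{11}}$, $\mathbf{K_{12}}$ can be verified directly on the three-element algebra $\WKt$ of Figure \ref{fig:WKe} (since $\class{BCA} = ISP(\WKt)$, it is enough to check them there), or derived from the quasi-equational basis.

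For the hard inclusion $\class{K} \subseteq V(\class{BCA})$, the natural strategy is to show that every member of $\class{K}$ is a homomorphic image of a subalgebra of a product of Bochvar algebras; equivalently, since $V(\class{BCA})$ is a variety generated by a single finite algebra, one shows every subdirectly irreducible (or simply every) member of $\class{K}$ embeds suitably. A cleaner route, given the P\l onka-sum machinery already developed, is: take $\alg{A} \in \class{K}$; by Lemma \ref{lem: auxiliary for V(BCA)}(i),(ii) it has an $\class{IBSL}$-reduct with surjective P\l onka homomorphisms and underlying semilattice $\langle I, \vee, i_0\rangle$. The goal is to exhibit $\alg{A}$ as a quotient of a subalgebra of a power of $\WKt$. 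The idea is to build, for $\alg{A}$, a sufficiently rich family of homomorphisms into $\WKt$ (or, via the categorical/representation apparatus, to "cover" $\alg{A}$ by an algebra whose bottom fibre is a $2$-element Boolean algebra — Lemma \ref{lem: auxiliary for V(BCA)}(iii) and Lemma \ref{lem: fibre triviali per congruenza} are evidently tailored for exactly this, since they pin down the $J_2$ operation uniquely once the bottom fibre is $\mathbf{B}_2$ and guarantee the non-trivial fibres are closed under the semilattice join). Concretely: first handle the case where $\alg{A}_0$ (the bottom Boolean fibre) is the two-element algebra, using Lemmas \ref{lem: auxiliary for V(BCA)}(iii) and \ref{lem: fibre triviali per congruenza} to show such an $\alg{A}$ is (up to the forced $J_2$) a P\l onka sum of copies of $\mathbf{B}_2$ and trivial fibres over a semilattice with a distinguished "non-trivial" up-set $I^+$, and that any such algebra embeds into a power of $\WK$ in a way respecting $J_2$, hence lies in $ISP(\WKt) \subseteq V(\class{BCA})$; then reduce the general case to this one by using that an arbitrary Boolean algebra $\alg{A}_0$ is a subdirect product of copies of $\mathbf{B}_2$, lifting this subdirect decomposition fibrewise through the P\l onka sum to realise $\alg{A}$ as a subdirect product of $\class{K}$-algebras each with two-element bottom fibre (one must check the $J_2$ operation, characterised in $\class{K}$ by the arithmetic of Lemma \ref{lemma: aritmetica K} — in particular $\varphi \thickapprox J_2\varphi \lor (\varphi \land \lnot\varphi)$ — transfers correctly across this decomposition).

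The main obstacle I anticipate is the reduction from arbitrary bottom fibre to the two-element case while keeping $J_2$ under control: P\l onka sums do not commute with products in general, so realising $\alg{A}$ as a subdirect product of algebras with two-element bottom fibres requires care. One must verify that the congruences on $\alg{A}$ induced by the subdirect decomposition of $\alg{A}_0$ into copies of $\mathbf{B}_2$ are congruences of the full $\class{K}$-algebra (not merely of the $\class{IBSL}$-reduct), i.e. that they respect $J_2$ — and here the identities $\mathbf{K_9}$--$\mathbf{K_{12}}$ together with Lemma \ref{lemma: aritmetica K} must do the work, since they force $J_2 a$ to be the unique element of the bottom fibre above $a$ satisfying the relevant constraints. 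Once that compatibility is secured, the pieces assemble: $\class{K} \subseteq \PSD(\{\alg{B} \in \class{K} : \alg{B}_0 \cong \mathbf{B}_2\}) \subseteq \SSS\PPP(\WKt) \subseteq V(\class{BCA})$, and combined with the easy inclusion we conclude $\class{K} = V(\class{BCA})$.
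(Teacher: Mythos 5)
Your easy inclusion $V(\class{BCA})\subseteq\class{K}$ is fine and agrees with the paper. The hard inclusion, however, rests on a step that is false. Your plan culminates in $\class{K}\subseteq\PSD(\{\alg{B}\in\class{K}:\alg{B}_0\cong\mathbf{B}_2\})\subseteq ISP(\WKt)\subseteq V(\class{BCA})$, and in particular on the claim that every $\class{K}$-algebra whose bottom fibre is the two-element Boolean algebra embeds into a power of $\WKt$. Since $\PSD$ and $ISP$ never take quotients, this would prove $\class{K}\subseteq ISP(\WKt)=\class{BCA}$, i.e.\ that $\class{BCA}$ is a variety -- but $\class{BCA}$ is a \emph{proper} quasivariety, so the strategy cannot work. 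A concrete counterexample: take the P\l onka sum of two copies of $\mathbf{B}_2$ over the two-element semilattice, with $p_{0i}$ the (bijective) isomorphism, and $\Jdue$ forced as in Lemma \ref{lem: auxiliary for V(BCA)}(iii), so $\Jdue 1_i=1$, $\Jdue 0_i=0$. This algebra is a surjective image of $\WKt\times\mathbf{B}_2$ (send $(x,b)$ with $x\in\{0,1\}$ to $b$, and $(\ant,b)$ to $b_i$), hence lies in $HSP(\WKt)=V(\class{BCA})\subseteq\class{K}$ and has two-element bottom fibre; yet it violates quasi-identity (19) of Definition \ref{def: algebre di Bochvar}, because $\Jzero 1=\Jzero 1_i$, $\Juno 1=\Juno 1_i$ and $\Jdue 1=\Jdue 1_i=1$ while $1\neq 1_i$. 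So it is not in $\class{BCA}=ISP(\WKt)$, let alone in $SP(\WKt)$ (equivalently: in a genuine Bochvar algebra the maps $p_{i_0 i}$ with $i\neq i_0$ are never injective, by Lemma \ref{lemma: omomorfismi sono suriettivi}). Homomorphic images are therefore indispensable, and any embedding/subdirect-product scheme that avoids $H$ is doomed; the same objection applies to your intermediate claim that a P\l onka sum of copies of $\mathbf{B}_2$ and trivial fibres with the forced $\Jdue$ ``lies in $ISP(\WKt)$''.

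What the paper does instead is invoke Birkhoff's theorem and show $\class{K}_{SI}\subseteq HS(\WKt)$, where $HS(\WKt)$ consists of the trivial algebra, the two-element semilattice, $\mathbf{B}_2$ and $\WKt$. For a subdirectly irreducible $\alg{A}\in\class{K}$ it constructs, for $a$ in the bottom fibre, the congruences $\Theta_a$ (compatibility with $\Jdue$ comes from K10 and Lemma \ref{lemma: aritmetica K}(4)) and shows $\Theta_a\cap\Theta_{\neg a}=\Delta$, forcing the bottom fibre to be trivial or $\mathbf{B}_2$; it then rules out any remaining configuration by exhibiting two further nontrivial congruences with trivial intersection (distinguishing the cases of at most one trivial fibre versus several). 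Your Lemmas \ref{lem: auxiliary for V(BCA)} and \ref{lem: fibre triviali per congruenza} are indeed the right auxiliary tools, but they are used to analyse the subdirectly irreducible members, not to build embeddings into powers of $\WKt$. If you want to salvage your proposal, replace the $\PSD$/$ISP$ chain by this congruence analysis of the subdirectly irreducibles (or otherwise argue directly that each such algebra is a homomorphic image of a subalgebra of a power of $\WKt$).
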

\begin{proof}
 Since $V(\mathsf{BCA})=H(\class{BCA})=HSP(\WKt)=V(\WKt)$ and \ref{K1}-\ref{K12new}  hold in $\WKt$, clearly $V(\class{BCA})\subseteq \class{K}$. We will obtain the converse inclusion by showing that $\class{K}_{SI}\subseteq HS(\WKt)$,  where $HS(\WKt)$ consists of the trivial algebra, the $2$-element semilattice, the $2$-element Boolean algebra and $\WKt$. Our strategy is an adaption of the results contained in \cite{Kal71}, and it proceeds as follows.
   Let  $\alg{A}\in\class{K}_{SI}$. By Lemma \ref{lem: auxiliary for V(BCA)}, $\alg{A}$ has an $\class{IBSL}$-reduct with surjective homomorphisms and underlying semilattice $I$. Let  $0$ be the least element of $I$.   For $a\in A_{0}$ define the relation 
  \[
  \Theta_{a}\coloneqq\{ \langle b,c \rangle \in\alg{A}^{2}: a\lor b= a\lor c, a\lor\neg b= a\lor\neg c \}.
  \]
  $\Theta_{a}$ is an $\class{IBSL}$ congruence \cite[Lm. 24]{Bonzio16SL}. Moreover, suppose $ \langle b,c \rangle \in\Theta_{a}$. Then:
  \[a\lor\Jdue b=a\lor\Jdue(a\lor b)=a\lor\Jdue(a\lor c)=a\lor\Jdue c,\]
  
\noindent where the first and the last equalities hold by \ref{K10}, while the second one holds because of the assumption $ \langle b,c \rangle \in\Theta_{a}$. Similarly, using Lemma \ref{lemma: aritmetica K}.(4), we obtain
   \[a\lor\neg\Jdue b=a\lor\neg\Jdue(a\lor b)=a\lor\neg\Jdue(a\lor c)=a\lor\neg\Jdue c,\]
  \noindent thus $\Theta_{a}$ is compatible with $\Jdue$ and it is a congruence on $\alg{A}$.

  We claim that $\Theta_{a}=\Delta$ if and only if $a=0$. The right-to-left direction of this claim is obvious. Conversely, assume $\Theta_{a}=\Delta$. For any $b\in A$ we have $a\lor b=a\lor(a\lor b)$ and 
  \[a\lor\neg(a\lor b)=a\lor(\neg a\land\neg b)=(a\lor\neg a)\land(a\lor\neg b)=a\lor\neg b,\]
  where the above equivalences are justified by the fact that De Morgan and Distributivity laws hold in $\class{IBSL}$.
 Hence, $b=a\lor b$ for every $b\in A$, which is true if and only if $a=0$. This proves the claim.
 
    We now show that $\alg{A}_{0}$ is either a trivial algebra or the $2$-element Boolean algebra. 
  For $a\in A_{0}$, let $\langle b,c \rangle \in\Theta_{a}\cap\Theta_{\neg a}$. From the definition of this congruence it follows that $a\lor b=a\lor c$ and $a\land b=a\land c$. Thus
\begin{align*}
 b\land(b\lor a)&=b\land(c\lor a)\\
 &=(b\land c)\lor (b\land a)\\
 &=(b\land c)\lor (c\land a)\\
 &=c\land(c\lor a).
\end{align*}
Since $a\in A_{0}$, for any $i\in I$ and $d\in A_{i}$ we have that $d\land(d\lor a)=d$. So, the above equivalences entail that $b=c$, namely $\Theta_{a}\cap\Theta_{\neg a}=\Delta$.
Since $\alg{A}$ is subdirectly irreducible, $\Theta_{a}=\Delta$ or $\Theta_{\neg a}=\Delta$ which by the above claim entails $a=0$ or $\neg a=0$ (i.e. $a=1$). Therefore, we have shown that $\alg{A}_{0}$ is either trivial or a $2$-element Boolean algebra. If it is a trivial algebra, then $\alg{A}$ is a semilattice by $(ii)$ in Lemma \ref{lem: auxiliary for V(BCA)}, thus $\alg{A}$ is the $2$-element semilattice, i.e., the unique subdirectly irreducible semilattice. 

So, suppose that $\alg{A}_{0}$ is a $2$-element Boolean algebra. Notice that for every $i\in I$, $\alg{A}_{i}$ is either trivial or a $2$-element Boolean algebra, because each homomorphism is surjective. Suppose, towards a contradiction, that $\alg{A}\notin HS(\alg{WK}^{e})$.  We distinguish two cases, depending on the presence of non-trivial fibres in the P\l onka representation of $\alg{A}$. 

Let us begin with the case in which there is at most one  trivial fibre. Our assumption that $\alg{A}\notin HS(\alg{WK}^{e})$, together with (\ref{J def in proof}),  ensures that $\alg{A}_{0}$ cannot be the only non-trivial fibre, for otherwise $\alg{A}\in HS(\alg{WK}^{e})$. Define a relation $\Theta^{\prime}$ as follows:

\[
\langle a,b \rangle \in \Theta^{\prime} \iff \begin{cases}
a=b \text{ or}  \\
 \exists i,j\in I^{+}, a=1_{i}, b=1_{j}\text{ or}\\
\exists i,j\in I^{+}, a=0_{i}, b=0_{j},
\end{cases}
\]
where $I^{+}=\{i\in I: \alg{A}_{i} \text{ is non trivial}\}.$

Furthermore, let  us now define $\Theta^{\star}$ as:
\[\langle a,b \rangle \in\Theta^{\star} \iff \begin{cases}
a=b \text{ or}  \\
 \exists i\in I \text{,} a,b\in A_{i}.
\end{cases}
\]
Clearly both are equivalence relations. We now show that  $\Theta^{\prime}$ is a  congruence on $\alg{A}$. The compatibility with $\neg$ is obvious once we recall that whenever $a,b\in A_{i}$ then also $\neg a,\neg b\in A_{i}$. Suppose $\langle a, b \rangle \in\Theta^{\prime}$ and $\langle c, d \rangle \in\Theta^{\prime}$. Without loss of generality we may assume  $a\neq b$ and $a=1_{i},b=1_{j},c=0_{k},d=0_{z}$.
 Therefore $\alg{A}_{i}, \alg{A}_{j}$ are non-trivial, by definition of $\Theta^{\prime}$. If one among $\alg{A}_{k}$ and $\alg{A}_{z}$ is trivial, then $c=d$ and $k=z$, which will be the top element of $I$. 
 Therefore, since  $\alg{A}$ has at most one trivial fibre, $a\lor c \Theta b\lor d$, as desired. Otherwise, $\alg{A}_{k},\alg{A}_{z}$ are non-trivial and $c\neq d$. By Lemma \ref{lem: fibre triviali per congruenza}, $\alg{A}_{i\lor k}$ and $\alg{A}_{j\lor z}$ are non trivial, thus $a\lor c=1_{i\lor k}$ and $b\lor d=1_{j\lor z}$, namely $\langle a\lor c,b\lor d \rangle \in\Theta^{\prime}$. Suppose now $\langle a,b \rangle \in \Theta^{\prime}$ so, without loss of generality $a=1_{i},b=1_{j}$ for some $i,j\in I^{+}$. By (\ref{J def in proof}) in Lemma \ref{lem: auxiliary for V(BCA)} we obtain $\Jdue a=1=\Jdue b$, thus $\Theta^{\prime}$ is compatible with $\Jdue$ and it is a congruence on $\alg{A}$.

 $\Theta^{\star}$ is an involutive bisemilattice congruence (see e.g. \cite[Thm. 2.3.11]{Bonziobook}) and $\Jdue a\in A_{0}$ for each $a\in A$, so $\Theta^{\star}$ is compatible with $\Jdue$.
Recall that, as already noticed, there are additional nontrivial fibres other than $\alg{A}_{0}$: let  $\alg{A}_{q}$ be one of them, distinct from $\alg{A}_{0}$. Thus, $\langle 1,1_{q} \rangle \in\Theta^{\prime}\neq\Delta\neq\Theta^{\star}$, because $\langle 0,1 \rangle \in \Theta^{\star}$. If we show $\Theta^{\prime}\cap\Theta^{\star}=\Delta$, we would then obtain the desired contradiction with the fact that $\alg{A}$ is  subdirectly irreducible. To this end, notice that for distinct elements $a,b$, we have that $\langle a,b \rangle \in \Theta\cap\Theta^{\star}$ if and only if either $a=1_{i},b=1_{j}$ and $i=j$ or  $a=0_{i},b=0_{j}$ and $i=j$. In both cases, this is true if and only if $a=b$, as desired.

We now consider the case in which there are two or more trivial fibres. Define the relation $\Theta$ as:
 \[\langle a,b \rangle \in\Theta \iff \begin{cases}
a=b \text{ or}  \\
a=\neg a, b=\neg b.
\end{cases}
\]
This is an equivalence relation. We show that it is a 
congruence on $\alg{A}$. If $\langle a,b \rangle \in\Theta$,  $a=\neg a, b=\neg b$, so $\langle \neg a, \neg b \rangle \in\Theta$. Moreover $\Jdue a=0=\Jdue b$, by $(iii)$ in Lemma \ref{lem: auxiliary for V(BCA)}, i.e. $\langle \Jdue a,\Jdue b\rangle \in\Theta$. If $\langle a,b \rangle \in\Theta, \langle c,d \rangle \in\Theta$, then $a\lor c, b\lor d$ must be fixpoints. This proves that $\Theta$ is a congruence. Notice that $\Theta^{\star}$ is a congruence on $\alg{A}$ also in this case, for the very same argument used previously. 
Observe also that $\Theta\neq\Delta\neq\Theta^{\star}$, because $\langle 0,1 \rangle \in\Theta^{\star}$, while $\Theta$ collapses (at least) the two trivial fibres.  We proceed to show that $\Theta\cap\Theta^{\star}=\Delta$. We have that $\langle a,b \rangle \in\Theta\cap\Theta^{\star}$ if and only if  $a=\neg a, b=\neg b$ and  $a,b\in A_{i}$ for some $i\in I$. However, this is true if and only if $a=b$, again a contradiction.

Therefore, we conclude $\alg{A}\in HS(\alg{WK}^{e})$.
 This proves $K\subseteq V(\class{BCA})$ and, finally, $\class{K}=V(\class{BCA})$.  
 \end{proof}

\begin{remark}\label{rem: forbidden conf}
Observe that Lemma \ref{lem: fibre triviali per congruenza}  allows us to build a $\class{SIBSL}$ with surjective homomorphisms that cannot be equipped with a $V(\class{BCA})$ structure. This is the case of the algebra displayed in Figure \ref{forb}, let us call it $\alg{A}$. This algebra is a $\class{SIBSL}$ whose underlying semilattice $I$ consists of four elements: the bottom element, $i,j$ and $k=i\lor j$. Notice that $i,j$ are incomparable elements in $I$. The corresponding fibres $\alg{A}_{0},\alg{A}_{i},\alg{A}_{j}$ are $2$-element Boolean algebras, while $\alg{A}_{k}$ is a trivial algebra. Dotted lines represent P\l onka homomorphisms. Suppose now that $\alg{A}$ can be equipped with an appropriate operation $\Jdue$ satisfying \ref{K1}-\ref{K12new}. By Lemma \ref{lem: auxiliary for V(BCA)}, we have that $\Jdue(0_{i})=0=\Jdue(1_{j}\lor 0_{i})=\Jdue(0_{k})$. By \ref{K10}, however, we have that $0_{i}=0_{i}\lor\Jdue(0_{i}\lor 1_{j})=0_{i}\lor\Jdue 1_{j}=1_{i}$, a contradiction. This shows that any $\class{IBSL}$ having $\alg{A}$ as a subalgebra cannot be turned into a $V(\class{BCA})$ algebra.
\end{remark}

\begin{figure}[h]\caption{A forbidden configuration}\label{forb}
 \begin{center}
\begin{tikzpicture}[scale=1, dot/.style={inner sep=2.5pt,outer sep=2.5pt}, solid/.style={circle,fill,inner sep=2pt,outer sep=2pt}, empty/.style={circle,draw,inner sep=2pt,outer sep=2pt}]

 \node  [label={above:$0_{k}=1_{k}$}](0k) at (4,7.5) [solid] {};

\node  [label={right:$0_{i}$}](0i) at (6,4) [solid] {};
  \node  [label={right:$1_{i}$}] (1i) at (6,5.5) [solid] {};

\node  [label={left:$0_{j}$}](0j) at (2,4) [solid] {};
 \node  [label={left:$1_{j}$}] (1j) at (2,5.5) [solid] {};

 \node  [label={above:$1$}](1) at (4,3.5) [solid] {};
 \node  [label={right:$0$}] (0) at (4,2) [solid] {};
 
  \draw[-] (0) edge (1);
 \draw[-] (0i) edge (1i);
  \draw[-] (0j) edge (1j);
    \draw[dotted] (0) edge (0j);
        \draw[dotted] (1) edge (1j);
  \draw[dotted] (0) edge (0i);
        \draw[dotted] (1) edge (1i);
        \draw[dotted] (1i) edge (0k);
        \draw[dotted] (0i) edge (0k);
\draw[dotted] (1j) edge (0k);
        \draw[dotted] (0j) edge (0k);
\end{tikzpicture}
\end{center}
\end{figure}
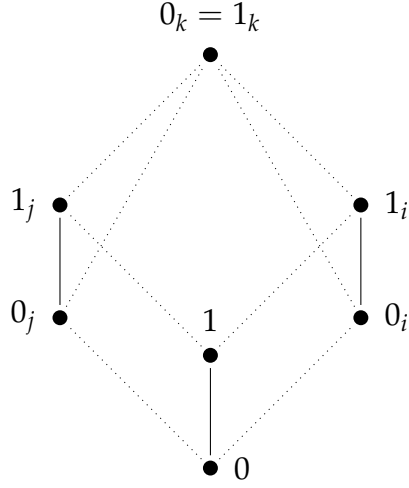



By Theorem \ref{th: axiomatic base VBCA}, we are in a position to help ourselves to all the equational properties of Bochvar algebras when carrying out computations concerning members of $\class{K}=V(\class{KBCA})$. For example, in what follows the axioms of Definition \ref{def: algebre di Bochvar} will be freely employed when necessary.

\begin{remark}\label{rem: V(BCA) non congruente distributivo} 
As previously noted, all semilattices with zero can be viewed as members of $V(\class{BCA})$, such that all primitive operation symbols other than $\lor$ and $0$ are either redundant, or realise the identity, or realise constant operations. For this reason $V(\class{BCA})$ does not satisfy any congruence identities in the type of lattices.
\end{remark}



\subsection{The varietal join of Boolean algebras and semilattices}\label{gattamorta}

In this subsection we show that $\class{BA}$ and $\class{SL}$, whose relative equational bases w.r.t. $V(\class{BCA})$ are given by the identities $\Jdue x \approx x$ and $\Jdue x \approx 1$ respectively, are \emph{independent} subvarieties of $V(\class{BCA})$ and, therefore, that their varietal join $\class{BA} \lor \class{SL}$ comprises precisely the isomorphic images of direct products of a Boolean algebra and a semilattice. Although this result directly provides an axiomatisation of $\class{BA} \lor \class{SL}$, using the algorithms given in, for instance, \cite{JonTsi} or  \cite{KLP}, we derive a much simpler equational basis by explicitly describing the direct product decomposition.

Recall that two subvarieties $\class{V}_{1},\class{V}_{2}$ of a variety $%
\class{V}$ of type $\tau $ are said to be \emph{independent} if there exists a binary term $\varphi (x,y)$ of type $\tau $ s.t. $\class{V}_{1}\vDash
\varphi (x,y) \approx x$ and $\class{V}_{2}\vDash \varphi (x,y) \approx y$. Recall, moreover, that the \emph{direct product} of two similar varieties $\class{V}_{1},\class{V}_{2}$ is defined as%
\begin{equation*}
\class{V}_{1}\times \class{V}_{2}:=I(\left\{ \mathbf{A}_{1}\times \mathbf{A}%
_{2} : \mathbf{A}_{1}\in \class{V}_{1},\mathbf{A}_{2}\in \class{V}_{2}\right\}) \text{.}
\end{equation*}

The following classical theorem about independent varieties is due to Gr\"{a}tzer, Lakser, and P\l onka \cite{GLP69}:

\begin{theorem}
\label{gratz}Let $\class{V}_{1},\class{V}_{2}$ be independent
subvarieties of an arbitrary variety $\class{V}$. Then $\class{V}_{1},%
\class{V}_{2}$ are such that their join $\class{V}_{1}\vee 
\class{V}_{2}$ (in the lattice of subvarieties of $\class{V}$) is their
direct product $\class{V}_{1}\times \class{V}_{2}$.
\end{theorem}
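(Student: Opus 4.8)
The plan is to establish the nontrivial inclusion $\class{V}_1 \vee \class{V}_2 \subseteq \class{V}_1 \times \class{V}_2$ (the reverse being immediate, since $\class{V}_1 \vee \class{V}_2$ is closed under direct products and isomorphic copies) by showing that $\class{V}_1 \times \class{V}_2$ is \emph{itself} a variety containing both $\class{V}_1$ and $\class{V}_2$; as $\class{V}_1 \vee \class{V}_2$ is the least subvariety of $\class{V}$ including $\class{V}_1 \cup \class{V}_2$, this suffices. That $\class{V}_i \subseteq \class{V}_1 \times \class{V}_2$ is clear: each variety contains the trivial algebra $\mathbf{1}$, and $\alg{A} \cong \alg{A} \times \mathbf{1}$ for $\alg{A} \in \class{V}_1$, symmetrically for $\class{V}_2$. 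So the work consists in showing that $\class{V}_1 \times \class{V}_2$ is closed under $H$, $S$ and $P$.

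Throughout write $\varphi(x,y)$ for the independence term, so $\class{V}_1 \models \varphi(x,y) \approx x$ and $\class{V}_2 \models \varphi(x,y) \approx y$. The single fact that powers the whole argument is that on any $\alg{A}_1 \times \alg{A}_2$ with $\alg{A}_i \in \class{V}_i$ one has $\varphi(\langle a_1,a_2 \rangle, \langle b_1,b_2 \rangle) = \langle a_1,b_2 \rangle$, i.e.\ $\varphi$ ``mixes coordinates''. Closure under $P$ is then routine bookkeeping: $\prod_k (\alg{A}_1^k \times \alg{A}_2^k) \cong (\prod_k \alg{A}_1^k) \times (\prod_k \alg{A}_2^k)$, and $\class{V}_1, \class{V}_2$ are closed under products. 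For closure under $S$, take $\alg{D} \leq \alg{A}_1 \times \alg{A}_2$ with $\alg{A}_i \in \class{V}_i$ and put $\alg{G}_i := \pi_i(\alg{D}) \in \class{V}_i$, so that $\alg{D}$ is a subdirect product of $\alg{G}_1$ and $\alg{G}_2$; I would show $\alg{D} = \alg{G}_1 \times \alg{G}_2$, because given $\langle x_1,x_2 \rangle \in G_1 \times G_2$ one picks $\langle x_1,y_2 \rangle, \langle z_1,x_2 \rangle \in D$ by subdirectness and then $\varphi(\langle x_1,y_2 \rangle, \langle z_1,x_2 \rangle) = \langle x_1,x_2 \rangle$ by coordinate mixing (valid since $\alg{G}_i \in \class{V}_i$), so $\langle x_1,x_2 \rangle \in D$ as $D$ is closed under the term operation $\varphi$.

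The core is closure under $H$, which I would deduce from the lemma that \emph{every congruence $\psi$ on $\alg{A}_1 \times \alg{A}_2$, with $\alg{A}_i \in \class{V}_i$, decomposes as $\psi = \psi_1 \times \psi_2$ with $\psi_i \in \mathrm{Con}(\alg{A}_i)$}. The construction is $\psi_1 := \{ \langle a_1,b_1 \rangle : \langle \langle a_1,c \rangle, \langle b_1,c \rangle \rangle \in \psi \text{ for some } c \in A_2 \}$, and dually $\psi_2$. One first checks the witness $c$ is immaterial: for any $d \in A_2$ and any $w$ with second coordinate $d$, the term operation $z \mapsto \varphi(z,w)$ carries a witnessing pair for $c$ to a witnessing pair for $d$, because $\varphi(\langle a_1,c \rangle, w) = \langle a_1,d \rangle$. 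Granting this, one verifies that $\psi_1$ is reflexive, transitive and compatible with the basic operations — here passing to a single common witness for all the arguments involved — so it is a congruence of $\alg{A}_1$, and likewise for $\psi_2$. Then $\psi = \psi_1 \times \psi_2$: for $\subseteq$, apply the term operations $z \mapsto \varphi(z,w)$ with $w$ of second coordinate $a_2$, and $z \mapsto \varphi(w,z)$ with $w$ of first coordinate $a_1$, to a pair $\langle \langle a_1,a_2 \rangle, \langle b_1,b_2 \rangle \rangle \in \psi$, to read off witnesses for $\langle a_1,b_1 \rangle \in \psi_1$ and $\langle a_2,b_2 \rangle \in \psi_2$; for $\supseteq$, splice a $\psi_1$-witness and a $\psi_2$-witness using transitivity of $\psi$. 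Given the lemma, $(\alg{A}_1 \times \alg{A}_2)/\psi \cong \alg{A}_1/\psi_1 \times \alg{A}_2/\psi_2 \in \class{V}_1 \times \class{V}_2$ because $\class{V}_1, \class{V}_2$ are closed under homomorphic images, which is precisely closure under $H$.

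Assembling the three closure properties shows that $\class{V}_1 \times \class{V}_2$ is a variety, and the theorem follows as explained at the outset. I expect the main obstacle to be the congruence-factorisation lemma — concretely, the well-definedness of $\psi_1$ and the verification of the congruence axioms — since this is the step where the hypothesis of independence, through the coordinate-mixing behaviour of $\varphi$, is genuinely used; the point to be careful about is that checking compatibility of $\psi_1$ with the operations requires first choosing a single common witness $c$ for the finitely many pairs involved.
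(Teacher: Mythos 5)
Your proposal is correct. Note that the paper itself offers no proof of this statement: it is quoted as a classical result of Gr\"atzer, Lakser and P\l onka \cite{GLP69}, so there is no in-paper argument to compare against line by line. Your route — proving that $\class{V}_1 \times \class{V}_2$ is itself a variety by checking closure under $P$, $S$ and $H$, and then sandwiching it between $\class{V}_1 \cup \class{V}_2$ and $\class{V}_1 \vee \class{V}_2$ — is a complete and standard way to obtain the theorem, and you correctly locate where independence does real work: the coordinate-mixing identity $\varphi(\langle a_1,a_2\rangle,\langle b_1,b_2\rangle)=\langle a_1,b_2\rangle$ powers both the $S$-closure (filling out a subdirect product) and the congruence-factorisation lemma for $H$-closure, where compatibility of a congruence with the polynomials $z \mapsto \varphi(z,w)$ and $z \mapsto \varphi(w,z)$ gives witness-independence and the two projections $\psi_1,\psi_2$, and transitivity splices them back to $\psi=\psi_1\times\psi_2$. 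For comparison, the classical argument runs in the opposite direction: one observes that the identities $\varphi(x,x)\approx x$, $\varphi(\varphi(x,y),z)\approx\varphi(x,z)\approx\varphi(x,\varphi(y,z))$ and $\varphi(f(\bar{x}),f(\bar{y}))\approx f(\varphi(x_1,y_1),\dots,\varphi(x_n,y_n))$ all hold in $\class{V}_1\vee\class{V}_2$ (being valid in each $\class{V}_i$), so $\varphi$ is a decomposition term on every algebra $\A$ of the join, and the relations $\{\langle a,b\rangle:\varphi(a,b)=a\}$ and $\{\langle a,b\rangle:\varphi(a,b)=b\}$ are complementary factor congruences exhibiting $\A$ directly as a product with factors in $\class{V}_1$ and $\class{V}_2$. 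That version decomposes each member of the join without first showing the product class is a variety, at the cost of verifying where the factors live; your version front-loads the work into the (correctly identified, and correctly handled) congruence-factorisation lemma. Either way the theorem follows; your write-up has no gap beyond the routine verifications you explicitly flag.
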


We aim at showing that the varieties $\class{BA}$ and $\class{SL}$ are independent. Before that, we confirm that it is legitimate to identify these classes with Boolean algebras and semilattices, respectively.

\begin{lemma}\label{terrapieno}
Let $\A \in V(\class{BCA})$. Then:
\begin{enumerate}
   \item $\A \models J_{2}x \approx x$ iff the involutive bisemilattice reduct of $\A$ is a Boolean algebra;
   \item $\A \models J_{2}x \approx 1$ iff the involutive bisemilattice reduct of $\A$ is a semilattice.
\end{enumerate}
\end{lemma}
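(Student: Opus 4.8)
The plan is to settle both biconditionals by short equational manipulations inside the basis $\class{K}$ for $V(\class{BCA})$ given by Theorem \ref{th: axiomatic base VBCA}. The two criteria I rely on are those recorded after Definition \ref{def: IBSL}: a member of $\class{IBSL}$ is a Boolean algebra exactly when it satisfies $\varphi \land \lnot\varphi \approx 0$, and it is a semilattice with zero exactly when it satisfies $0 \approx 1$ (equivalently $\lnot\varphi \approx \varphi$). Since by Lemma \ref{lem: auxiliary for V(BCA)}.$(i)$ every $\A \in V(\class{BCA})$ has an $\class{IBSL}$-reduct, checking these identities suffices. I will also use that $\Jdue 0 \approx 0$ holds in $\class{K}$, which was established inside the proof of Lemma \ref{lemma: aritmetica K}.(1).

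For (1), in the backward direction I assume the $\class{IBSL}$-reduct of $\A$ is a Boolean algebra, so $\varphi \land \lnot\varphi \approx 0$; then Lemma \ref{lemma: aritmetica K}.(2) yields $\varphi \approx \Jdue\varphi \lor (\varphi \land \lnot\varphi) \approx \Jdue\varphi \lor 0 \approx \Jdue\varphi$, using \ref{K2} and \ref{K7}, so $\A \models \Jdue x \approx x$. For the forward direction, assuming $\A \models \Jdue x \approx x$, I combine this with \ref{K12new} to obtain $\varphi \land \lnot\varphi \approx \Jdue(\varphi \land \lnot\varphi) \approx 0$, so the reduct is Boolean. For (2), in the forward direction, assuming $\A \models \Jdue x \approx 1$ and specialising to $x = 0$, I get $0 \approx \Jdue 0 \approx 1$, so the reduct satisfies $0 \approx 1$ and is a semilattice with zero. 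For the backward direction, assuming the reduct is a semilattice, $\lnot\varphi \approx \varphi$ gives $\varphi \land \lnot\varphi \approx \varphi \land \varphi \approx \varphi$ (the second equality valid in every $\class{IBSL}$), and \ref{K12new} then gives $\Jdue\varphi \approx \Jdue(\varphi \land \lnot\varphi) \approx 0 \approx 1$, so $\A \models \Jdue x \approx 1$.

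There is no genuine obstacle here: the lemma is a routine corollary of Theorem \ref{th: axiomatic base VBCA}, and the only care needed is in selecting the convenient form of the defining identities for $\class{BA}$ and $\class{SL}$ among involutive bisemilattices and in remembering that $\Jdue 0 \approx 0$ is already available. A more structural route is also possible --- \ref{K9new} forces each value $\Jdue a$ into the bottom (Boolean) fibre of the P\l onka representation, so $\Jdue x \approx x$ collapses $\A$ to a single fibre, while $\Jdue x \approx 1$ forces $0 \approx 1$ and hence all fibres trivial --- but the equational argument above is shorter and I would present that one.
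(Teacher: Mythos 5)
Your proof is correct and follows essentially the same route as the paper's: both reduce the lemma to short equational computations in the basis $\class{K}$, using Lemma \ref{lemma: aritmetica K} together with the identities $\varphi\land\lnot\varphi\approx 0$ and $0\approx 1$ characterising Boolean algebras and semilattices among involutive bisemilattices (your forward direction of (1) is literally the paper's argument via \ref{K12new}). The only differences are cosmetic choices of which identity to invoke --- you use Lemma \ref{lemma: aritmetica K}.(2) and $\Jdue 0\approx 0$ where the paper uses \ref{K11new} with Lemma \ref{lemma: aritmetica K}.(1) and \ref{K9new} --- and these are interchangeable.
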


\begin{proof}
    (1) If the involutive bisemilattice reduct of $\A$ is a Boolean algebra, its P\l onka sum representation has a single fibre, whence by \ref{K11new} and Lemma \ref{lemma: aritmetica K}.(1) for all $a \in A$ we have that $\Jdue a = \Jdue a \lor (\Jdue a \land a) = a$. Conversely, if $\A \models J_{2}x \approx x$, then for all $a \in A$ we have that $a \land \lnot a = J_{2}(a \land \lnot a) = 0$, hence the involutive bisemilattice reduct of $\A$ is a Boolean algebra.

    (2) If the involutive bisemilattice reduct of $\A$ is a semilattice, then $\A \models x \approx \lnot x$. Then by \ref{K9new} for all $a \in A$ we have that $1 = J_{2}a \lor \lnot J_{2}a = J_{2}a \lor J_{2}a = J_{2}a$. Conversely, if $\A \models J_{2}x \approx 1$, then for all $a \in A$ we have that $1 = J_{2}(a \land \lnot a) = 0$ by \ref{K12new}, which means that hence the involutive bisemilattice reduct of $\A$ is a semilattice with zero.
\end{proof}

\begin{theorem}\label{birignao}
    $\class{BA}$ and $\class{SL}$ are independent subvarieties of $V(\class{BCA})$.
\end{theorem}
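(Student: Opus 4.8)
The plan is to establish independence by exhibiting an explicit witnessing term. By definition, it suffices to produce a binary term $\varphi(x,y)$ in the similarity type $\langle\wedge,\vee,\neg,\Jdue,0,1\rangle$ such that $\class{BA}\vDash\varphi(x,y)\approx x$ and $\class{SL}\vDash\varphi(x,y)\approx y$. The term I would propose is
\[
\varphi(x,y):=\Jdue x\vee(y\wedge\neg y).
\]
The idea is that on the Boolean side $\Jdue$ realises the identity while $y\wedge\neg y$ realises the constant $0$, so the join reduces to $x$; on the semilattice side $\Jdue$ is constant and $0=1$, while $y\wedge\neg y$ reduces to $y$ (since there $\wedge$ agrees with $\vee$ and $\neg y\approx y$), so the join reduces to $y$.

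The verification then comes down to two short computations inside $\class{K}=V(\class{BCA})$, each invoking Lemma \ref{terrapieno}. In $\class{BA}$ we have $\Jdue x\approx x$ by the relative equational basis, and by Lemma \ref{terrapieno}.(1) the involutive bisemilattice reduct is a Boolean algebra, so $y\wedge\neg y\approx 0$; hence $\varphi(x,y)\approx x\vee 0\approx x$ by \ref{K2} and \ref{K7}. In $\class{SL}$ we have $\Jdue x\approx 1$ by the relative basis, and by Lemma \ref{terrapieno}.(2) the reduct is a semilattice with zero, so $0\approx 1$, $\neg y\approx y$ and $\wedge$ agrees with $\vee$; hence $\varphi(x,y)\approx 0\vee(y\vee y)\approx 0\vee y\approx y$ by \ref{K1} and \ref{K7}.

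I do not expect a genuine obstacle here: once $\varphi$ is written down, the two identities are immediate, so the only creative step is finding the term, and several variants serve equally well (for instance $(\Jdue x\wedge y)\vee(\Jdue x\wedge\neg y)$, which by distributivity equals $\Jdue x\wedge(y\vee\neg y)$ and is checked by the same two-case split). With the statement proved, Theorem \ref{gratz} immediately yields $\class{BA}\vee\class{SL}=\class{BA}\times\class{SL}$, i.e. the join consists precisely of the isomorphic copies of products of a Boolean algebra with a semilattice — the basis for the simpler axiomatisation announced at the head of the subsection.
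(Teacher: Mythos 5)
Your proof is correct and follows essentially the same route as the paper: exhibit an explicit binary term witnessing independence and verify the two identities via Lemma \ref{terrapieno}. The paper uses the term $J_2x \lor (J_2x \land y)$ rather than your $J_2x \lor (y \land \lnot y)$, but the verification is the same routine two-case check, so the difference is immaterial.
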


\begin{proof}
    Using Lemma \ref{terrapieno}, it is easy to check that the term $\varphi(x,y) := J_{2}x \lor (J_{2}x \land y)$ witnesses independence for $\class{BA}$ and $\class{SL}$.
\end{proof}

\begin{corollary}\label{destrezza}
    $\class{BA} \lor \class{SL} = \class{BA} \times \class{SL}$.
\end{corollary}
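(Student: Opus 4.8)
The plan is to read the corollary directly off the Gr\"atzer--Lakser--P\l onka theorem. By Theorem~\ref{birignao}, the subvarieties $\class{BA}$ and $\class{SL}$ of $V(\class{BCA})$ are independent, with the term $\varphi(x,y) = \Jdue x \lor (\Jdue x \land y)$ witnessing independence. Theorem~\ref{gratz} then applies verbatim and yields $\class{BA} \lor \class{SL} = \class{BA} \times \class{SL}$, the join being taken in the lattice of subvarieties of $V(\class{BCA})$.

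If one prefers not to black-box Theorem~\ref{gratz}, here is how I would spell things out. The inclusion $\class{BA} \times \class{SL} \subseteq \class{BA} \lor \class{SL}$ is immediate: a product $\B \times \S$ with $\B \in \class{BA}$ and $\S \in \class{SL}$ lies in any variety containing both $\class{BA}$ and $\class{SL}$, since varieties are closed under $P$ and $I$. For the reverse inclusion it suffices to show that $\class{BA} \times \class{SL}$ is a variety: it manifestly contains $\class{BA}$ (take the $\class{SL}$-factor trivial) and $\class{SL}$ (take the $\class{BA}$-factor trivial), hence contains their join. Closure under $P$ is routine reindexing; for closure under $S$ and $H$ the key point is that, given $\A = \B \times \S$ in the class, the projection kernels $\theta_{1}, \theta_{2}$ satisfy $\theta_{1} \cap \theta_{2} = \Delta$ and $\theta_{1} \circ \theta_{2} = \nabla$, with $\A/\theta_{1} \in \class{BA}$ and $\A/\theta_{2} \in \class{SL}$, and this complementary pair of factor congruences is definable uniformly across the class by means of the witnessing term $\varphi$ together with $0$ and $1$; it is therefore preserved under subalgebras and quotients, and one recovers $\A \cong \A/\theta_{1} \times \A/\theta_{2}$.

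I do not foresee any real obstacle: the substance was already settled in Theorem~\ref{birignao} (which itself leans on Lemma~\ref{terrapieno}), and the passage from independence to a direct-product decomposition is precisely the Gr\"atzer--Lakser--P\l onka phenomenon. The only mildly delicate point in the self-contained version is verifying that the pair of factor congruences attached to each algebra is term-definable, and hence stable under $S$ and $H$; but that is exactly what Theorem~\ref{gratz} packages, so invoking it directly is the cleanest route, and the one I would ultimately take.
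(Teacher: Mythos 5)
Your proposal is correct and coincides with the paper's own proof, which likewise derives the corollary immediately from Theorem~\ref{gratz} together with the independence established in Theorem~\ref{birignao}. The optional self-contained unpacking of the Gr\"atzer--Lakser--P\l onka argument is fine as a sketch, but the direct invocation you ultimately favour is exactly what the paper does.
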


\begin{proof}
    This follows from Theorem \ref{gratz} and Theorem \ref{birignao}.
\end{proof}

Given an algebra $\A$ in $V(\class{BCA})$, we single out two notable subsets of its universe: its \emph{open} elements and its \emph{dense} elements. In the P\l onka sum decomposition of its underlying involutive bisemilattice, the former correspond to the elements in the bottom fibre, while the latter correspond to the bottom elements in each fibre:
\begin{eqnarray*}
  O(\A):=\{ a \in A : J_2a=a\}; \\
  D(\A):=\{ a \in A : J_2a=0\} .
\end{eqnarray*}
\begin{lemma}\label{calzettoni}
Let $\A \in V(\class{BCA})$, and let $a \in A$.
    \begin{enumerate}
        \item $a \in O(\A)$ if and only if there exists $b \in A$ s.t. $a = J_2b$.
        \item $a \in D(\A)$ if and only if there exists $b \in A$ s.t. $a = b \land \lnot b$.
        \item $O(\A)$ is the universe of a Boolean subalgebra $\mathbf{O}(\A)$ of $\A$.
        \item $D(\A)$ is the universe of a semilattice $\mathbf{D}(\A)$ which is isomorphic to a quotient of $\A$.
    \end{enumerate}
\end{lemma}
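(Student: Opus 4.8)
The plan is to prove the four items in order, exploiting the P\l onka sum representation of the involutive bisemilattice reduct of $\A$ together with the arithmetical facts collected in Lemma \ref{lemma: aritmetica K} and the description of $O(\A)$ and $D(\A)$ as, respectively, the bottom fibre $A_0$ and the set of fibre-bottoms. For item (1), one direction is immediate from \ref{K11new} and Lemma \ref{lemma: aritmetica K}.(1): if $a=J_2 b$ then $J_2 a = J_2 J_2 b = J_2 b = a$ by Lemma \ref{lemma: aritmetica K}.(3), so $a \in O(\A)$; the converse is trivial, taking $b=a$. For item (2), if $a \in D(\A)$ then $J_2 a = 0$, and by Lemma \ref{lemma: aritmetica K}.(2) we get $a = J_2 a \lor (a \land \lnot a) = a \land \lnot a$, so $b=a$ works; conversely $J_2(b \land \lnot b) = 0$ is exactly \ref{K12new}. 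These two items are routine once the right identities are invoked.

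For item (3), I would show $O(\A)$ is closed under $\land, \lor, \lnot, J_2, 0, 1$ and that the restriction satisfies the Boolean identities. Closure under $J_2$ is item (1); closure under $0,1$ follows since $J_2 0 = 0$ (shown inside the proof of Lemma \ref{lemma: aritmetica K}.(1)) and $J_2 1 = 1$ (from \ref{K9new} and Lemma \ref{lemma: aritmetica K}.(1), or directly: $1 = J_2 1 \lor \lnot J_2 1$ forces $J_2 1 = 1$ in a two-valued-free computation — more cleanly, $J_2 1 \lor 1 = 1$ and $J_2 1 \land 1 = J_2 1 = 1$ by \ref{K11new}). The conceptual point is that $O(\A)$ is precisely the bottom fibre $A_0$ of the P\l onka representation, which is a Boolean algebra and a subalgebra of the $\class{IBSL}$-reduct (the P\l onka operations restricted to a single fibre are the fibre operations, since $i_0 \lor i_0 = i_0$); hence closure under the $\class{IBSL}$ operations is inherited, and we have just checked closure under $J_2$. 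That $\mathbf{O}(\A)$ is Boolean then follows because $A_0$ is a Boolean algebra: for $a \in O(\A)$, $a \land \lnot a = J_2(a \land \lnot a) = 0$ by \ref{K12new}, so $\mathbf{O}(\A) \models x \land \lnot x \approx 0$, which together with the $\class{IBSL}$ axioms axiomatises $\class{BA}$ (cf. the examples after Definition \ref{def: IBSL}).

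For item (4), which I expect to be the main obstacle, the strategy is to exhibit $\mathbf{D}(\A)$ as the image of $\A$ under a homomorphism onto a semilattice. The natural candidate is the map $h(a) := a \land \lnot a$; by item (2) its image is exactly $D(\A)$. I would verify that $h$ is a homomorphism onto the algebra $\mathbf{D}(\A)$ where the operations on $D(\A)$ are: $\lor$ inherited from $\A$, $\land$ set equal to $\lor$, $\lnot$ the identity, $0 = 0^{\A}$ as both bottom and top, and $J_2$ constantly $0$. In P\l onka-sum terms, $h$ collapses each fibre $\A_i$ to its bottom element $0_i$ and acts as the semilattice projection $I \to I$; one checks $h(a \lor b) = 0_{i \lor j} = 0_i \lor 0_j = h(a) \lor h(b)$ using that $p_{i,i\lor j}(0_i) = 0_{i \lor j}$, and $h(\lnot a) = 0_i = h(a)$, and $h(J_2 a) = h(\text{something in } A_0) = 0 = J_2^{\mathbf{D}(\A)}(h(a))$, and $h(a \land b) = h(a) \land^{\mathbf{D}} h(b) = 0_{i \lor j}$ since $a \land b$ lies in fibre $i \lor j$. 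The delicate points are (i) confirming that $\mathbf{D}(\A)$ as just defined really is a semilattice in the similarity type of $V(\class{BCA})$ — i.e. that with $\lnot = \mathrm{id}$, $\land = \lor$, $J_2$ constant, and $0=1$ it satisfies $\mathbf{K_1}$–$\mathbf{K_{12}}$, which is the "semilattice with zero" bullet after Definition \ref{def: IBSL} augmented by $J_2 x \approx 1$ (equivalently $\approx 0$, since $0=1$ there), cf. Lemma \ref{terrapieno}.(2); and (ii) checking $h$ respects $J_2$, which works because $J_2 a \in A_0$ always, so $h(J_2 a) = J_2 a \land \lnot J_2 a = 0$ by \ref{K12new}, matching the constant value of $J_2$ on $\mathbf{D}(\A)$. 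Since $h$ is surjective onto $D(\A)$ and a homomorphism, $\mathbf{D}(\A) \cong \A / \ker h$, giving the claim.
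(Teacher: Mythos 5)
Your proposal is correct, and for items (1), (2) and (4) it is essentially the paper's proof: (1) and (2) are the same computations from Lemma \ref{lemma: aritmetica K} and \ref{K12new}, and in (4) your map $h(a)=a\land\lnot a$ has as kernel exactly the congruence $\theta=\{\langle a,b\rangle: a\land\lnot a=b\land\lnot b\}$ used in the paper; you verify the homomorphism/closure conditions fibrewise in the P\l onka sum, whereas the paper does so via $J_2$-free regular Boolean identities such as $(x\land\lnot x)\lor(y\land\lnot y)\approx(x\lor y)\land\lnot(x\lor y)$ -- a cosmetic difference. The genuine divergence is item (3): the paper establishes closure of $O(\A)$ under $\lnot$ and $\lor$ purely equationally, using Definition \ref{def: algebre di Bochvar}.(10) and (18) (legitimate in $V(\class{BCA})$ by Theorem \ref{th: axiomatic base VBCA}), while you identify $O(\A)$ with the bottom fibre $A_{i_0}$ and inherit the Boolean structure. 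That route works and is arguably more conceptual, but the identification is asserted rather than proved: $O(\A)\subseteq A_{i_0}$ follows from \ref{K9new} (if $\Jdue a=a$ then $a\lor\lnot a=1$, which places $a$ in the bottom fibre), and the converse needs the short observation that for $a\in A_{i_0}$ one has $\Jdue a\in A_{i_0}$, $a\land\Jdue a=a$ (\ref{K11new}) and $a\lor\Jdue a=a$ (Lemma \ref{lemma: aritmetica K}.(1)), whence $\Jdue a=a$ inside the Boolean fibre. Relatedly, your step ``$a\land\lnot a=\Jdue(a\land\lnot a)$'' for $a\in O(\A)$ presupposes the closure you are in the middle of establishing; it is cleaner to derive $x\land\lnot x\approx 0$ on $O(\A)$ directly from \ref{K9new} and De Morgan ($a\land\lnot a=\lnot(a\lor\lnot a)=\lnot 1=0$). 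With these two small repairs your argument is complete; the trade-off is that the paper's version never needs the decomposition, while yours is shorter once the identification $O(\A)=A_{i_0}$ is spelled out.
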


\begin{proof}
    (1) The left-to-right direction is immediate. For the converse direction, by Lemma \ref{lemma: aritmetica K}.(3) we have that $a = J_2b = J_2J_2b = J_2a$.
    
    (2) If $a \in D(\A)$, then by Lemma \ref{lemma: aritmetica K}.(2) $a = J_2a \lor (a \land \lnot a) = 0 \lor (a \land \lnot a) = a \land \lnot a$. Conversely, suppose that $a = b \land \lnot b$. Then by \ref{K12new} $J_2a = J_2(b \land \lnot b) = 0$.

    (3) Clearly $J_21 = 1$. Let $J_2a = a$. Then using Definition \ref{def: algebre di Bochvar}.(10), $J_2\lnot a = J_0a = J_0J_2a = \lnot J_2a = \lnot a$. If $J_2a = a, J_2b = b$, then (having already proved that $O(\A)$ is closed under $\lnot$) by Definition \ref{def: algebre di Bochvar}.(18) we have that $J_2(a \lor b) = (J_2 a \land J_2 b) \lor (J_2 a \land J_2 \lnot b) \lor (J_2 \lnot a \land J_2 b) = (a \land b) \lor (a \land \lnot b) \lor (\lnot a \land b) = a \lor b$. Observe that the last equality holds because we are applying a $J_2$-free regular identity that is valid in all Boolean algebras. Finally, if  $J_2a = a$, then $J_2J_2a = J_2a$. Thus, $O(\A)$ is the universe of a subalgebra $\mathbf{O}(\A)$ of $\A$. This subalgebra is Boolean by Lemma \ref{terrapieno}.(1).

    (4) It is well-known (see e.g. \cite[Ch. 2]{Bonziobook}) that the relation $ \theta = \{\langle a,b\rangle : a \land \lnot a = b \land \lnot b \}$ is a congruence of the involutive bisemilattice reduct of $\A$. Moreover, if $a \theta b$, then $J_2a \theta J_2b$ since by \ref{K4}, \ref{K5}, \ref{K8}, \ref{K9new} $\Jdue a \land \lnot \Jdue a = 0 = \Jdue b \land \lnot \Jdue b$. So $\theta$ is a congruence of $\A$. On the other hand, it is easy to see that $D(\A)$ contains $0$ and is closed w.r.t. $J_2$. To show that it is closed w.r.t. joins, we use Item (2). If $a = x \land \lnot x, b = y \land \lnot y$, then, using a $J_2$-free regular identity valid in all Boolean algebras, $a \lor b = (x \land \lnot x) \lor (y \land \lnot y) = (x \lor y) \land \lnot (x \lor y)$. Letting $\lnot a = a$ for all $a \in D(\A)$, the resulting algebra is a semilattice which is isomorphic to $\A/\theta$ via the map $\phi(a/\theta) = a \land \lnot a$.
\end{proof}

Let $\class{V}$ the subvariety of $V(\class{BCA})$ axiomatised relative to $V(\class{BCA})$ by the single identity $J_2 \lnot x \approx \lnot J_2 x$. Clearly, $\class{BA}, \class{SL} \subseteq \class{V}$. Also, $\WK^e \notin \class{V}$. We will show that $\class{V}$ coincides with the varietal join $\class{BA} \lor \class{SL}$.

\begin{theorem}\label{marciume}
    Every $\A \in \class{V} $ is embeddable into the direct product of some $\B \in \class{BA}$ and some $\C \in \class{SL}$.
\end{theorem}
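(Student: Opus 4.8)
The plan is to exhibit, for each $\A \in \class{V}$, an embedding into $\mathbf{O}(\A) \times \mathbf{D}(\A)$, where $\mathbf{O}(\A)$ is the Boolean subalgebra of open elements and $\mathbf{D}(\A)$ is the semilattice quotient supplied by Lemma \ref{calzettoni}. The natural candidate map is $\eta \colon \A \to \mathbf{O}(\A) \times \mathbf{D}(\A)$ given by $\eta(a) := \langle J_2 a, a \land \lnot a\rangle$; the first component records the ``Boolean shadow'' of $a$ and the second its ``density defect.'' By Lemma \ref{lemma: aritmetica K}.(2) we have $a = J_2 a \lor (a \land \lnot a)$, so $a$ is recoverable from the pair, which is exactly why we expect $\eta$ to be injective.

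First I would check injectivity: if $J_2 a = J_2 b$ and $a \land \lnot a = b \land \lnot b$, then by Lemma \ref{lemma: aritmetica K}.(2), $a = J_2 a \lor (a \land \lnot a) = J_2 b \lor (b \land \lnot b) = b$. Next I would verify that $\eta$ is a homomorphism coordinatewise. The second coordinate is unproblematic: the map $a \mapsto a \land \lnot a$ is the canonical projection onto the semilattice quotient $\A/\theta \cong \mathbf{D}(\A)$ already established in Lemma \ref{calzettoni}.(4), hence a homomorphism. For the first coordinate, I must show $a \mapsto J_2 a$ is a homomorphism onto $\mathbf{O}(\A)$: preservation of $0$ and $1$ is immediate ($J_2 0 = 0$, $J_2 1 = 1$), preservation of $\lnot$ is precisely the defining identity $J_2 \lnot x \approx \lnot J_2 x$ of $\class{V}$ (this is the one place the hypothesis $\A \in \class{V}$ is essential), idempotence of $J_2$ gives $J_2 J_2 a = J_2 a$, and preservation of $\lor$ follows from Definition \ref{def: algebre di Bochvar}.(18) together with $J_2 \lnot x \approx \lnot J_2 x$: expanding $J_2(a \lor b) = (J_2 a \land J_2 b) \lor (J_2 a \land \lnot J_2 b) \lor (\lnot J_2 a \land J_2 b)$ and simplifying this $J_2$-free regular Boolean identity yields $J_2 a \lor J_2 b$, exactly as in the proof of Lemma \ref{calzettoni}.(3). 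Preservation of $\land$ then comes for free via $\land = \lnot(\lnot \cdot \lor \lnot \cdot)$.

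The last thing to confirm is that the codomain is genuinely of the required form: $\mathbf{O}(\A)$ lies in $\class{BA}$ by Lemma \ref{calzettoni}.(3) (it is Boolean by Lemma \ref{terrapieno}.(1)), and $\mathbf{D}(\A)$ lies in $\class{SL}$ by Lemma \ref{calzettoni}.(4). Hence $\eta$ embeds $\A$ into $\mathbf{O}(\A) \times \mathbf{D}(\A)$ with $\mathbf{O}(\A) \in \class{BA}$ and $\mathbf{D}(\A) \in \class{SL}$, as claimed.

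I expect the main obstacle to be the first-coordinate homomorphism check for $\lor$: one must be careful that the simplification of the three-term expression from Definition \ref{def: algebre di Bochvar}.(18) is legitimate, i.e. that it is an instance of a regular Boolean identity and therefore valid in the involutive bisemilattice reduct of $\A$ (recall that only regular identities transfer from Boolean algebras to P\l onka sums thereof). Since the terms involved, $J_2 a$ and $J_2 b$, all live in the bottom fibre $\mathbf{O}(\A)$, which is a genuine Boolean algebra, this simplification is in fact valid there outright, and I would phrase the argument by first observing that all relevant elements are open and then computing inside $\mathbf{O}(\A)$. Everything else is routine.
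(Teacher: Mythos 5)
Your proposal is correct and matches the paper's proof essentially verbatim: the same embedding $a \mapsto \langle J_2 a, a \land \lnot a\rangle$ into $\mathbf{O}(\A) \times \mathbf{D}(\A)$, injectivity via Lemma \ref{lemma: aritmetica K}.(2), and homomorphism checks resting on $J_2\lnot x \approx \lnot J_2 x$, Definition \ref{def: algebre di Bochvar}.(18), and regular Boolean identities (or, equivalently, computation inside the Boolean subalgebra of open elements). The only cosmetic difference is that you verify the two coordinates as separate homomorphisms while the paper checks the paired map operation by operation.
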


\begin{proof}
    We will embed $\A$ into $\mathbf{O}(\A) \times \mathbf{D}(\A)$, whence the result follows from Lemmas \ref{terrapieno} and \ref{calzettoni}.(3)-(4). Let $a \in A$. We define $\varphi(a) := \langle J_2a,a \land \lnot a \rangle$. By Lemma \ref{calzettoni}.(1)-(2), $\varphi$ is well-defined. $\varphi$ is also injective. Indeed, let $\varphi(a) = \varphi(b)$. Then $J_2a=J_2b$ and $ a \land \lnot a =  b \land \lnot b$, whence by Lemma \ref{lemma: aritmetica K}.(2) $a = J_2a \lor (a \land \lnot a) = J_2b \lor (b \land \lnot b) = b$.

    We show that $\varphi$ preserves the operations. Let $\B := \mathbf{O}(\A), \C := \mathbf{D}(\A)$ and $a,b \in A$.
    \begin{itemize}
        \item $\varphi(1^{\A}) = \langle J_21, 1 \land \lnot 1\rangle = \langle 1^{\B}, 1^{\C}\rangle = 1^{\B \times \C}$.
        \item $\varphi(\lnot^{\A}a) = \langle J_2 \lnot a, \lnot a \land \lnot \lnot a \rangle = \langle \lnot^{\B} J_2 a, a \land \lnot a \rangle = \lnot^{\B \times \C} \langle J_2 a, a \land \lnot a \rangle = \lnot^{\B \times \C} \varphi (a)$.
        \item $\varphi(a \land^{\A}b) = \langle J_2 (a \land b), (a \land b) \land \lnot (a \land b) \rangle = \langle J_2 a \land^{\B} J_2 b, (a \land \lnot a) \land^{\C} (b \land \lnot b) \rangle = \langle J_2 a, a \land \lnot a \rangle \land^{\B \times \C} \langle J_2 b, b \land \lnot b \rangle = \varphi(a) \land^{\B \times \C} \varphi(b)$.
        \item $\varphi(J_2^{\A}a) = \langle J_2J_2a, J_2a \land \lnot J_2a \rangle = \langle J_2^{\B}J_2a, J_2^{\C}(a \land \lnot a) \rangle = J_2^{\B \times \C} \langle J_2 a, a \land \lnot a \rangle = J_2^{\B \times \C} \varphi(a).$
    \end{itemize}
    Observe that we have crucially utilised the identity $J_2 \lnot x \approx \lnot J_2 x$, as well as some $J_2$-free regular identities valid in all Boolean algebras.
\end{proof}

\begin{corollary}
    The variety $\class{V}$ coincides with the varietal join $\class{BA} \lor \class{SL}$ in the lattice of subvarieties of $V(\class{BCA})$.
\end{corollary}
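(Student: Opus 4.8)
The plan is to prove the two inclusions $\class{BA} \lor \class{SL} \subseteq \class{V}$ and $\class{V} \subseteq \class{BA} \lor \class{SL}$, both of which are short given the results already established. For the first, I would note that $\class{V}$ is by construction a variety and that, as already observed, both $\class{BA}$ and $\class{SL}$ satisfy the defining identity $J_2 \lnot x \approx \lnot J_2 x$ of $\class{V}$: in $\class{BA}$ because $J_2$ is interpreted as the identity, and in $\class{SL}$ because there $J_2 x \approx 1 \approx 0$ and $x \approx \lnot x$. Hence $\class{BA}, \class{SL} \subseteq \class{V}$, and since $\class{BA} \lor \class{SL}$ is by definition the least subvariety of $V(\class{BCA})$ containing both, it is contained in $\class{V}$.

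For the reverse inclusion, I would invoke Corollary \ref{destrezza}, according to which $\class{BA} \lor \class{SL} = \class{BA} \times \class{SL}$; in particular this class is a variety and hence closed under the operator $S$. Given $\A \in \class{V}$, Theorem \ref{marciume} supplies an embedding of $\A$ into a direct product $\B \times \C$ with $\B \in \class{BA}$ and $\C \in \class{SL}$. By the definition of the direct product of varieties, $\B \times \C \in \class{BA} \times \class{SL}$, so $\A \in S(\class{BA} \times \class{SL}) = \class{BA} \times \class{SL} = \class{BA} \lor \class{SL}$. Combining the two inclusions yields $\class{V} = \class{BA} \lor \class{SL}$.

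I do not anticipate any real obstacle: all the substance lives in the preceding results, especially in the explicit embedding $\varphi(a) := \langle J_2 a, a \land \lnot a\rangle$ of $\A$ into $\mathbf{O}(\A) \times \mathbf{D}(\A)$ built in the proof of Theorem \ref{marciume}, together with Lemma \ref{terrapieno} and Corollary \ref{destrezza}; this corollary is essentially just the act of packaging them. The only point meriting an explicit line is that $\class{BA} \times \class{SL}$ is closed under subalgebras, which is immediate from its being a variety.
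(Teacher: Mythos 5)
Your proposal is correct and follows essentially the same route as the paper: one direction via Theorem \ref{marciume} (the embedding into a product of a Boolean algebra and a semilattice) together with Corollary \ref{destrezza}, and the other via the observation that the axioms of $\class{V}$ hold in both $\class{BA}$ and $\class{SL}$. The paper merely packages this as a single chain of inclusions $\class{V} \subseteq ISP(\class{BA} \cup \class{SL}) \subseteq \class{BA} \lor \class{SL} = \class{BA} \times \class{SL} \subseteq \class{V}$, which is the same argument in different notation.
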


\begin{proof}
    Using Corollary \ref{destrezza} and Theorem \ref{marciume}, together with the fact that all the identities axiomatising $\class{V}$ are valid both in Boolean algebras and in semilattices, we obtain the following chain of inclusions:
    \[
    \class{V} \subseteq ISP(\class{BA} \cup \class{SL}) \subseteq \class{BA} \lor \class{SL} = \class{BA} \times \class{SL} \subseteq \class{V}.
    \]
\end{proof}

\section*{Acknowledgments}

We gratefully acknowledge the support of the following funding sources:
\begin{itemize}
    \item the University of Cagliari, through the StartUp project ``GraphNet' (CUP: F25F21002720001);
    \item the European Union (Horizon Europe Research and Innovation Programme), through the MSCA-RISE action PLEXUS (Grant Agreement no 101086295);
    \item the Italian Ministry of Education, University and Research, through the PRIN 2022 project DeKLA (``Developing Kleene Logics and their Applications'', project code: 2022SM4XC8), the PRIN 2022 project “The variety of grounding” (project code: 2022NTCHYF), and the PRIN Pnrr project ``Quantum Models for Logic, Computation and Natural Processes (Qm4Np)'' (project code: P2022A52CR);
    \item the Fondazione di Sardegna, through the project MAPS (grant number F73C23001550007);
    \item the INDAM GNSAGA (Gruppo Nazionale per le Strutture Algebriche, Geometriche e loro Applicazioni);
    \item the CARIPARO Foundation, through the excellence project (2020-2024): “Polarization of irrational collective beliefs in post-truth societies”.
\end{itemize}
We thank Antonio Ledda, Gavin St. John, Sara Ugolini, Nicol\'o Zamperlin for discussing with us the subject matter of this paper.

\noindent


\end{document}